\newcommand{\Qp}{\mathbf{Q}_p}
\newcommand{\Cp}{\mathbf{C}_p}
\newcommand{\Zp}{\mathbf{Z}_p}
\newcommand{\FF}{\mathbf{F}}
\newcommand{\ZZ}{\mathbf{Z}}
\newcommand{\OO}{\mathcal{O}}
\newcommand{\MM}{\mathfrak{m}}
\newcommand{\Qpbar}{\overline{\mathbf{Q}}_p}
\newcommand{\BO}{\mathrm{O}}
\renewcommand{\phi}{\varphi}
\renewcommand{\projlim}{\varprojlim}
\renewcommand{\geq}{\geqslant}
\renewcommand{\leq}{\leqslant} 
\renewcommand{\hat}{\widehat}
\renewcommand{\tilde}{\widetilde}
\newcommand{\calR}{\mathcal{R}}
\newcommand{\calE}{\mathcal{E}}
\newcommand{\calEdag}{\mathcal{E}^\dagger}
\newcommand{\calO}{\mathcal{O}}
\newcommand{\Gal}{\mathrm{Gal}}
\newcommand{\Hom}{\mathrm{Hom}}
\newcommand{\ind}{\mathrm{ind}}
\newcommand{\dR}{\mathrm{dR}}
\newcommand{\cycl}{\mathrm{cyc}}
\newcommand{\End}{\mathrm{End}}
\newcommand{\Frac}{\mathrm{Frac}}
\newcommand{\GL}{\mathrm{GL}}
\newcommand{\Id}{\mathrm{Id}}
\newcommand{\Fil}{\mathrm{Fil}}
\newcommand{\Nm}{\mathrm{N}}
\newcommand{\vp}{\mathrm{val}_p}
\newcommand{\ve}{\mathrm{val}_{\mathbf{E}}}
\newcommand{\unr}{\mathrm{unr}}
\newcommand{\cyc}{\mathrm{cyc}}
\newcommand{\an}{\mathrm{an}}
\newcommand{\bt}{\widetilde{\mathbf{B}}}
\newcommand{\atplus}{\widetilde{\mathbf{A}}^+}
\newcommand{\btplus}{\widetilde{\mathbf{B}}^+}
\newcommand{\etplus}{\widetilde{\mathbf{E}}^+}
\newcommand{\bdr}{\mathbf{B}_{\mathrm{dR}}}  
\newcommand{\btrigplus}{\widetilde{\mathbf{B}}^+_{\mathrm{rig}}} 
\newcommand{\btrig}[2]{\widetilde{\mathbf{B}}^{\dagger #1}_{\mathrm{rig} #2}} 
\newcommand{\atdag}[1]{\widetilde{\mathbf{A}}^{\dagger #1}}
\newcommand{\dcris}{\mathrm{D}_{\mathrm{cris}}}
\newcommand{\dsen}{\mathrm{D}_{\mathrm{Sen}}}
\newcommand{\dfont}{\mathrm{D}}
\renewcommand{\ddag}[1]{\mathrm{D}^{\dagger #1}}
\newcommand{\mfont}{\mathrm{M}}
\newcommand{\dcroc}[1]{[\![ #1 ]\!]}
\newcommand{\LT}{\operatorname{LT}}
\newcommand{\Gm}{\mathbf{G}_{\mathrm{m}}}
\newcommand{\chilt}{\chi_{\LT}}
\theoremstyle{plain}
\newtheorem*{theoA}{Theorem A}
\newtheorem*{theoB}{Theorem B}
\newtheorem*{theoC}{Theorem C}
\theoremstyle{remark}
\newtheorem*{remaA}{Remark}
\author{Laurent Berger}
\address{UMPA ENS de Lyon \\ UMR 5669 du CNRS \\ IUF}
\email{laurent.berger@ens-lyon.fr}
\urladdr{perso.ens-lyon.fr/laurent.berger/}
\date{\today}
\title[Multivariable $(\varphi,\Gamma)$-modules]{Multivariable Lubin-Tate $(\varphi,\Gamma)$-modules \\Êand filtered $\phi$-modules}
\subjclass{11F; 11S; 14G}
\keywords{$(\phi,\Gamma)$-module; Lubin-Tate group; filtered $\phi$-module; crystalline representation; $p$-adic period; Fontaine theory; reflexive sheaf}
\thanks{This research is partially supported by the ANR grant Th\'eHopaD (Th\'eorie de Hodge $p$-adique et D\'evelop\-pements) ANR-11-BS01-005}
\begin{document}

\begin{abstract}
We define some rings of power series in several variables, that are attached to a Lubin-Tate formal module. We then give some examples of $(\varphi,\Gamma)$-modules over those rings. They are the global sections of some reflexive sheaves on the $p$-adic open unit polydisk, that are constructed from a filtered $\phi$-module using a modification process. We prove that we obtain every crystalline $(\varphi,\Gamma)$-module over those rings in this way.
\end{abstract}

\maketitle

\tableofcontents

\setlength{\baselineskip}{18pt}

\section*{Introduction}\label{intro}

Let $F$ be the unramified extension of $\Qp$ of degree $h$ and let $q=p^h$ so that the residue field of $\OO_F$ is $\FF_q$. We fix an embedding $F \subset \Qpbar$ so that if $\sigma : F \to F$ denotes the absolute Frobenius map, which lifts $x \mapsto x^p$ on $\FF_q$, then the $h$ embeddings of $F$ into $\Qpbar$ are given by $\Id,\sigma,\hdots,\sigma^{h-1}$. The symbol $\phi_q$ denotes a $\sigma^h$-semilinear Frobenius map. If $K$ is a subfield of $\Qpbar$, then let $G_K = \Gal(\Qpbar/K)$.

The goal of this article is to present a first attempt to construct some ``multivariable Lubin-Tate $(\phi,\Gamma)$-modules'', that is some $(\phi_q,\Gamma_F)$-modules over rings of power series in $h$ variables, on which $\Gamma_F =\OO_F^\times$ acts by a formula arising from a Lubin-Tate formal $\OO_F$-module. A construction of such $(\phi_q,\Gamma_F)$-modules, but ``in one variable'', was carried out by Kisin and Ren in \cite{KR09}: they prove that in certain cases, the $(\phi_q,\Gamma_F)$-modules arising from Fontaine's standard construction of \cite{F90} are overconvergent. In order to do so, Kisin and Ren adapt the construction of $(\phi,\Gamma)$-modules attached to filtered $(\phi,N)$-modules given in \cite{LB10} to their setting, which allows them to attach a $(\phi_q,\Gamma_F)$-module in one variable to a filtered $\phi_q$-module. They then point out in the introduction of \cite{KR09} that ``it seems likely that in order to obtain a classification valid for any crystalline $G_K$-representation one needs to consider higher dimensional subrings of $W(\mathrm{Fr}\,R)$, constructed using the periods of all the conjugates of [the Lubin-Tate group]''. 

The motivation for these computations is the hope that we can construct some representations of the Borel subgroup of $\GL_2(F)$, for example using the recipe given by Colmez in \cite{PC10}, that would shed some light on the $p$-adic local Langlands correspondence for $\GL_2(F)$ (see \cite{BR10}). Theorems A, B and C below are a very first step in this direction, but remain insufficient. In particular, the ``$p$-adic Fourier theory'' of Schneider and Teitelbaum (see \cite{ST01}) will very likely play an important role in the sequel.

We now describe our results in more detail. Let $\LT_h$ be the Lubin-Tate formal $\OO_F$-module for which multiplication by $p$ is given by $[p](T) =pT+T^q$. We denote by $[a](T)$ the element of $\OO_F \dcroc{T}$ that gives the action of $a \in \OO_F$ on $\LT_h$. We consider two rings $\calR^+(Y)$ and $\calR(Y)$ of power series in the $h$ variables $Y_0,\hdots,Y_{h-1}$, with coefficients in $F$. The ring $\calR^+(Y)$ is the ring of power series that converge on the open unit polydisk, and $\calR(Y)$ is the Robba ring that corresponds to it, by adapting Schneider's construction given in the appendix of \cite{SZ}. The action of the group $\OO_F^\times$ on those rings is given by the formula $a(Y_j) = [\sigma^j(a)](Y_j)$, and the Frobenius map $\phi_q$ is given by $\phi_q(Y_j) = [p](Y_j)$.

The construction of $p$-adic periods for Lubin-Tate groups gives rise to a map $\calR^+(Y) \to \btrigplus$, where $\btrigplus$ is the Fr\'echet completion of $\btplus = W(\etplus)[1/p]$, and we prove (corollary \ref{iotaninj}) that this map is in fact injective (remark: if $\tilde{\calR}^+(Y)$ denotes the completion of the perfection of $\calR^+(Y)$, then the map above extends to a map $\tilde{\calR}^+(Y) \to \btrigplus$ but note that, by the theory of the field of norms of \cite{FW} and \cite{WCN}, this latter map is not injective anymore if $h \geq 2$. This has prevented us from studying \'etale $\phi_q$-modules using Kedlaya's methods, so such considerations are absent from this article).

Let $D$ be a finite dimensional $F$-vector space, endowed with an $F$-linear Frobenius map $\phi_q : D \to D$, and an action of $G_F$ on $D$ that factors through $\Gamma_F$ and commutes with $\phi_q$. For each $0 \leq j \leq h-1$, let $\Fil_j^\bullet$ be a filtration on $D$ that is stable under $\Gamma_F$. 

For example, if $V$ is an $F$-linear crystalline representation of $G_F$ of dimension $d$, then $\dcris(V)$ is a free $F \otimes_{\Qp} F$-module of rank $d$, and we have 
\[ \dcris(V) = D \oplus \phi(D) \oplus \cdots \oplus \phi^{h-1}(D), \]
according to the decomposition of $F \otimes_{\Qp} F$ as $\prod_{\sigma^i : F \to F} F$. Each $\phi^j(D)$ has the filtration induced from $\dcris(V)$, and we set $\Fil_j^k D = \phi^{-j} (\Fil^k  \dcris(V) \cap \phi^j(D))$. 

The composite of the map $\calR^+(Y) \to \btrigplus$ with the map $\phi^{-k} :  \btrigplus \to  \btrigplus$ gives rise to a map $\iota_k : \calR^+(Y) \to \btrigplus$. Let $\log_{\LT}(T)$ be the logarithm of $\LT_h$, and let $\lambda_j = \log_{\LT}(Y_j)/Y_j$ and $\lambda = \prod_{j=0}^{h-1} \lambda_j$ (note that the image of $\prod_{j=0}^{h-1} \log_{\LT}(Y_j)$ in $\btrigplus$ is some $\Qp$-multiple of $t=\log(1+X)$, so that $\lambda$ is an analogue of $t/X$). Define
\[ \mfont^+(D) = \{ y \in \calR^+(Y)[1/\lambda] \otimes_F D,\ \iota_k(y) \in \Fil_{-k}^0(\bdr \otimes_F^{\sigma^{-k}} D) \text{ for all } k \geq h\}. \]
The ring $\calR^+(Y)$ is a Fr\'echet-Stein algebra in the sense of \cite{ST03}, and we therefore have the notion of coadmissible $\calR^+(Y)$-modules, which are the global sections of coherent sheaves on the open unit polydisk.

\begin{theoA}
The module $\mfont^+(D)$ is a reflexive coadmissible $\calR^+(Y)$-module, for all $0 \leq j \leq h-1$, $\mfont^+(D)[\lambda_j/\lambda]$ is a free $\calR^+(Y)[\lambda_j/\lambda]$-module of rank $d$, and we have $\mfont^+(D) = \cap_{j=0}^{h-1} \mfont^+(D)[\lambda_j/\lambda]$.
\end{theoA}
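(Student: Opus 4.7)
The plan is to prove the three assertions in the order (b), (c), (a). The geometric picture is that the open unit polydisk is covered, up to a codimension $\geq 2$ subset, by the $h$ open subspaces $U_j$ corresponding to the rings $\calR^+(Y)[\lambda_j/\lambda] = \calR^+(Y)[1/\prod_{i\neq j}\lambda_i]$, so it suffices to describe $\mfont^+(D)$ on each $U_j$ and to reassemble the global module as the intersection.

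For (b), fix $j$. The key preliminary fact, extracted from the construction of the periods together with the relation $\phi_q(\log_{\LT}(Y_i)) = p\log_{\LT}(Y_i)$, is that $\iota_k(\lambda_i)$ is a uniformizer of $\bdr^+$ precisely when $k \equiv i \pmod h$ and a unit in $\bdr^+$ otherwise. Consequently, once $\prod_{i\neq j}\lambda_i$ is inverted, only the conditions indexed by $k \geq h$ with $k \equiv j \pmod h$ remain meaningful, and they are supported on the principal Cartier divisor cut out by $\lambda_j$ on $U_j$. The module $\mfont^+(D)[\lambda_j/\lambda]$ is then the modification of the free module $\calR^+(Y)[\lambda_j/\lambda] \otimes_F D$ along this divisor by the $\Gamma_F$-stable exhaustive decreasing filtration $\Fil_j^\bullet$, compatibly with $\phi_q$. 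This is exactly the single-variable modification problem solved in \cite{LB10} (carried out in the Lubin-Tate setting of \cite{KR09}), and it produces a free module of rank $d$.

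For (c), the inclusion $\mfont^+(D) \subset \bigcap_j \mfont^+(D)[\lambda_j/\lambda]$ is immediate. Conversely let $y$ lie in the intersection; as discussed above, membership in $\mfont^+(D)[\lambda_j/\lambda]$ encodes the filtration conditions at $\iota_k$ for $k \equiv j \pmod h$, $k \geq h$, and running over $j$ we see that $y$ satisfies the conditions at every $k \geq h$, so $y \in \mfont^+(D)$. For (a), $\calR^+(Y)$ is a Fr\'echet-Stein algebra by the construction adapted from \cite{SZ} in the sense of \cite{ST03}. Coadmissibility of $\mfont^+(D)$ then follows because each $\mfont^+(D)[\lambda_j/\lambda]$ is finitely generated --- indeed free by (b) --- and $\mfont^+(D)$ is their intersection by (c). Reflexivity follows from the same data: each $\mfont^+(D)[\lambda_j/\lambda]$ is free of rank $d$ hence reflexive, and the complement of $\bigcup_j U_j$ in the polydisk, namely the locus where at least two of the $\lambda_i$ vanish simultaneously, has codimension $\geq 2$; on a regular rigid space the intersection of reflexive coherent sheaves over such a cover is reflexive.

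The main obstacle is (b). First, the analysis of $\iota_k(\lambda_i)$, and the statement that each residue class of $k$ modulo $h$ sees the divisor of exactly one $\lambda_i$, requires a precise study of the Frobenius action on the Lubin-Tate periods $\iota_0(\log_{\LT}(Y_i))$. Second, the modification step must be carried out in the base $\calR^+(Y)[\lambda_j/\lambda]$ with its remaining $h-1$ variables: the filtration $\Fil_j^\bullet$ must interact with $\phi_q$ coherently over all the $[p^\infty]$-torsion points of $\lambda_j$, and the resulting lattice has to be verified to be free of rank $d$ over this higher-dimensional base. Both steps require reworking the single-variable techniques of \cite{LB10} in the geometry provided by the Lubin-Tate periods.
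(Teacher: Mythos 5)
Your geometric outline matches the paper's strategy (localize at $f_j=\lambda/\lambda_j$, reduce to a one-variable problem, recover $\mfont^+(D)$ as the intersection, get reflexivity from the finite intersection of free modules), but the central step --- your (b) --- is asserted rather than proved, and you say so yourself (``requires reworking the single-variable techniques''). That is precisely where the substance of the theorem lies. The missing idea is that $\mfont^+(D)[1/f_j]$ is obtained by \emph{base change from a genuinely one-variable lattice}: one defines $N_j^+ \subset \calR^+(Y_j)[1/\lambda_j]\otimes_F D$ by the conditions $\phi_q^{-k}\phi^{-j}(y)\in W_{\dR}^{+,-j}(D)$, proves it free of rank $d$ over $\calR^+(Y_j)$ by the classical argument of \cite{LB10}/\cite{KR09} (proposition \ref{mdonedim}), and then shows $\mfont^+(D)[1/f_j]=\calR^+(Y)[1/f_j]\otimes_{\calR^+(Y_j)}N_j^+$. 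This identification is not a formality: on $U_j$ the conditions at $\iota_k$, $k\equiv j \bmod h$, are lattice conditions in $\bdr$ imposed through specific embeddings of the $h$-variable ring, and one must show they are controlled by the $Y_j$-direction alone. In the paper this is proposition \ref{mintmj} and lemma \ref{mnten}, proved at each finite level $\calR^{[0;s_n]}(Y)$ by Weierstrass division in $Y_j$ (proposition \ref{weierdiv}), the isomorphism $\bdr^+\otimes_{\calR^+(Y_j)}^{\phi_q^{-k}\phi^{-j}}N_j^+\simeq W_{\dR}^{+,-j}(D)$, and theorem \ref{embdr} to rule out poles along the zeros of $Q_1(Y_j),\hdots,Q_m(Y_j)$. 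Without this, ``the modification of a free module along the divisor of $\lambda_j$ by the filtration'' over the $(h$-dimensional) base $\calR^+(Y)[1/f_j]$ is not known to be free, nor even finitely generated --- freeness is exactly what the base-change description buys, and a direct appeal to the one-variable result does not apply to a divisor that is an $(h-1)$-dimensional polydisk over each torsion point.

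There is a second, related gap in (a). Coadmissibility does not follow from ``$\mfont^+(D)$ is an intersection of free $\calR^+(Y)[1/f_j]$-modules'': the rings $\calR^+(Y)[1/f_j]$ are not the structure rings of the Fr\'echet--Stein presentation, and being an intersection of finitely generated modules over them says nothing a priori about the sheaf structure over $\{\calR^{[0;s_n]}(Y)\}_n$. One must exhibit the coherent sheaf $\{\mfont(D)^{[0;s_n]}\}_{n\in S}$ and verify the base-change compatibility $\calR^{[0;s_m]}(Y)\otimes_{\calR^{[0;s_n]}(Y)}\mfont(D)^{[0;s_n]}=\mfont(D)^{[0;s_m]}$, i.e.\ that base change commutes with the finite intersection $\cap_j M_j^{[0;s_n]}$; this is the displayed computation in the paper's proof of theorem \ref{mprcs}, and it again rests on proposition \ref{mintmj}. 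Once that finite-level structure is in place, reflexivity is immediate from the fact that each $\mfont(D)^{[0;s_n]}$ is a finite intersection of free modules inside a common module (proposition 6 of VII.4.2 of \cite{AC61}); your codimension-$2$ pushforward argument could be made to work on a smooth Stein space, but it needs coherence of the extension, which is exactly what has been skipped. By contrast, your direction of (c) that an element of $\cap_j\mfont^+(D)[1/f_j]$ satisfies all the conditions $\iota_k(y)\in W_{\dR}^{+,-k}(D)$ is fine, granting that $\iota_k(f_j)$ is a unit of $\bdr^+$ for $k\equiv j\bmod h$, which follows from theorem \ref{embdr}.
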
 

The definition of $\mfont^+(D)$ is analogous to the one given in \cite{LB10}, \cite{KR09} and similar  articles. When $h=1$, the proof of theorem A relies on the fact that $\mfont^+(D)$ can be seen as a vector bundle on the open unit disk. Our proof of theorem A relies on the one dimensional case, and on the interpretation of $\mfont^+(D)$ as the global sections of a coherent sheaf on the open unit polydisk. 

\begin{remaA} If $h \leq 2$, then $\calR^+(Y)$ is of dimension $\leq 2$ and one can then prove that $\mfont^+(D)$, being reflexive, is actually free of rank $d$ (see remark \ref{h2free}). If $h \geq 3$, I do not know whether $\mfont^+(D)$ is free of rank $d$ in general, nor even if it is finitely generated.
\end{remaA}

Let $\mfont(D) = \calR(Y) \otimes_{\calR^+(Y)} \mfont^+(D)$, so that $\mfont(D)$ is a $(\phi_q,\Gamma_F)$-module over the multivariable Robba ring $\calR(Y)$ (see definition \ref{defpgm}).

\begin{theoB}
If $V$ is an $F$-linear crystalline representation of $G_F$, and if $D$ arises from $\dcris(V)$ as above, then there is a natural map $\btrig{}{} \otimes_{\calR(Y)} \mfont(D) \to \btrig{}{} \otimes_F V$, and this map is an isomorphism.
\end{theoB}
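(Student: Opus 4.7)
The plan is to construct $\alpha$ explicitly and then verify it is an isomorphism. The map $\iota_0 : \calR^+(Y) \to \btrigplus$ extends naturally to $\calR(Y) \to \btrig{}{}$, and further to $\calR(Y)[1/\lambda] \to \btrig{}{}[1/t]$ (since $\iota_0(\lambda)$ equals $t$ up to a factor that is invertible on the Robba side). Combining the inclusion $\mfont(D) \subset \calR(Y)[1/\lambda] \otimes_F D$ with $D \subset \dcris(V)$ and the crystalline comparison isomorphism $\btrig{}{}[1/t] \otimes_F \dcris(V) \simeq \btrig{}{}[1/t] \otimes_F V$ yields, after base change to $\btrig{}{}$, a $(\phi_q, \Gamma_F)$-equivariant $\btrig{}{}$-linear map
\[
\alpha : \btrig{}{} \otimes_{\calR(Y)} \mfont(D) \longrightarrow \btrig{}{}[1/t] \otimes_F V.
\]

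The central step is to show that the image of $\alpha$ actually lies in $\btrig{}{} \otimes_F V$, i.e.\ that no denominators in $t$ appear. This is where the defining filtration conditions $\iota_k(y) \in \Fil_{-k}^0(\bdr \otimes_F^{\sigma^{-k}} D)$ for $k \geq h$ come into play: since $\iota_k = \phi^{-k} \circ \iota_0$, these conditions translate, under the crystalline comparison, into statements about the various Frobenius-twisted specializations of $\alpha(y)$ lying in $\bdr^+ \otimes_F V$. Combined with the $\phi_q$-stability of $\btrig{}{} \otimes_F V \subset \btrig{}{}[1/t] \otimes_F V$, this forces $\alpha(y) \in \btrig{}{} \otimes_F V$. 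This is the multivariable analogue of the absence-of-denominators arguments of \cite{LB10} and \cite{KR09}, and it is the main technical obstacle: one must reconcile the multiple filtration conditions indexed by $k \geq h$ with the single requirement of ``no $t$-poles'', using the Lubin-Tate structure in an essential way.

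For the isomorphism property, note first that $\alpha[1/t]$ is by construction the crystalline comparison identification $\btrig{}{}[1/t] \otimes_F D \simeq \btrig{}{}[1/t] \otimes_F V$, so $\alpha$ is an isomorphism after inverting $t$. In particular, source and target are $\btrig{}{}$-lattices of the same generic rank $d$ in $\btrig{}{}[1/t] \otimes_F V$. By Theorem A, the decomposition $\mfont^+(D) = \bigcap_j \mfont^+(D)[\lambda_j/\lambda]$ with each summand free of rank $d$ makes the source a $\phi_q$-stable $\btrig{}{}$-lattice. Since $V$ is crystalline, such a lattice compatible with the filtered $\phi$-module data is uniquely determined by $\dcris(V)$ (reducing, via $\iota_0$, to the classical one-variable statement of Berger and Kisin-Ren), and must coincide with $\btrig{}{} \otimes_F V$. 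Hence $\alpha$ is an isomorphism.
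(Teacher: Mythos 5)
Your construction of the map and the ``no denominators'' step follow the paper's line: the conditions $\phi^{-n}(y) \in \bdr^+ \otimes_F^{\sigma^{-n}} V$ for all $n \geq n(r)$ do force $\alpha(y) \in \btrig{,r}{} \otimes_F V$, and this is exactly lemma \ref{kot} (quoted from earlier work), so that part is fine even though you only sketch it. The genuine gap is in your final step. Knowing that the source is a $\phi_q$-stable $\btrig{,r}{}$-module of generic rank $d$, contained in $\btrig{,r}{} \otimes_F V$ and equal to it after inverting $t$, does \emph{not} pin it down: for instance $t \cdot \btrig{,r}{} \otimes_F V$ satisfies all of these properties as well (note $\phi_q(t)=qt$ up to the normalization, so $t$-multiples of the lattice are still $\phi_q$-stable). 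Your appeal to ``such a lattice compatible with the filtered $\phi$-module data is uniquely determined by $\dcris(V)$, reducing via $\iota_0$ to the one-variable statement'' is not an available theorem here --- it is essentially the statement being proved, and the embedding $\calR(Y) \to \btrig{}{}$ does not reduce the multivariable module to a one-variable one. What is actually needed, and what the paper supplies, is the \emph{surjectivity} of the specializations: proposition \ref{surjlatt} shows that $\bdr^+ \otimes_{\iota_n(\calR^+(Y))} \iota_n(\mfont^+(D)) \to \bdr^+ \otimes_F^{\phi^{-n}} V$ is an isomorphism (not merely an inclusion) for every $n \geq n(r)$, so the two lattices have the same localizations at all these points; one then concludes equality by a determinant argument (the determinant generates a $G_F$-stable ideal of $\btrig{,r}{}$ whose images under all $\iota_n$ are units of $\bdr^+$, hence it is a unit, as in lemma 2.2.2 of \cite{LB8}). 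Your proposal never uses the ``onto'' half of the filtration data, so it cannot exclude lattices strictly smaller than $\btrig{,r}{} \otimes_F V$.

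A secondary, smaller gap: you assert that the intersection decomposition of Theorem A ``makes the source a lattice''. Tensor product does not commute with intersections, and the paper needs an actual argument here: each $\btrig{,r}{}[1/f_j] \otimes_{\calR^+(Y)} \mfont^+(D)$ is free of rank $d$, the $f_j$ have no common zero and hence generate the unit ideal of the B\'ezout ring $\btrig{,r}{}$, so the module is projective of rank $d$ and therefore free. You should include this (or an equivalent) step rather than deducing freeness from reflexivity plus the intersection formula alone.
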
 

If $\mfont$ is a $(\phi_q,\Gamma_F)$-module over $\calR(Y)$, then we set $\dcris(\mfont) =  (\calR(Y)[1/t] \otimes_{\calR(Y)} \mfont)^{\Gamma_F}$, and we say that $\mfont$ is crystalline if $\mfont[\lambda_j/\lambda]$ is a free $\calR(Y)[\lambda_j/\lambda]$-module of some rank $d$ for all $j$, $\mfont = \cap_{j=0}^{h-1} \mfont[\lambda_j/\lambda]$, and $\dim \dcris(\mfont) = d$. For example, if $D$ is a filtered $\phi_q$-module with $h$ filtrations $\Fil^\bullet_j$ as above, on which the action of $G_F$ is trivial, then $\mfont(D)$ is a crystalline $(\phi_q,\Gamma_F)$-module.

\begin{theoC}
The functors $\mfont \mapsto \dcris(\mfont)$ and $D \mapsto \mfont(D)$, between the category of crystalline $(\phi_q,\Gamma_F)$-modules over $\calR(Y)$ and the category of $\phi_q$-modules with $h$ filtrations, are mutually inverse.
\end{theoC}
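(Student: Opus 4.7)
The plan is to verify directly that the two compositions of functors are naturally isomorphic to the identity. For $D \mapsto \mfont(D) \mapsto \dcris(\mfont(D))$: since $t$ equals a $\Qp^\times$-multiple of $(\prod_j Y_j)\lambda$ in $\calR^+(Y)$, inverting $t$ automatically inverts $\lambda$, and Theorem~A yields
\[ \mfont(D)[1/t] = \calR(Y)[1/t] \otimes_F D, \]
with $\Gamma_F$ acting only through the first factor because the $G_F$-action on $D$ is trivial. Computing $\dcris(\mfont(D))$ is thus reduced to the key assertion $(\calR(Y)[1/t])^{\Gamma_F} = F$, which I would prove by examining the multigraded pieces of an invariant series: the Lubin-Tate formula $a(Y_j) = [\sigma^j(a)](Y_j)$ scales the monomial $Y_0^{n_0}\cdots Y_{h-1}^{n_{h-1}}$ by $\prod_j \sigma^j(a)^{n_j}$ to leading order, and the $\Qp$-linear independence of the characters $\sigma^0,\ldots,\sigma^{h-1}$ of $\OO_F^\times$ forces all but the constant monomial to vanish. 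This identifies $\dcris(\mfont(D))$ with $D$ as a $\phi_q$-module, and the $h$ filtrations on this copy of $D$ are recovered by pulling back the standard $\bdr$-filtrations along the maps $\iota_{-j}$ for $0 \leq j \leq h-1$; the defining conditions $\iota_k(y) \in \Fil^0_{-k}(\bdr \otimes_F^{\sigma^{-k}} D)$ of $\mfont^+(D)$ are tailored precisely so that this pullback returns the original $\Fil_j^\bullet$.

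Conversely, let $\mfont$ be a crystalline $(\phi_q,\Gamma_F)$-module over $\calR(Y)$ and set $D = \dcris(\mfont)$. The crystalline hypothesis forces $\dim_F D = d = \rk \mfont$ and provides a canonical isomorphism
\[ \calR(Y)[1/t] \otimes_F D \longrightarrow \mfont[1/t] \]
of $(\phi_q,\Gamma_F)$-modules. Pulling back the $\bdr$-filtrations along the maps $\iota_k$ equips $D$ with $h$ filtrations $\Fil_j^\bullet$, so $\mfont(D)$ is defined. The natural candidate for an inverse map $\mfont(\dcris(\mfont)) \to \mfont$ then arises from the chain of inclusions
\[ \mfont(D) \subset \calR(Y)[1/\lambda] \otimes_F D \subset \calR(Y)[1/t] \otimes_F D \cong \mfont[1/t], \]
and the filtration conditions defining $\mfont^+(D)$ are precisely what is needed to ensure that the resulting map $\mfont(\dcris(\mfont)) \hookrightarrow \mfont[1/t]$ has image inside $\mfont$ rather than merely in $\mfont[1/t]$.

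To finish, I must verify that this map is an isomorphism. Using $\mfont = \bigcap_j \mfont[\lambda_j/\lambda]$ (crystalline hypothesis) and $\mfont(D) = \bigcap_j \mfont(D)[\lambda_j/\lambda]$ (Theorem~A), it suffices to prove that the induced map $\mfont(D)[\lambda_j/\lambda] \to \mfont[\lambda_j/\lambda]$ is an isomorphism for each $j$. Inverting $\lambda_j/\lambda$ singles out the $j$-th Lubin-Tate variable $Y_j$, and after extending scalars along the remaining coordinates each such localisation should become a Robba ring in one Lubin-Tate variable carrying the $(\phi_q,\Gamma_F)$-structure of \cite{KR09}, so that the one-variable equivalence of categories developed in \cite{LB10} and \cite{KR09} applies. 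The main obstacle I foresee is precisely this reduction: one must identify each $\calR(Y)[\lambda_j/\lambda]$ with a Kisin-Ren Robba ring over a suitable coefficient extension, verify that the $j$-th filtration on $D$ induced by $\iota_{-j}$ matches the one-variable filtration under this identification, and check that the Lubin-Tate $\Gamma_F$-action is correctly preserved. Granting this, applying the one-variable equivalence on each localisation and intersecting over $j$ yields $\mfont(\dcris(\mfont)) = \mfont$, completing the proof.
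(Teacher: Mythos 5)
There are genuine gaps, both concentrated in the converse direction $\mfont \mapsto \dcris(\mfont)$. First, you treat the isomorphism $\calR(Y)[1/t] \otimes_F \dcris(\mfont) \to \mfont[1/t]$ as something the crystalline hypothesis ``provides''. It does not: the hypothesis $\dim \dcris(\mfont) = d$ together with $\Frac(\calR(Y))^{\Gamma_F} = F$ (lemma \ref{invfro}) gives injectivity and an isomorphism after passing to $\Frac(\calR(Y))$, but surjectivity after inverting only $t$ requires showing that the determinant of the comparison map, which generates a $\Gamma_F$-stable ideal, becomes a unit in $\calR(Y)[1/t]$. This needs the classification of $\Gamma_F$-stable principal ideals of $\calR^{[s;+\infty[}(Y)$ as products of the $Q_n(Y_j)/p$ (proposition \ref{idealgam}, feeding into corollary \ref{rstdm}); it is one of the real inputs of the theorem and cannot be absorbed into the definition. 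Second, your plan to finish by identifying each localization $\calR(Y)[\lambda_j/\lambda]$ with a Kisin--Ren one-variable Robba ring ``over a suitable coefficient extension'' and invoking the one-variable equivalence of \cite{LB10}, \cite{KR09} would fail as stated: the coefficient object in the remaining $h-1$ variables is a large non-local ring, not a finite extension of $F$, and neither reference gives an equivalence of categories (nor a theory of $\dcris$ and filtrations) over such a base. You flag this yourself as the main obstacle, but it is precisely the substance of the proof. The paper avoids the reduction altogether: it proves that the $G_F$-stable $\bdr^+$-lattice $\bdr^+ \otimes^{\phi^{-n}}_{\calR^{[s;+\infty[}(Y)} \mfont^{[s;+\infty[}$ equals $\Fil_j^0(\bdr \otimes_F^{\sigma^{-n}} \dcris(\mfont))$ for the filtration obtained by intersecting with powers of $t$ (lemma \ref{misgam} and proposition \ref{filok} --- a lattice rigidity statement your proposal never addresses), and then uses the membership criterion that $y \in \calR(Y)[1/t] \otimes_{\calR(Y)} \mfont$ lies in $\mfont$ if and only if $\iota_n(y)$ lies in that lattice for all $n \gg 0$, which rests on theorem \ref{embdr} and items (1) and (2) of definition \ref{defpgcr}.

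The easy direction is closer to the mark but also has soft spots. Your proposed proof of $(\calR(Y)[1/t])^{\Gamma_F} = F$ by looking at ``leading order'' multigraded pieces is problematic: elements of $\calR(Y)$ are Laurent-type series with no lowest-degree term, and the $\Gamma_F$-action is not diagonal on monomials, so the character-independence argument does not apply termwise; the paper instead embeds into $\bdr$ via $\iota_n$ and uses $\bdr^{G_F} = F$ (lemma \ref{invfro}). Likewise, the claim that the pulled-back filtrations on $\dcris(\mfont(D))$ return the original $\Fil_j^\bullet$ is asserted as being ``tailored'' into the definition, whereas it actually requires knowing that $\bdr^+ \otimes_{\iota_n(\calR^+(Y))} \iota_n(\mfont^+(D)) = \Fil_{-j}^0(\bdr \otimes_F^{\sigma^{-j}} D)$ (proposition \ref{surjlatt}), which is a nontrivial consequence of the structure theory of $\mfont^+(D)$.
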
 

Note that if $h=1$, then the $(\phi,\Gamma)$-modules that we construct are the classical cyclotomic ones, and theorems A, B and C are well-known.

We now give a short description of the contents of this article: in \S\ref{perlt}, we give some reminders about the $p$-adic periods of Lubin-Tate formal $\OO_F$-modules. In \S\ref{rps}, we define the various rings of power series that we use, and establish some of their properties. In \S\ref{iotanlt}, we embed those rings in the usual rings of $p$-adic periods. In \S\ref{kisinren}, we briefly survey Kisin and Ren's construction and explain why $(\phi_q,\Gamma_F)$-modules over rings of power series in several variables are needed. In \S\ref{constpgm}, we attach such objects to filtered $\phi_q$-modules and prove theorem A. In \S\ref{propsmd}, we prove theorem B. In \S\ref{pmfedp}, we study crystalline $(\phi_q,\Gamma_F)$-modules and prove theorem C.

\section{Periods of Lubin-Tate formal groups}\label{perlt}

Let $\LT_h$ be the Lubin-Tate formal $\OO_F$-module for which multiplication by $p$ is given by $[p](T) =pT+T^q$. We denote by $[a](T)$ the element of $\OO_F \dcroc{T}$ that gives the action of $a \in \OO_F$ on $\LT_h$ and by $S(T,U) = T \oplus U$ the element of $\OO_F \dcroc{T,U}$ that gives addition.

Let $\pi_0 = 0$ and for each $n \geq 1$, let $\pi_n \in \Qpbar$ be such that $[p](\pi_n)=\pi_{n-1}$, with $\pi_1 \neq 0$. We have $\vp(\pi_n) = 1/q^{n-1}(q-1)$ if $n \geq 1$. Let $F_n=F(\pi_n)$ and let $F_\infty = \cup_{n \geq 1} F_n$. Recall that $\Gal(F_\infty/F) \simeq \OO_F^\times$ and that the maximal abelian extension of $F$ is $F_\infty \cdot F^{\unr}$. Denote by $H_F$ the group $\Gal(\Qpbar/F_\infty)$, by $\Gamma_F$ the group $\Gal(F_\infty/F)$ and by $\chi_{\LT}$ the isomorphism $\chi_{\LT} : \Gamma_F \to \OO_F^\times$. In the sequel, we sometimes directly identify $\Gamma_F$ with $\OO_F^\times$, that is we drop ``$\chi_{\LT}$'' from the notation to make the formulas less cumbersome.

Let $\etplus = \projlim_{x \mapsto x^q} \OO_{\Cp}/p$ and $\atplus=W(\etplus)$ denote Fontaine's rings of periods (see \cite{FPP}). Note that we take the limit with respect to the maps $x \mapsto x^q$, which does not change the rings. Let $\phi_q : \atplus \to \atplus$ be given by $\phi_q=\phi^h$. Recall that in \S 9.2 of \cite{C02}, Colmez has constructed a map $\{\cdot\} : \etplus \to \atplus$ having the following property: if $x \in \etplus$, then $\{x\}$ is the unique element of $\atplus$ that lifts $x$ and satisfies $\phi_q(\{x\}) = [p](\{x\})$. Let $\theta : \atplus \to \OO_{\Cp}$ denote Fontaine's map (see \cite{FPP}). If $x=(x_0,x_1,\hdots)$, then $\theta(\{x\}) = \lim_{n \to \infty} [p^n](\hat{x}_n)$, where $\hat{x}_n \in \OO_{\Cp}$ is any lift of $x_n$.

Let $u=\{(\overline{\pi}_0,\overline{\pi}_1,\hdots)\} \in \atplus$, so that $g(u) = [\chilt(g)](u)$ if $g \in G_F$. 

Let $\log_{\LT}(T) \in F\dcroc{T}$ denote the Lubin-Tate logarithm map, which converges on the  open unit disk and satisfies $\log_{\LT}([a](T)) = a  \cdot \log_{\LT}(T)$ if $a\in \OO_F$. Recall (see \S 9.3 of \cite{C02}) that $\log_{\LT}(u)$ converges in $\btrigplus$ to an element $t_F$ which satisfies $g(t_F)=\chilt(g) \cdot t_F$.

Let $Q_k(T)$ be the minimal polynomial of $\pi_k$ over $F$. We have $Q_0(T)=T$, $Q_1(T)=p+T^{q-1}$ and $Q_{k+1}(T)=Q_k([p](T))$ if $k \geq 1$. Note that 
\[ \log_{\LT}(T) = T \cdot \prod_{k \geq 1} \frac{Q_k(T)}{p}. \] 
Indeed,  $\log_{\LT}(T) = \lim_{k \to \infty} p^{-k} \cdot [p^k](T)$ (\S 9.3 of \cite{C02}) and $[p^k](T) = Q_0(T) \cdots Q_k(T)$. Let $\exp_{\LT}(T)$ denote the inverse of $\log_{\LT}(T)$. We have $\exp_{\LT}(T) = \sum_{k=1}^\infty e_k T^k$ with $v_p(e_k) \geq -k/(q-1)$. For example, $\log_{\Gm}(T)=\log(1+T)$ and $\exp_{\Gm}(T)=\exp(T)-1$.

\begin{rema}\label{chgfrob}
Our special choice of $[p](T)=pT+T^q$ is the simplest. Since $[p](T)$ belongs to $\Zp[T]$, the series $Q_k(T)$, $\log_{\LT}(T)$ and $\exp_{\LT}(T)$ all have coefficients in $\Qp$. It also implies that $[\sigma(a)] (T)  = \sigma( [a](T) )$, since $[a](T) = \exp_{\LT}(a \cdot \log_{\LT}(T))$.
\end{rema}

\begin{lemm}\label{lielt}
If $z \in \MM_{\Cp}$, then 
\[ \frac{[1+a](z)-z}{a} = \log_{\LT}(z) \cdot \frac{dS}{dU} (z,0) + \BO(a), \]
as $a \to 0$ in $\OO_F$.
\end{lemm}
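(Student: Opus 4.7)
The plan is to reduce both sides to their linear-order Taylor expansions and compare, exploiting the two factorisations of the relevant formal power series maps.

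First I would rewrite the numerator using the formal group law: since $1+a$ acts on $\LT_h$ via the composition of the identity and $[a]$, we have $[1+a](z) = z \oplus [a](z) = S(z,[a](z))$. Next I expand $S$ in its second argument as a power series over $\OO_F$: $S(z,Y) = z + \tfrac{dS}{dU}(z,0)\,Y + Y^2 \cdot R(z,Y)$, where $R(z,Y) \in \OO_F\dcroc{z,Y}$. Since $z \in \MM_{\Cp}$ and $[a](z) \in \MM_{\Cp}$, the substitution $Y = [a](z)$ is legitimate and gives
\[ [1+a](z) - z = \tfrac{dS}{dU}(z,0) \cdot [a](z) + [a](z)^2 \cdot R(z,[a](z)). \]

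Next I would expand $[a](z)$ to first order in $a$. Using the identity $[a](T) = \exp_{\LT}(a \cdot \log_{\LT}(T))$ recalled in the excerpt, together with $\exp_{\LT}(X) = X + \sum_{k \geq 2} e_k X^k$ where $v_p(e_k) \geq -k/(q-1)$, I can substitute $X = a \log_{\LT}(z)$. The series converges because $z \in \MM_{\Cp}$ forces $\log_{\LT}(z) \in \MM_{\Cp}$, and each term of degree $k \geq 2$ contributes at least $a^k \cdot e_k \cdot \log_{\LT}(z)^k$, whose valuation tends to $+\infty$ with $k$ uniformly for $a \in \OO_F$. The upshot is $[a](z) = a \log_{\LT}(z) + \BO(a^2)$, where the implicit constant depends on $z$ but not on $a$.

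Finally I would assemble the pieces. Dividing the identity from the first paragraph by $a$ gives
\[ \frac{[1+a](z)-z}{a} = \tfrac{dS}{dU}(z,0) \cdot \frac{[a](z)}{a} + \frac{[a](z)^2}{a} \cdot R(z,[a](z)). \]
The first term equals $\tfrac{dS}{dU}(z,0) \cdot \log_{\LT}(z) + \BO(a)$ by the expansion of $[a](z)$, and the second term is $\BO(a)$ because $[a](z)^2/a = \BO(a)$ while $R(z,[a](z))$ stays bounded as $a \to 0$. The only slightly delicate point is to check uniformity of the error terms for $a$ small, which reduces to the elementary observation that all the series involved ($S$, $\exp_{\LT}$, $\log_{\LT}$, $[a]$) have coefficients of controlled denominators, so their values at points of fixed positive valuation in $\Cp$ are bounded; I do not expect any substantial obstacle here.
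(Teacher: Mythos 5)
Your argument is correct and is essentially the paper's own proof, just written out in more detail: the paper likewise writes $[1+a](z)=S(z,[a](z))$, uses $[a](z)=\exp_{\LT}(a\cdot\log_{\LT}(z))=a\cdot\log_{\LT}(z)+\BO(a^2)$ for $a$ small, and expands $S$ in its second argument. One small inaccuracy in your convergence discussion: $z\in\MM_{\Cp}$ does not force $\log_{\LT}(z)\in\MM_{\Cp}$ (its valuation can even be negative when $\vp(z)$ is small), so the expansion of $\exp_{\LT}(a\log_{\LT}(z))$ need not converge uniformly for all $a\in\OO_F$; but since the lemma only concerns $a\to 0$, taking $a$ small enough (as the paper does) makes $\vp(a\log_{\LT}(z))$ large and the argument goes through unchanged.
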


\begin{proof}
We are looking at the limit of $(S(z,[a](z))-z)/a$ as $a \to 0$. If $a$ is small enough, then $[a](z) = \exp_{\LT}(a \cdot\log_{\LT}(z)) = a \cdot\log_{\LT}(z) + \BO(a^2)$, which implies the lemma.
\end{proof}

\section{Rings of multivariable power series}\label{rps}

We consider power series in the $h$ variables $Y_0,\hdots,Y_{h-1}$. If $Y^m = Y_0^{m_0} \cdots Y_{h-1}^{m_{h-1}}$ is a monomial, then its weight is $w(m) = m_0 + p m_1 + \cdots + p^{h-1} m_{h-1}$. If $I$ is a subinterval of $[0;+\infty]$ and if $J = \{ j_1, \hdots, j_k \}$ is a subset of $\{0,\hdots,h-1\}$, then (adapting Appendix A of \cite{SZ} to our situation) we define $\calR^I (\{Y_j\}_{j \in J})$ to be the ring of power series  \[ f(Y_{j_1},\hdots,Y_{j_k}) = \sum_{m_1,\hdots,m_k \in \ZZ} a_{m_1 \hdots m_k} Y_{j_1}^{m_1} \cdots Y_{j_k}^{m_k}, \] 
such that $\vp(a_m) + w(m)/r \to +\infty$ for all $r \in I$. In other words, $f(Y)$ is required to converge on the polyannulus $\{ (Y_0,\hdots,Y_{h-1})$ such that $|Y_0|=p^{-1/r}$, \dots, $|Y_{h-1}|=p^{-p^{h-1}/r} \}$ for all $r \in I$. We then define $W(f(Y),r) = \inf_{m \in \ZZ} (\vp(a_m) + w(m)/r)$ and, if $I$ is closed, $W(f(Y),I) = \inf_{r \in I} W(f(Y),r)$.

We let $\calR^+ (\{Y_j\}_{j \in J}) = \calR^{[0;+\infty[} (\{Y_j\}_{j \in J})$ be the ring of holomorphic functions on the open unit polydisk corresponding to $J$. The Robba ring $\calR(\{Y_j\}_{j \in J})$ is defined as $\calR(\{Y_j\}_{j \in J})  = \cup_{r  \geq 0} \calR^{[r;+\infty[} (\{Y_j\}_{j \in J})$. In order to lighten the notation, we write $\calR^I(Y)$, $\calR^+(Y)$ and $\calR(Y)$ instead of $\calR^I(Y_0,\hdots,Y_{h-1})$, $\calR^+(Y_0,\hdots,Y_{h-1})$ and $\calR(Y_0,\hdots,Y_{h-1})$.

The rings $\calR^I (\{Y_j\}_{j \in J})$ are endowed with an $F$-linear action of $\Gamma_F$, given by the formula $a(Y_j) = [\sigma^j(a)](Y_j)$. There is also an $F$-linear Frobenius map :
\[ \phi_q : \calR^I (\{Y_j\}_{j \in J}) \to \calR^{I'} (\{Y_j\}_{j \in J}), \] 
given by $Y_j \mapsto [p](Y_j)$, for appropriate $I$ and $I'$.

On the ring $\calR^I(Y)$, we can define in addition an absolute $\sigma$-semilinear Frobenius map $\phi$ by $Y_j \mapsto Y_{j+1}$ for $0 \leq j \leq h-2$ and $Y_{h-1} \mapsto [p](Y_0)$. This map $\phi$ has the property that $\phi^h = \phi_q$, and it also commutes with $\Gamma_F$.

Let $t_i = \log_{\LT}(Y_i)$. Since $a(Y_i) = [\sigma^i(a)](Y_i)$ if $a \in \Gamma_F$, we have $a(t_i) = \sigma^i(a) \cdot t_i$ so that $g(t_0 \cdots t_{h-1}) = \Nm_{F/\Qp}(\chilt(g)) \cdot t_0 \cdots t_{h-1} = \chi_{\cycl}(g) \cdot t_0 \cdots t_{h-1}$ if $g \in G_F$ as well as $\phi(t_0 \cdots t_{h-1})=p \cdot t_0 \cdots t_{h-1}$.  The element $t_0 \cdots t_{h-1}$ therefore behaves like a $\Qp$-multiple of the ``usual'' $t$ of $p$-adic Hodge theory (see proposition \ref{imgt} for a more precise statement).

The following two propositions are variations on the ``Weierstrass division theorem''.

\begin{prop}\label{weierdiv}
Let $I=[0;s]$ or $[0;s[$ and let $P(T) \in \OO_F[T]$ be a monic polynomial of degree $d$ whose nonleading coefficients are all divisible by $p$. If $f \in \calR^I (\{Y_j\}_{j \in J})$, then there exists $g \in \calR^I (\{Y_j\}_{j \in J})$ and $f_0,\hdots,f_{d-1} \in  \calR^I (\{Y_j\}_{j \in J \setminus \{i\}})$ such that 
\[ f = f_0 + f_1 Y_i + \cdots + f_{d-1} Y_i^{d-1} + g \cdot P(Y_i). \]
\end{prop}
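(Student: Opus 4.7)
The plan is to reduce to proving the decomposition in the Banach algebras $\calR^{\{r\}}$ attached to each single norm $W(\cdot,r)$ for $r \in I$, noting that $\calR^I = \bigcap_{r \in I} \calR^{\{r\}}$ and that $\calR^{\{r\}} \subset \calR^{\{r'\}}$ whenever $r' \leq r$. It therefore suffices to prove it at a single $r$ equal to (or slightly less than) $\sup I$, and the same $R=\sum_{k<d}f_k Y_i^k$ and $g$ will automatically lie in $\calR^{\{r'\}}$ for every $r' \leq r$ in $I$.

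For such a fixed $r$, I would use the Newton polygon of $P$ relative to the polysphere radius $\rho = p^{-p^i/r}$ to factor $P(Y_i) = P_{\mathrm{in}}(Y_i) \cdot P_{\mathrm{out}}(Y_i)$ in $\OO_F[Y_i]$, with $P_{\mathrm{in}}$ collecting the roots $\alpha \in \overline{F}$ of absolute value $\leq \rho$ and $P_{\mathrm{out}}$ the rest; both factors have integral coefficients by Galois invariance of absolute values. The factor $P_{\mathrm{out}}$ is a unit in $\calR^{\{r\}}(\{Y_j\}_{j\in J})$ because its roots lie strictly outside the closed $Y_i$-polydisk, so $|P_{\mathrm{out}}(Y_i)|$ is bounded below on it. The factor $P_{\mathrm{in}}$ is distinguished at level $r$: the bound $|\alpha| \leq \rho$ on its roots forces its coefficients to satisfy $|c_j| \leq \rho^{d_{\mathrm{in}}-j}$, so the leading term $Y_i^{d_{\mathrm{in}}}$ dominates on the polysphere $|Y_i| = \rho$. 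Applying the classical Weierstrass division by $P_{\mathrm{in}}$, with coefficients in the Banach ring $\calR^{\{r\}}(\{Y_j\}_{j \in J \setminus \{i\}})$, gives $f = R + g_{\mathrm{in}} P_{\mathrm{in}}$ with $R$ a polynomial in $Y_i$ of degree $<d_{\mathrm{in}} \leq d$ and $g_{\mathrm{in}} \in \calR^{\{r\}}$. Setting $g = g_{\mathrm{in}}/P_{\mathrm{out}}$ then yields the required $f = R + g\cdot P$ in $\calR^{\{r\}}$.

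The main obstacle is establishing the classical Weierstrass division itself (for the distinguished factor $P_{\mathrm{in}}$). I would proceed by iteration: writing $P_{\mathrm{in}}(Y_i) = Y_i^{d_{\mathrm{in}}} + c(Y_i)$ with $\deg c < d_{\mathrm{in}}$, one iterates the identity $f = L(f) + P_{\mathrm{in}} U(f) - c(Y_i)\, U(f)$, where $L$ is the remainder modulo $Y_i^{d_{\mathrm{in}}}$ and $U$ the upper quotient, absorbing the error $-c(Y_i) U(f)$ into the next iteration's input. The distinguished condition makes $f \mapsto c(Y_i)U(f)$ a strict contraction at level $r$ in the norm $W(\cdot,r)$, since the bound $|c(Y_i)|_r < \rho^{d_{\mathrm{in}}}$ outweighs the loss $|U(f)|_r \leq \rho^{-d_{\mathrm{in}}}|f|_r$; hence the iteration converges geometrically, and the partial sums assemble into $R$ and $g_{\mathrm{in}}$. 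The edge case where a root of $P$ lies exactly on the polysphere $|Y_i|=\rho$ destroys strict contraction, but this occurs for at most finitely many $r \in I$ and is bypassed by choosing $r$ generically in $I$ and using $\calR^I = \bigcap_{r\in I}\calR^{\{r\}}$ to conclude.
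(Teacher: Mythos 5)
Your route is genuinely different from the paper's (which, for closed $I$, simply invokes the classical Weierstrass division theorem and then passes to $I=[0;s[$ by uniqueness of the quotient and remainder), and most of your reductions are sound: since elements of $\calR^{[0;s]}$ have nonnegative exponents, $W(\cdot,r)\geq W(\cdot,s)$ for $r\leq s$, so working in the single Banach algebra at the top radius is legitimate; the Newton-polygon factorization $P=P_{\mathrm{in}}P_{\mathrm{out}}$ over $\OO_F$ and the invertibility of $P_{\mathrm{out}}$ are fine. The genuine gap is the case you dismiss as an edge case: when $P$ has a root $\alpha$ with $\vp(\alpha)=p^i/s$, i.e.\ lying exactly on the boundary sphere of the closed polydisk, your operator $f\mapsto c(Y_i)U(f)$ has operator norm $\leq 1$ but not $<1$, so the iteration need not converge, and your bypass (take $r<s$ generic and intersect) only produces $g$ and the $f_k$ on strictly smaller polydisks --- that proves the statement for $I=[0;s[$ but not for $I=[0;s]$, which is exactly what the closed-interval case is for. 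Moreover this configuration is not avoidable in the paper: in the proof of proposition \ref{mintmj} the division is performed in $\calR^{[0;s_n]}(Y)$ by $(Q_1(Y_j)\cdots Q_m(Y_j))^{a+b}$, and for $j=h-1$ the roots of $Q_m$ have valuation $1/(q^{m-1}(q-1))=p^{h-1}/s_n$, i.e.\ they sit exactly on the boundary (compare also proposition \ref{weierdivcirc}, where all roots lie on the circle of integration radius).

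The statement does hold in the boundary case, but one needs the full Weierstrass division theorem for distinguished divisors, where the bounds $|c_j|\leq\rho^{d-j}$ are allowed to be equalities; its standard proofs proceed by reduction/approximation rather than by a strict contraction. A concrete repair inside your framework: split $P_{\mathrm{in}}$ into monic linear factors $Y_i-\alpha$ over a finite extension of $F$ (all with $|\alpha|\leq\rho$) and divide by them successively using the explicit formula $f=f|_{Y_i=\alpha}+(Y_i-\alpha)g$ with $g=\sum_n a_n\,(Y_i^{n-1}+\alpha Y_i^{n-2}+\cdots+\alpha^{n-1})$, where $f=\sum_n a_nY_i^n$ with $a_n\in\calR^{[0;s]}(\{Y_j\}_{j\neq i})$; each term has $W(\cdot,s)\geq W(a_n,s)+(n-1)p^i/s$, so this converges even when $\vp(\alpha)=p^i/s$. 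Then descend to $F$ by uniqueness of quotient and remainder (Galois invariance), and recover the half-open case, as the paper does, by applying the closed case to all $[0;s']\subset[0;s[$ and gluing via the same uniqueness.
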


\begin{proof}
If $I=[0;s]$ is closed, then this is a straightforward consequence of the Weierstrass division theorem. Since $g$ and the $f_i$'s are uniquely determined, the result extends to the case when $I=[0;s[$.
\end{proof}

\begin{prop}\label{weierdivcirc}
Let $I=[s;s]$ and let $P(T) \in \OO_F[T]$ be a monic polynomial of degree $d$, all of whose roots are of valuation $-1/s$. If $f \in \calR^I (\{Y_j\}_{j \in J})$, then there exists $g \in \calR^I (\{Y_j\}_{j \in J})$ and $f_0,\hdots,f_{d-1} \in  \calR^I (\{Y_j\}_{j \in J \setminus \{i\}})$ such that 
\[ f = f_0 + f_1 Y_i + \cdots + f_{d-1} Y_i^{d-1} + g \cdot P(Y_i). \]
\end{prop}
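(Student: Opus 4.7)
The strategy follows that of proposition \ref{weierdiv}: reduce to a one-variable Weierstrass-type division over a Banach ring. Setting $K = J \setminus \{i\}$, the ring $B := \calR^{[s;s]}(\{Y_j\}_{j \in K})$ is a Banach $F$-algebra under $W(\cdot,s)$, and one has a canonical identification of $\calR^{[s;s]}(\{Y_j\}_{j \in J})$ with the Banach algebra of Laurent series $\sum_{n \in \ZZ} b_n Y_i^n$, with $b_n \in B$ satisfying $W(b_n, s) + p^i n/s \to +\infty$ as $|n| \to \infty$. This identification comes simply from grouping the expansion of an element of $\calR^{[s;s]}(\{Y_j\}_{j \in J})$ by powers of $Y_i$ and absorbing the remaining variables into the coefficients.

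The statement then amounts to a one-variable Weierstrass division for Laurent series over $B$: given $f = \sum_n f_n Y_i^n$, produce $f_0,\hdots,f_{d-1} \in B$ and a Laurent series $g$ over $B$ such that $f = g \cdot P(Y_i) + \sum_j f_j Y_i^j$. Since all roots of $P$ have the same valuation, its Newton polygon is a single segment, matching (after the appropriate normalisation) the radius of the $Y_i$-circle picked out by $I = [s;s]$. By the classical Weierstrass division theorem on a closed annulus, relativised over the Banach ring $B$, the decomposition exists (and is in fact unique): one sets up $P(Y_i)$-adic recurrences determining the coefficients of $g$ from those of $f$, using the leading term $Y_i^d$ of $P$ on the one hand and the nonzero constant term $a_0$ on the other; the Newton-polygon hypothesis, together with the completeness of $B$, guarantees convergence of the resulting Laurent series. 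The $f_j$'s then represent $f$ in the free $B$-module of rank $d$ with basis $1, Y_i, \hdots, Y_i^{d-1}$.

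The main technical obstacle, compared with proposition \ref{weierdiv}, is that the classical formal-power-series Weierstrass division theorem no longer applies directly, because $P(Y_i)$ is not invertible in $\calR^{[s;s]}(\{Y_j\}_{j \in J})$: its roots (after the implicit rescaling) lie on the relevant circle in $Y_i$. Instead, one must exploit the precise matching between the Newton polygon of $P$ and the radius of the circle, which is what makes both the ``head'' recurrence (driven by the leading term $Y_i^d$) and the ``tail'' recurrence (driven by the constant term $a_0$) contractive in the norm $W(\cdot,s)$. This is a standard result in non-archimedean analysis, essentially the affinoid Weierstrass division theorem on an annulus, and it adapts to the present multivariable setting without new difficulty; the only extra ingredient is the interpretation of $\calR^{[s;s]}(\{Y_j\}_{j \in J})$ as a ring of Laurent series over the smaller Banach ring $B$.
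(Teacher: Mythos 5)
Your proposal is correct and is essentially the paper's own argument: the paper likewise reduces to a one-variable division over the coefficient ring in the remaining variables, writing $f=f^++f^-$ and Weierstrass-dividing $f^+$ by $P(Y_i)$ and $f^-$ by $Q(1/Y_i)$, where $Q(T)=P(1/T)T^d/P(0)$ is the reciprocal polynomial --- exactly your ``head'' (leading term $Y_i^d$) and ``tail'' (constant term $a_0$) recurrences, packaged by you as the relative annulus Weierstrass division over the Banach ring $B$. Your reading of the hypothesis --- roots lying on the $Y_i$-circle determined by $I=[s;s]$, up to normalisation --- is the intended one (a monic polynomial in $\OO_F[T]$ cannot literally have roots of valuation $-1/s$), and it is what the application to $Q_k(Y_i)$ in theorem \ref{embdr} requires.
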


\begin{proof}
The polynomial $Q(T)=P(1/T)T^d/P(0)$ is monic and all its roots are of valuation $1/s$. Write $f=f^+ + f^-$ where $f^+$ contains positive powers of $Y_i$ and $f^-$ contains negative powers of $Y_i$. One may Weierstrass divide $f^+$ by $P(Y_i)$ and $f^-$ by $Q(1/Y_i)$, which implies the proposition.
\end{proof}

\begin{lemm}\label{qpanry}
The action of $\Gamma_F$ on $\calR^I(Y)$ is locally $\Qp$-analytic, and we have
\[ [1+a](f(Y)) = f(Y) + \sum_{j=0}^{h-1} \sigma^j(a) \cdot \log_{\LT}(Y_j) \cdot \frac{dS}{dU}(Y_j,0) \cdot \frac{df}{dY_j} (Y) + \BO(a^2). \]
\end{lemm}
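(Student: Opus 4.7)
The plan is to derive the expansion from Lemma \ref{lielt} applied variable-by-variable, combined with a multivariable Taylor expansion of $f$. Since $\Gamma_F$ acts on $\calR^I(Y)$ by the substitution ring homomorphism $Y_j \mapsto [\sigma^j(a)](Y_j)$, we have
\[ [1+a](f(Y)) = f\bigl([1+\sigma^0(a)](Y_0),\ldots,[1+\sigma^{h-1}(a)](Y_{h-1})\bigr). \]
Lemma \ref{lielt} applied with $z = Y_j$ and increment $\sigma^j(a)$, together with the observation $\vp(\sigma^j(a)) = \vp(a)$, yields
\[ [1+\sigma^j(a)](Y_j) = Y_j + \sigma^j(a) \cdot \log_{\LT}(Y_j) \cdot \frac{dS}{dU}(Y_j,0) + \BO(a^2). \]

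I would first establish the formula on monomials and then extend by continuity. Setting $u_j(Y_j) = \log_{\LT}(Y_j) \cdot \frac{dS}{dU}(Y_j,0)$ and expanding
\[ [1+a](Y^m) = \prod_{j=0}^{h-1} \bigl( Y_j + \sigma^j(a) u_j(Y_j) + \BO(a^2) \bigr)^{m_j} \]
modulo $a^2$ gives $Y^m + \sum_j \sigma^j(a) u_j(Y_j) \cdot \partial_{Y_j}(Y^m) + \BO(a^2)$, matching the claimed right-hand side. To extend to general $f \in \calR^I(Y)$, I would verify that the remainder can be bounded uniformly in each Fréchet seminorm $W(\cdot,r)$: for $\vp(a)$ large enough (depending on $r$), the substitution $Y_j \mapsto [1+\sigma^j(a)](Y_j)$ defines a continuous endomorphism of $\calR^I(Y)$, and the truncation error $[1+\sigma^j(a)](Y_j) - Y_j - \sigma^j(a) u_j(Y_j)$ has $W(\cdot,r)$-norm $\geq 2\vp(a) + C_r$ for some constant $C_r$, which lets one sum the monomial estimates with a uniform $\BO(a^2)$ bound. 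This uniform control on the remainder is the main technical obstacle; once it is in hand, linearity and continuity finish the proof of the formula.

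For local $\Qp$-analyticity, the key input is the identity $[1+b](Y_j) = \exp_{\LT}\bigl((1+b)\log_{\LT}(Y_j)\bigr)$, which follows from $\log_{\LT} \circ [c] = c \cdot \log_{\LT}$. Expanding this as a power series in $b$ via the series for $\exp_{\LT}$ exhibits $[1+b](Y_j)$ as an analytic function of $b$ with coefficients in $\calR^I(Y_j)$, convergent on a disk whose radius depends on $r$. Fixing a $\Qp$-basis $e_0,\ldots,e_{h-1}$ of $\OO_F$ and writing $a = \sum_i b_i e_i$ with $b_i \in \Zp$ of large valuation, one then obtains $(1+a)(f)$ as a convergent power series in $(b_0,\ldots,b_{h-1})$ with values in $\calR^I(Y)$ by substituting into $f$ and reorganizing; the linear part recovers the first-order formula already established.
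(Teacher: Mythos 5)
Your proposal is correct and follows essentially the same route as the paper: the key identity is $[1+\sigma^j(a)](Y_j)=Y_j\oplus[\sigma^j(a)](Y_j)=Y_j+\sigma^j(a)\cdot\log_{\LT}(Y_j)\cdot\frac{dS}{dU}(Y_j,0)+\BO(a^2)$, which the paper uses directly and you obtain via lemma \ref{lielt}, followed by the chain-rule expansion in the variables. The extra material on uniform $W(\cdot,r)$-estimates for the remainder and on the $\exp_{\LT}$-expansion giving local $\Qp$-analyticity is a sound elaboration of details the paper's one-line proof leaves implicit.
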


\begin{proof}
The above formula follows from the fact that $[1+a](Y_j) = Y_j \oplus [a](Y_j) = Y_j \oplus (\sigma^j(a) \cdot \log_{\LT}(Y_j) + \BO(a^2))$. 
\end{proof}

\begin{prop}\label{injeval}
Let $\rho=(\rho_1,\hdots,\rho_{h-1})$ and let $\calR^\rho_{F_k} (T_1,\hdots,T_{h-1})$ denote the ring of Laurent series converging for $|T_i| = \rho_i$,  with coefficients in $F_k$. If the $z_i \in \MM_{\hat{F}_\infty}$ are such that $\log_{\LT}(z_i) \neq 0$, $|z_i| = \rho_i$ and $g(z_i) = [\sigma^i(g)] (z_i)$ for $g \in \OO_F^\times$, then the map $\calR^\rho_{F_k} (T_1,\hdots,T_{h-1}) \to \Cp$ given by evaluating at $(z_1,\hdots,z_{h-1})$ is injective.
\end{prop}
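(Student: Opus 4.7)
The plan is to use the Galois action to promote the single identity $f(z_1,\ldots,z_{h-1}) = 0$ to the vanishing of all partial derivatives of $f$ at this point, after which the identity principle on the polytorus closes the argument.

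Since the coefficients of $f$ lie in $F_k$, every $g \in \Gal(F_\infty/F_k)$ fixes them, so applying $g$ to $f(z_1,\ldots,z_{h-1}) = 0$ yields $f([\sigma(g)](z_1),\ldots,[\sigma^{h-1}(g)](z_{h-1})) = 0$. Identifying $\Gal(F_\infty/F_k)$ with $1 + p^k\OO_F$ via $\chi_{\LT}$, this says that the function $G(1+b) := f([\sigma(1+b)](z_1),\ldots,[\sigma^{h-1}(1+b)](z_{h-1}))$ vanishes identically for $b \in p^k\OO_F$. Using $[\sigma^i(1+b)](z_i) = z_i \oplus \exp_{\LT}(\sigma^i(b)\log_{\LT}(z_i))$ together with Lemma \ref{lielt}, each $[\sigma^i(1+b)](z_i)$ is a convergent power series in $\sigma^i(b)$ whose linear term is $l_i\sigma^i(b)$ with $l_i := \log_{\LT}(z_i)\cdot(\partial S/\partial U)(z_i,0)$; this is nonzero because $\log_{\LT}(z_i) \neq 0$ by hypothesis and $(\partial S/\partial U)(z_i,0) = 1 + \mathrm{O}(z_i)$ is a unit in $\Cp$. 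Substituting into the local Taylor expansion of $f$ around $(z_1,\ldots,z_{h-1})$, which converges on a polydisc of polyradius $(\rho_1,\ldots,\rho_{h-1})$ about the point, produces a convergent development
\[ G(1+b) \;=\; \sum_{n_1,\ldots,n_{h-1} \geq 0} c_n\, \sigma(b)^{n_1}\cdots\sigma^{h-1}(b)^{n_{h-1}}, \]
in which $c_n$ equals $(n!)^{-1}\, l_1^{n_1}\cdots l_{h-1}^{n_{h-1}}\,(\partial_1^{n_1}\cdots\partial_{h-1}^{n_{h-1}} f)(z_1,\ldots,z_{h-1})$ plus contributions from strictly lower-order partials.

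The critical step is to conclude $c_n = 0$ for all $n$. The forms $\sigma^1,\ldots,\sigma^{h-1}$ are $\Cp$-linearly independent $\Qp$-linear maps $F \to \Cp$ (distinct embeddings, by Dedekind independence of characters), hence algebraically independent as elements of the symmetric algebra on the $\Qp$-dual of $F$. After expanding $b = \sum b_j e_j$ in a $\ZZ_p$-basis of $\OO_F$, the monomials $\sigma(b)^{n_1}\cdots\sigma^{h-1}(b)^{n_{h-1}}$ are therefore $\Cp$-linearly independent polynomial functions in $b_0,\ldots,b_{h-1}$, so a convergent power series in those coordinates that vanishes on the open ball $p^k\OO_F$ must have all its coefficients zero. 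A triangular induction on $|n|$, exploiting $l_i \neq 0$, then yields $(\partial^n f)(z_1,\ldots,z_{h-1}) = 0$ for every multi-index $n$. The local Taylor expansion of $f$ about $(z_1,\ldots,z_{h-1})$ is consequently identically zero, so $f$ vanishes on a rigid-analytic polydisc neighborhood of the point inside the polycircle; the identity principle for the integral domain $\calR^\rho_{F_k}$ then forces $f = 0$.

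The main obstacle I expect is the triangular extraction step: one must unwind the composition $f \circ (z + v(b))$ carefully enough to verify that the coefficient $c_n$ really isolates $(\partial^n f)(z)$ once the partials of strictly lower order have been killed. Everything else reduces to well-known ingredients---Galois invariance of the coefficients, Dedekind--Artin linear independence, and the identity principle on an irreducible affinoid---while this bookkeeping step is the only place that requires honest computation.
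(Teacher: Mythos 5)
Your proposal is correct and follows essentially the same route as the paper: apply $g=1+b\in\Gamma_{F_k}$ to the vanishing relation, expand via the Lubin--Tate logarithm (lemma \ref{lielt}), use the linear (hence algebraic) independence of the embeddings $\sigma,\hdots,\sigma^{h-1}$ to kill the coefficients, deduce that all partial derivatives of $f$ vanish at $(z_1,\hdots,z_{h-1})$, and conclude $f=0$. The only differences are bookkeeping: the paper isolates the lowest-order nonvanishing homogeneous term and invokes Artin's theorem there, while you run a triangular induction over all multi-indices and make the final identity-principle step explicit.
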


\begin{proof}
Suppose that $f(z_1,\hdots,z_{h-1})=0$ for some $f \in \calR^\rho_{F_k} (T_1,\hdots,T_{h-1})$. If $g \in \Gamma_{F_k}$, then $f(g(z_1),\hdots,g(z_{h-1}))=0$. If $g=1+a$ with $a$ small, then lemma  \ref{lielt} provides us with $h-1$ elements $y_1,\hdots,y_{h-1}$ of $\hat{F}_\infty$ such that $g(z_i) = z_i + \sigma^i(a) \cdot y_i+\BO(a^2)$. Since $y_i=\log_{\LT}(z_i) \cdot dS/dU(z_i,0)$ and $dS/dU$ is a unit and $\log_{\LT}(z_i) \neq 0$, the elements $y_1,\hdots,y_{h-1}$ are all nonzero.

If $f \neq 0$ and $m$ is the smallest index for which $f$ has a nonzero partial derivative of order $m$ at $(z_1,\hdots,z_{h-1})$ and if we expand $f(g(z_1),\hdots,g(z_{h-1}))$ around $(z_1,\hdots,z_{h-1})$ (which generalizes lemma \ref{qpanry}), then we get
\begin{multline*} \sum_{j_1+\cdots+j_{h-1}=m} (\sigma^1(a)y_1)^{j_1} \cdots (\sigma^{h-1}(a)y_{h-1})^{j_{h-1}} \frac{d^m f}{dT_1^{j_1} \cdots  dT_{h-1}^{j_{h-1}}} (z_1,\hdots,z_{h-1}) \\ + \BO(a^{m+1}). 
\end{multline*}

Since $f(g(z_1),\hdots,g(z_{h-1}))=0$, the above linear combination is a homogeneous polynomial, of degree $m$ in $h-1$ variables and coefficients in $\hat{F}_\infty$, that is identically zero on $(\sigma^1(a),\hdots,\sigma^{h-1}(a))$. The shortest nonzero polynomial that is identically zero on $(\sigma^1(a),\hdots,\sigma^{h-1}(a))$ can be taken to have coefficients in $F$ and Artin's theorem on the algebraic independence of characters implies that it is equal to zero. Since all the $y_i$'s are nonzero, all the partial derivatives of order $m$ of $f$ are zero, so that finally $f=0$.
\end{proof}

\section{Embeddings in $\bdr$}\label{iotanlt}

We now explain how to embed the rings of power series of the previous section in the usual rings of $p$-adic periods. Let $\bt^I$ be the ring defined in \S 2.1 of \cite{LB2}. This ring is complete with respect to the valuation $V(\cdot,I)$ (denoted by $V_I(\cdot)$ in \S 2.1 of ibid.). Recall that if $x=\sum_{k \geq 0} p^k [x_k] \in \atplus$, then $V(x,r)=\inf_k (\ve(x_k)+krp/(p-1))$. Set $r_F = p^{h-1} \cdot q/(q-1) \cdot (p-1)/p$ (for example, $r_{\Qp}=1$ and if $h>1$, then $r_F < p^{h-1}$).

\begin{prop}\label{valphiu}
If $r \geq r_F$ and $m \in \ZZ$, then $V(\phi^j(u)^m,r) = m \cdot p^j \cdot q/(q-1)$ for $0 \leq j \leq h-1$.
\end{prop}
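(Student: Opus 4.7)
The plan is to reduce to the case $m=1$ and compute $V(\phi^j(u),r)$ directly from the Teichmüller expansion of $\phi^j(u)$.

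By construction, $u$ is a lift of $\bar u = (\bar\pi_0,\bar\pi_1,\hdots) \in \etplus$, so its Teichmüller expansion $u = \sum_{k \geq 0} p^k [u_k]$ has $u_0 = \bar u$. Since $\phi$ acts on Teichmüller coordinates by $a \mapsto a^p$, we get $\phi^j(u) = \sum_{k \geq 0} p^k [u_k^{p^j}]$, and in particular $\bar u^{p^j}$ is the degree-zero Teichmüller coordinate of $\phi^j(u)$. From $\vp(\pi_n) = 1/(q^{n-1}(q-1))$ one computes $\ve(\bar u) = q/(q-1)$, hence $\ve(\bar u^{p^j}) = p^j q/(q-1)$. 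Taking only the $k=0$ term in the infimum defining $V$ yields the upper bound $V(\phi^j(u),r) \leq p^j q/(q-1)$ for every $r$.

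For the matching lower bound, write $\phi^j(u) = [\bar u^{p^j}] + p\,y_j$ with $y_j \in \atplus$. The ultrametric property of $V(\cdot,r)$ gives
\[ V(\phi^j(u),r) \geq \min\bigl( V([\bar u^{p^j}],r),\; V(p\,y_j,r) \bigr) \geq \min\bigl( p^j q/(q-1),\; rp/(p-1) \bigr), \]
where the second inequality uses $V(y_j,r) \geq 0$ since $y_j \in \atplus$. The definition of $r_F$ is tailored precisely so that $rp/(p-1) \geq p^{h-1} q/(q-1) \geq p^j q/(q-1)$ for all $r \geq r_F$ and all $0 \leq j \leq h-1$; hence the minimum equals $p^j q/(q-1)$, and combined with the upper bound equality holds.

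For arbitrary $m \in \ZZ$ the result follows from the multiplicativity $V(xy,r) = V(x,r) + V(y,r)$ on $\bt^{[r,r]}$ recalled in \S 2.1 of \cite{LB2}. The case $m > 0$ is immediate, and for $m < 0$ one uses that $V(\phi^j(u),r) < +\infty$ forces $\phi^j(u)$ to be a unit in $\bt^{[r,r]}$. I expect the only mildly delicate point to be the boundary case $r = r_F$, $j = h-1$, where the two terms in the ultrametric lower bound coincide; this is resolved cleanly by pairing the (non-strict) ultrametric lower bound with the unconditional upper bound coming from the Teichmüller coordinate, rather than by appealing to strict additivity.
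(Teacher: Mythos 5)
Your treatment of $m=1$ is correct and is essentially the paper's computation: both write $\phi^j(u)=[\bar u^{\,p^j}]+p\,y_j$ and play the $k=0$ Teichm\"uller term against the contribution of $p\atplus$. The genuine gap is the case $m<0$, which is exactly the case the proposition is needed for (it is applied to Laurent series in proposition \ref{embti}). Your justification that ``$V(\phi^j(u),r)<+\infty$ forces $\phi^j(u)$ to be a unit in $\bt^{[r;r]}$'' is wrong: finite valuation only means $\phi^j(u)\neq 0$, and $\bt^{[r;r]}$ is far from being a field. For instance, for $r=r_n$ with $n\geq h$, the element $t$ lies in $\bt^{[r_n;r_n]}$, is nonzero, and is killed by the ring homomorphism $\theta\circ\iota_n:\bt^{[r_n;r_n]}\to\Cp$, so it is not a unit. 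As it stands, you have neither given a meaning to $\phi^j(u)^m$ for $m<0$ inside $\bt^{[r;r]}$ nor computed its valuation.

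The repair is precisely what the paper does: factor $\phi^j(u)=[\pi^{p^j}](1+z)$ with $z=\sum_{k\geq 1}p^k[u_{k,j}/\pi^{p^j}]$. Since $[\pi^{p^j}]$ is invertible (its inverse is again a Teichm\"uller lift), everything reduces to showing that $1+z$ is a unit whose inverse also has valuation $\geq 0$; then $V(\phi^j(u)^m,r)=m\,p^jq/(q-1)$ for every $m\in\ZZ$ follows using only the inequality $V(xy,r)\geq V(x,r)+V(y,r)$, so you do not even need the multiplicativity you invoke for $m\geq 1$. With your estimate $\ve(u_{k,j})\geq 0$ one gets $V(z,r)\geq rp/(p-1)-p^{h-1}q/(q-1)$, which is $>0$ for $r>r_F$, and the geometric series then inverts $1+z$. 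At the boundary $r=r_F$ with $j=h-1$, which the statement includes (and which genuinely occurs: for $h=1$, $p=2$ one has $r_h=r_F$), this bound degenerates to $0$; here the paper's extra input $\ve(u_{k,j})>0$ (strict) is what makes $1+z$ a unit of $\atdag{,r}$. Your ``upper bound plus non-strict ultrametric bound'' trick salvages $m=1$ at the boundary, but it does not produce the invertibility needed for negative $m$, so the strict positivity of the higher Teichm\"uller coordinates cannot be fully sidestepped.
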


\begin{proof}
Recall that $u=\{\pi\}$ where $\pi=(\pi_0,\pi_1,\hdots)$ with $\vp(\pi_n)=1/q^{n-1}(q-1)$ for $n \geq 1$, so that $\ve(\pi)=q/(q-1)$. We have $\phi^j(u)=[\pi^{p^j}]+\sum_{k \geq 1} p^k [u_{k,j}]$ where $\ve(u_{k,j}) > 0$, so that if $r \geq r_F$, then $\phi^j(u) / [\pi^{p^j}]$ is a unit of $\atdag{,r}$ and the proposition follows.
\end{proof}

Note that a better estimate on the $\ve(u_{k,j})$ would allow us to take a smaller value for $r_F$. Let $s_n = p^{n-h}(q-1)$ and let $r_n=p^{n-1}(p-1)$ (so that $s_n \cdot q/(q-1) = r_n \cdot p/(p-1)$).

\begin{prop}\label{embti}
If $n \geq h$, and if $f(Y) \in \calR^{[s_n;s_n]} (Y)$, then $f(u,\hdots,\phi^{h-1}(u))$ converges in $\bt^{[r_n;r_n]}$.
\end{prop}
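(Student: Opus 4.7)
The plan is a direct valuation computation: show that the $V(\cdot,r_n)$-valuation of each monomial $a_m \prod_j \phi^j(u)^{m_j}$ matches, up to the constant factor $p^n$, the weight $\vp(a_m) + w(m)/s_n$ that controls membership in $\calR^{[s_n;s_n]}(Y)$. Since $\bt^{[r_n;r_n]}$ is complete for $V(\cdot,r_n)$, this identification will immediately yield convergence.

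First I would compute $V(\cdot,r_n)$ on $F$-scalars. If $a \in F$ has Witt-vector expansion $a = \sum_k p^k[\alpha_k]$ with $\alpha_k \in \FF_q \subset \etplus$, then each nonzero $\alpha_k$ has $\ve(\alpha_k) = 0$, so the formula $V(x,r) = \inf_k(\ve(x_k) + krp/(p-1))$ collapses to $V(a,r_n) = \vp(a) \cdot r_n p/(p-1) = \vp(a)\cdot p^n$, using $r_n p/(p-1) = p^n$. Next, since $n \geq h$ guarantees $r_n \geq r_F$, Proposition \ref{valphiu} applies and yields $V(\phi^j(u)^{m_j}, r_n) = m_j p^j q/(q-1)$ for every $0 \leq j \leq h-1$ and every $m_j \in \ZZ$; negative exponents are legitimate because the proof of that proposition factors $\phi^j(u) = [\pi^{p^j}] \cdot v$ with $v$ a unit in the relevant Robba-type ring.

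Combining these via the additivity of $V$ on products, and using the identity $s_n q/(q-1) = r_n p/(p-1) = p^n$ recorded just before the proposition, the general term of $f(u,\phi(u),\ldots,\phi^{h-1}(u))$ has valuation
\[ V\bigl(a_m \textstyle\prod_{j=0}^{h-1} \phi^j(u)^{m_j}, r_n\bigr) = p^n \vp(a_m) + w(m) \cdot q/(q-1) = p^n \bigl(\vp(a_m) + w(m)/s_n\bigr). \]
The hypothesis $f \in \calR^{[s_n;s_n]}(Y)$ is exactly that $\vp(a_m) + w(m)/s_n \to +\infty$ as $|m| \to \infty$, so the general term tends to $0$ in the $V(\cdot, r_n)$-topology, and the series converges in the complete ring $\bt^{[r_n;r_n]}$.

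There is no real obstacle: the whole statement reduces to the numerical coincidence $s_n q/(q-1) = r_n p/(p-1)$, which is precisely why $s_n$ and $r_n$ were chosen as they were. The only minor care needed is the Witt-vector computation giving the formula for $V$ on $F$-scalars and the extension of Proposition \ref{valphiu} to negative $m_j$, both of which are immediate. In fact the argument shows something slightly stronger, namely that evaluation at $(u,\phi(u),\ldots,\phi^{h-1}(u))$ is an isometry (up to the scalar $p^n$) from $\calR^{[s_n;s_n]}(Y)$ equipped with $W(\cdot,s_n)$ into $\bt^{[r_n;r_n]}$ equipped with $V(\cdot,r_n)$.
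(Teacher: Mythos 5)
Your argument is correct and follows essentially the same route as the paper: invoke Proposition \ref{valphiu} (valid since $n \geq h$ gives $r_n \geq r_F$), use the identity $s_n\,q/(q-1) = r_n\,p/(p-1) = p^n$ to translate the growth condition on $\vp(a_m) + w(m)/s_n$ into $V(a_m u^{m_0}\cdots\phi^{h-1}(u)^{m_{h-1}}, r_n) \to +\infty$, and conclude by completeness of $\bt^{[r_n;r_n]}$. You merely make explicit the scalar computation $V(a,r_n) = \vp(a)\,p^n$ that the paper leaves implicit (and note that only the submultiplicativity inequality $V(xy,r) \geq V(x,r)+V(y,r)$ is actually needed, so the "isometry" remark is an optional extra).
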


\begin{proof}
If $f(Y) = \sum_{m\in \ZZ^h} a_m Y^m \in \calR^{[s_n;s_n]} (Y)$, then $\vp(a_m) + w(m) / (p^{n-h}(q-1)) \to +\infty$. If $n \geq h$, then $r_n > r_F$ so that $V(\phi^j(u)^{m_j},r) = m_j \cdot p^j \cdot q/(q-1)$ for $0 \leq j \leq h-1$ by proposition \ref{valphiu}, and then
\[ V(a_{m_0,\hdots,m_{h-1}} u^{m_0} \cdots \phi^{h-1}(u)^{m_{h-1}},r_n) \to +\infty. \]
The series $f(u,\hdots,\phi^{h-1}(u))$ therefore converges in $\bt^{[r_n;r_n]}$.
\end{proof}

\begin{coro}\label{embtplus}
If $n \geq h$, and if $f(Y) \in \calR^{[0;s_n]} (Y)$, then $f(u,\hdots,\phi^{h-1}(u))$ converges in $\bt^{[0;r_n]}$. If $f(Y) \in \calR^+ (Y)$, then $f(u,\hdots,\phi^{h-1}(u))$ converges in $\btrigplus$.
\end{coro}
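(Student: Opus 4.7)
The plan is to reduce both assertions to proposition \ref{embti} via cofinality. For the second, since $s_n \nearrow +\infty$ we have $\calR^+(Y) = \bigcap_{n \geq h} \calR^{[0;s_n]}(Y)$, and correspondingly $\btrigplus = \bigcap_{n \geq h} \bt^{[0;r_n]}$ realizes the Fr\'echet completion as an intersection along a cofinal sequence of radii; thus it reduces to the first statement applied at each level $n \geq h$.

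For the first assertion, fix $n \geq h$ and $f = \sum_{m \geq 0} a_m Y^m \in \calR^{[0;s_n]}(Y)$, where the multi-indices are forced to be non-negative because $0 \in [0;s_n]$. The inclusion $\calR^{[0;s_n]}(Y) \hookrightarrow \calR^{[s_n;s_n]}(Y)$ combined with proposition \ref{embti} already yields convergence of $f(u,\phi(u),\hdots,\phi^{h-1}(u))$ in $\bt^{[r_n;r_n]}$. At the same time, every monomial $u^{m_0}\phi(u)^{m_1}\cdots\phi^{h-1}(u)^{m_{h-1}}$ lies in $\atplus$ (since each $\phi^j(u) \in \atplus$), so the partial sums of the series all lie in $\btplus$: this is the "integrality at the center of the disk" that will complement the boundary-circle convergence.

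To conclude convergence in $\bt^{[0;r_n]}$, I would use the concavity of $r \mapsto V(x,r)$, which is an infimum of affine functions of $r$ (one per Witt component) and hence attains its minimum on the closed interval $[0, r_n]$ at an endpoint. The right endpoint $r = r_n$ is controlled by proposition \ref{embti}, and the left endpoint $r = 0$ by integrality, via the positivity $V(\phi^j(u), 0) > 0$ that should follow from the explicit construction of $u = \{\pi\}$ by Colmez's recursion (the fact that $[p](T) = pT + T^q$ has integer coefficients forces the Witt components of $\phi^j(u)$ to have uniformly positive $\ve$-valuation). The main technical hurdle is precisely this inner-endpoint positivity; once it is in place, the rest is a direct cofinality-and-concavity extension of proposition \ref{embti}.
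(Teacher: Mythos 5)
Your overall strategy is the paper's: note that each term of $f(u,\hdots,\phi^{h-1}(u))$ lies in $\btplus$ (nonnegative exponents, coefficients in $F$, $\phi^j(u)\in\atplus$), get decay of the terms on the circle $r=r_n$ from proposition \ref{embti}, deduce decay on all of $[0;r_n]$, and obtain the second assertion by passing to the limit over $n$. The gap is in how you handle the inner endpoint. Concavity of $r\mapsto V(x,r)$ only reduces the infimum over $[0;r_n]$ to the two endpoints, and your argument then genuinely needs $V(\phi^j(u),0)>0$, i.e.\ a \emph{uniform} positive lower bound on the $\ve$-valuations of all Witt components of $\phi^j(u)$. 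You flag this as "the main technical hurdle" and do not prove it, and nothing in the paper supplies it: the proof of proposition \ref{valphiu} only records $\ve(u_{k,j})>0$ for each individual $k$, with no uniformity in $k$, and Colmez's construction of $u=\{\pi\}$ as a limit of $\Zp$-polynomials evaluated at Teichm\"uller lifts of $q^n$-th roots naively yields lower bounds that degrade (like $\ve(\overline{u})/q^{n}$) as one goes up in the Witt coordinates. As written, this step is an unproven claim, not a routine verification.

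Moreover the hurdle is avoidable, and this is exactly where the paper's one-line proof differs: it invokes the maximum modulus principle (corollary 2.20 of \cite{LB2}), which says that for an element of $\btplus$ the valuation $V(\cdot,[0;r_n])$ is computed at the outer boundary $r_n$ alone. The point is that for $x\in\btplus$ the component functions are \emph{monotone} on the interval, not merely concave, precisely because the Witt components satisfy $\ve\geq 0$; so membership of each term in $\btplus$ (which you established, and which needs only $\ve\geq 0$, not $\ve>0$) together with proposition \ref{embti} already gives convergence in $\bt^{[0;r_n]}$, with no estimate at the center required. Note also that the displayed normalization $V(x,r)=\inf_k(\ve(x_k)+krp/(p-1))$ degenerates at $r=0$ (it no longer detects powers of $p$, e.g.\ $V(p,0)=0$), so $r=0$ is not the right place to carry an estimate; this degeneration is what manufactures your spurious inner-endpoint condition. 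Replace the concavity-plus-positivity step by the maximum modulus principle and your argument becomes the paper's proof.
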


\begin{proof}
If $f \in \calR^{[0;s_n]} (Y)$, then each term of the series $f(u,\hdots,\phi^{h-1}(u))$ belongs to $\btplus$ so that it converges in $\bt^{[0;r_n]}$ by the maximum modulus principle (corollary 2.20 of \cite{LB2}). The second assertion follows by passing to the limit.
\end{proof}

The image of $\log_{\LT}(Y_0) \cdots \log_{\LT}(Y_{h-1})$ in $\btrigplus \subset \bdr^+$ is $a \cdot t$ with $a \in \Qp$, as we have seen above. We henceforth denote by $t$ the element of $\calR^+(Y)$ whose image in $\btrigplus$ is $t$, that is $t = \log_{\LT}(Y_0) \cdots \log_{\LT}(Y_{h-1})/a$. In the following proposition, we determine the valuation of $a$ (this is not used in the rest of this article).

\begin{prop}\label{imgt}
In the ring $\bdr^+$, the product $\log_{\LT}(u) \cdots \log_{\LT}(\phi^{h-1}(u))$ belongs to $p^{h-1} \cdot \Zp^\times \cdot t$, where $t$ is the usual $t$ of $p$-adic Hodge theory.
\end{prop}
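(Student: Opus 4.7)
The plan is to first use Galois- and Frobenius-equivariance to reduce the problem to the computation of a $\Qp$-scalar, and then pin that scalar down using the valuation $V(\cdot, r_F)$ on $\btrigplus$.

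Set $\alpha := \prod_{j=0}^{h-1}\phi^j(t_F)$ where $t_F = \log_{\LT}(u)$. Since $[a](T) \in \OO_F\dcroc{T}$ and $\phi$ restricts to $\sigma$ on $F \subset \btrigplus$, one has $\phi^j(g(u)) = \phi^j([\chilt(g)](u)) = [\sigma^j\chilt(g)](\phi^j(u))$, and hence $g(\phi^j(t_F)) = \sigma^j(\chilt(g))\cdot\phi^j(t_F)$. Taking the product over $j$ and using the reciprocity identity $\Nm_{F/\Qp}\circ\chilt = \chi_{\cyc}$ gives $g(\alpha) = \chi_{\cyc}(g)\alpha$. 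Since $\phi^h(t_F) = \log_{\LT}([p](u)) = pt_F$, applying $\phi$ cyclically also yields $\phi(\alpha) = p\alpha$. Because $t$ satisfies the analogous relations $\phi(t) = pt$ and $g(t) = \chi_{\cyc}(g)t$, the ratio $\alpha/t \in \btrigplus[1/t] \hookrightarrow \bdr$ is both $\phi$- and $G_F$-invariant. Galois invariance forces $\alpha/t \in \bdr^{G_F} = F$, and since $\phi|_F = \sigma$, Frobenius invariance then yields $\alpha/t \in F^{\sigma=1} = \Qp$. Write $\alpha = c \cdot t$ with $c \in \Qp$.

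To compute $v_p(c)$, I would apply $V(\cdot, r_F)$. Proposition~\ref{valphiu} gives $V(\phi^j(u), r_F) = p^j q/(q-1)$; combined with the product formula $\log_{\LT}(T) = T\prod_{k\geq 1}(1 + [p^{k-1}](T)^{q-1}/p)$ and the identity $V(p, r_F) = p^{h-1}q/(q-1)$, a direct check shows that each factor $(1 + \phi_q^{k-1}(\phi^j(u))^{q-1}/p)$ has $V = 0$, so $V(\phi^j(t_F), r_F) = p^j q/(q-1)$ and
\[ V(\alpha, r_F) = \sum_{j=0}^{h-1} p^j q/(q-1) = q/(p-1). \]
For $t = \log([\epsilon]) = \sum_{n\geq 1}(-1)^{n-1}([\epsilon]-1)^n/n$, with $V([\epsilon]-1, r_F) = p/(p-1)$, the dominant contributions come from $n = p^j$ and have $V$-valuation $V_j := (p/(p-1))(p^j - jr_F)$. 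The relation $V_j - V_{j+1} = (p/(p-1))(r_F - p^j(p-1))$, together with $r_F = p^{h-2}(p-1)q/(q-1)$, shows that $V_j$ is decreasing for $j \leq h-2$ and increasing for $j \geq h-1$; hence the infimum is attained at $j = h-1$ and $V(t, r_F) = V_{h-1} = q/(p-1) - (h-1)p^{h-1}q/(q-1)$.

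Subtracting, $V(c, r_F) = V(\alpha, r_F) - V(t, r_F) = (h-1)p^{h-1}q/(q-1) = V(p^{h-1}, r_F)$. Since $V(c, r_F) = v_p(c)\cdot p^{h-1}q/(q-1)$ for $c \in \Qp$, this forces $v_p(c) = h-1$; finiteness of $V(\alpha, r_F)$ ensures $c \neq 0$, so $c \in p^{h-1}\Zp^\times$. The main delicate point in this approach is verifying the absence of hidden cancellations in the $V(\cdot, r_F)$-computations for $t_F$ and $t$, but these all reduce to the elementary inequality $p^{h-1}(p-1) \geq 1$, with equality only in the trivial case $p=2$, $h=1$ where the proposition is classical.
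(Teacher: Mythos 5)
Your proof is correct, and while the first half coincides with what the paper establishes beforehand (the $\Gamma_F$- and $\phi$-equivariance of $\alpha=\prod_j\log_{\LT}(\phi^j(u))$ together with $\bdr^{G_F}=F$ and $F^{\sigma=1}=\Qp$ force $\alpha=c\cdot t$ with $c\in\Qp$; this is the "we have seen that" of the paper), your computation of $\vp(c)$ takes a genuinely different route. The paper applies $\theta$ and works in $\Cp$: from $\log_{\LT}(u)=u\prod_{k\geq 1}Q_k(u)/p$ and $t=([\varepsilon]-1)\prod_{k\geq 1}Q_k^{\cyc}([\varepsilon]-1)/p$ it gets $\theta(t/\log_{\LT}(u))=\theta(([\varepsilon]-1)/u)$, evaluates this as $1/(p-1)-1/(q-1)$ by comparing the two generators $([\varepsilon]-1)/\phi^{-1}([\varepsilon]-1)$ and $u/\phi_q^{-1}(u)$ of $\ker\theta$, and adds $\vp(\theta(\log_{\LT}(\phi^j(u))))=1+p^j/(q-1)$ for $1\leq j\leq h-1$. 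You instead never apply $\theta$ and compute the Gauss norm $V(\cdot,r_F)$ of both sides of $\alpha=c\,t$, using proposition \ref{valphiu} and the product formula for $\log_{\LT}$ on one side, and a dominant-term analysis of $\log([\varepsilon])$ (minimum at $n=p^{h-1}$) on the other; this has the merit of making visible, at the level of radii, where the factor $p^{h-1}$ comes from, but it is computationally heavier and leans on two points you treat quickly: (i) exact (not merely subadditive) behaviour of $V(\cdot,r_F)$ on the powers and products involved — either invoke multiplicativity of these Gauss norms, or argue as in proposition \ref{valphiu} that $([\varepsilon]-1)/[\varepsilon-1]$ and each factor $Q_k(\phi^j(u))/p=1+\phi^{h(k-1)+j}(u)^{q-1}/p$ is a $V$-preserving unit at $r_F$ (the latter uses $V(\phi^m(u)^{q-1}/p,r_F)>0$, which is exactly your inequality $p^{h-1}(p-1)\geq 1$ plus an easy induction for $m\geq h$, where proposition \ref{valphiu} no longer applies directly); and (ii) uniqueness of the minimizing term for $t$, where the only degenerate case $(p,h)=(2,1)$ is indeed harmless since then $\LT_1=\Gm$ and the statement is immediate. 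The paper's $\theta$-argument avoids both issues at the cost of the slightly less transparent comparison of generators of $\ker\theta$; both computations land on $\vp(c)=h-1$.
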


\begin{proof}
We have seen that $\log_{\LT}(u) \cdots \log_{\LT}(\phi^{h-1}(u)) = a \cdot t$ with $a \in \Qp$, and we now compute $\vp(a)$. We have $\log_{\LT}(u) = u \cdot \prod_{k \geq 1} Q_k(u)/p$ and likewise, if $\pi=[\varepsilon]-1$, then $t=\pi \cdot \prod_{k \geq 1} Q^{\cyc}_k(\pi)/p$. This implies that $\theta(t/\log_{\LT}(u)) = \theta(\pi/u)$. Since both $\pi/\phi^{-1}(\pi)$ and $u/\phi_q^{-1}(u)$ are generators of $\ker(\theta)$ in $\atplus$, we have $\vp(\theta(t/\log_{\LT}(u))) = 1/(p-1) - 1/(q-1)$. On the other hand, $\vp(\theta \circ \phi^j(u)) = \vp(\lim_{n \to \infty} [p^n](\pi_n^{p^j})) = 1+ p^j/(q-1)$ if $1 \leq j \leq h-1$, so that $\vp(\theta(\log_{\LT}(\phi^j(u)))) = 1+ p^j/(q-1)$. This implies that $\vp(a)=\vp(\theta(a))=h-1$, and hence the proposition.
\end{proof}

\begin{defi}\label{defiotan}
Let $\iota_n : \calR^{[s_n;s_n]} (Y) \to \bdr^+$ be the compositum of the map defined above, with the map $\phi^{-n} : \bt^{[r_n;r_n]} \to \bt^{[r_0;r_0]}$ and the map $\bt^{[r_0;r_0]} \subset \bdr^+$ defined in \S 2.2 of \cite{LB2}.
\end{defi}

It follows from the definition as well as the formulas for $\phi$ and the action of $\Gamma_F$ on $\calR^I (Y)$ that $\iota_{n+1}(\phi(f)) = \iota_n(f)$ when applicable, and that $g(\iota_n(f)) = \iota_n(g(f))$ if $g \in G_F$. Since $\iota_n(t) = p^{-n} t$, we can extend $\iota_n$ to $\iota_n : \calR^{[s_n;s_n]} (Y)[1/t] \to \bdr$.

\begin{theo}\label{embdr}
If $n \geq h$, if $f \in \calR^{[s_n;s_n]} (Y)$, and if $n=hk+i$ with $0 \leq i \leq h-1$, then we have $\iota_n(f) \in \Fil^1 \bdr^+$ if and only if $f \in Q_k(Y_i) \cdot \calR^{[s_n;s_n]} (Y)$.
\end{theo}

\begin{proof}
Recall that $u =\{ (\pi_0, \pi_1, \hdots) \} \in \atplus$. If $m \geq 1$ and $u_m = \theta(\phi^{-m}(u)) \in \hat{F}_\infty$, then $g(u_m) = [\sigma^{-m}(g)](u_m)$. Note that if $m=h\ell$, then $u_m = \theta(\phi_q^{-\ell}(u)) = \pi_\ell$. The theorem is equivalent to the assertion that $f^{\sigma^{-n}}(u_n,\hdots,u_{n-h+1})=0$ in $\Cp$ if and only if $f \in Q_k(Y_i) \cdot \calR^{[s_n;s_n]} (Y)$. We have $u_{n-i} = \pi_k$ so that if $f$ belongs to $Q_k(Y_i) \cdot \calR^{[s_n;s_n]} (Y)$, then $f^{\sigma^{-n}}(u_n,\hdots,u_{n-h+1})=0$.

Since $Q_k(T)$ is a monic polynomial of degree $d=q^{k-1}(q-1)$, whose nonleading coefficients are divisible by $p$, we can use proposition \ref{weierdivcirc} to write $f^{\sigma^{-n}} =f_0 + Y_i f_1 + \cdots + Y_i^{d-1} f_{d-1} + Q_k(Y_i) r$ with $f_i$ a power series in the $Y_j$'s with $j \neq i$. Proposition \ref{injeval} applied to $f_0 + \pi_k f_1 + \cdots + \pi_k^{d-1} f_{d-1}$, with the $T_j$'s a suitable permutation of the $Y_j$'s, shows that $f_0 + \pi_k f_1 + \cdots + \pi_k^{d-1} f_{d-1}=0$ . Therefore, $f  = Q_k(Y_i) r^{\sigma^n}$, which proves the theorem.
\end{proof}

\begin{coro}\label{iotaninj}
If $n \geq h$, then the map $\iota_n : \calR^{[s_n;s_n]} (Y) \to \bdr^+$ is injective. If $n \in \ZZ$, then the map $\iota_n : \calR^+ (Y) \to \bdr^+$ is injective.
\end{coro}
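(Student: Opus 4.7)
I would first reduce both claims to the injectivity of $\iota_n$ on $\calR^{[s_n;s_n]}(Y)$ for $n \geq h$. The restriction $\calR^+(Y) \hookrightarrow \calR^{[s_n;s_n]}(Y)$ obtained by expanding a convergent power series on the polycircle is injective, so the statement for $\calR^+(Y)$ with $n \geq h$ is immediate. For arbitrary $n \in \ZZ$, iterating the compatibility $\iota_{n+1} \circ \phi = \iota_n$ noted just after Definition \ref{defiotan} gives $\iota_n = \iota_{n+kh} \circ \phi_q^k$; choosing $k$ so that $n+kh \geq h$ and using that $\phi_q$ is injective on $\calR^+(Y)$ finishes the reduction.

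For the main case, suppose $f \in \calR^{[s_n;s_n]}(Y)$ satisfies $\iota_n(f) = 0$, so $\iota_n(f) \in \Fil^M \bdr^+$ for every $M \geq 1$. Writing $n = hk + i$, the key input is that $\iota_n(Q_k(Y_i))$ is a uniformizer of $\bdr^+$: we have $\iota_n(Y_i) = \phi_q^{-k}(u)$ with $\theta(\phi_q^{-k}(u)) = \pi_k$, and the factorization $Q_k(T) = \prod_\alpha (T - \alpha)$ over the Galois conjugates of $\pi_k$ yields
\[ \iota_n(Q_k(Y_i)) = \bigl( \phi_q^{-k}(u) - \pi_k \bigr) \cdot \prod_{\alpha \neq \pi_k} \bigl( \phi_q^{-k}(u) - \alpha \bigr), \]
where the right-hand product is a unit in $\bdr^+$ with $\theta$-image $Q_k'(\pi_k) \neq 0$, and the first factor is a uniformizer of $\bdr^+$ by the standard theory of Lubin-Tate periods (\S 9 of \cite{C02}). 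An inductive application of Theorem \ref{embdr}, using at each step that $\iota_n(Q_k(Y_i))^M$ generates $\Fil^M \bdr^+$, then produces $f \in Q_k(Y_i)^M \calR^{[s_n;s_n]}(Y)$ for every $M$.

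What remains, and is the genuine obstacle, is to prove that $\bigcap_{M \geq 1} Q_k(Y_i)^M \calR^{[s_n;s_n]}(Y) = 0$. My plan is to argue pointwise on the polycircle $|Y_j| = p^{-p^j/s_n}$. A direct computation on the leading term gives $W(Q_k(Y_i), s_n) = 1$, so multiplicativity of the Gauss norm forces $|f_M|_{\sup} \leq p^M |f|_{\sup}$ whenever $f = Q_k(Y_i)^M f_M$, and hence
\[ \bigl| f(\xi_0,\ldots,\xi_{h-1}) \bigr| \leq \bigl( p\,|Q_k(\xi_i)| \bigr)^M \cdot |f|_{\sup}. \]
Near any root $\alpha$ of $Q_k$ on the $Y_i$-circle we have $|Q_k(\xi_i)| \sim |Q_k'(\alpha)|\,|\xi_i - \alpha|$, so there is an admissible open subset of the $Y_i$-circle on which $p\,|Q_k(\xi_i)| < 1$; letting $M \to \infty$ forces $f(\xi) = 0$ on that open set. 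Fixing the $\xi_j$ for $j \neq i$ on their circles, the one-variable identity principle applied to $Y_i \mapsto f(\xi_0,\ldots,Y_i,\ldots,\xi_{h-1})$ propagates the vanishing to the entire $Y_i$-circle; letting the $\xi_j$ vary, together with the uniqueness of multi-variable Laurent expansions, then yields $f = 0$.
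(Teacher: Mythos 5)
Your proposal is correct and follows the paper's own route: the paper deduces the first assertion by (implicitly) iterating Theorem \ref{embdr}, using that $\iota_n(Q_k(Y_i))$ is a nonzero element of $\Fil^1\bdr^+$, and deduces the second assertion from the compatibility $\iota_{n+1}\circ\phi=\iota_n$, exactly as you do. The only step the paper leaves unsaid is the separatedness claim $\bigcap_{M\geq 1} Q_k(Y_i)^M\,\calR^{[s_n;s_n]}(Y)=0$, which you establish by hand via Gauss/sup norms and the identity principle; that argument is fine, though it also follows in one line from Krull's intersection theorem, since $\calR^{[s_n;s_n]}(Y)$ is a Noetherian integral domain (an affinoid algebra) in which $Q_k(Y_i)$ is not a unit.
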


\begin{proof}
The first assertion follows from theorem \ref{embdr}. The second follows from that, and from the fact that $\iota_{n+1}(\phi(f)) = \iota_n(f)$ for the other $n$.
\end{proof}

\section{$(\phi_q,\Gamma_F)$-modules in one variable}\label{kisinren}

Before constructing $(\phi_q,\Gamma_F)$-modules over $\calR(Y)$, we review Kisin and Ren's construction of $(\phi_q,\Gamma_F)$-modules in one variable and explain why we need rings in several variables.

Let $Y_0$ be the variable of \S\ref{rps}, and let $\calE(Y_0)$ be Fontaine's field of \cite{F90} with coefficients in $F$, that is $\calE(Y_0) = \calO_{\calE}(Y_0)[1/p]$ where $\calO_{\calE}(Y_0)$ is the $p$-adic completion of $\OO_F\dcroc{Y_0}[1/Y_0]$. We let $\calEdag(Y_0)$ and $\calR(Y_0)$ denote the corresponding overconvergent and Robba rings. If $I$ is a subinterval of $[0;+\infty]$, then we denote as above by $\calR^I(Y_0)$ the set of power series $f(Y_0) = \sum_{m \in \ZZ} a_m Y_0^m$ that belong to $\calR^I(Y_0,\hdots,Y_{h-1})$ via the natural inclusion. 

If $K/F$ is a finite extension, then by the theory of the field of norms (see \cite{FW} and \cite{WCN}), there corresponds to it a finite extension $\calE_K(Y_0)$ of $\calE(Y_0)$, of degree $[K_\infty:F_\infty]$. A $(\phi_q,\Gamma_K)$-module over $\calE_K(Y_0)$ is a finite dimensional $\calE_K(Y_0)$-vector space $\dfont$, along with a semilinear $\phi_q$ and a compatible action of $\Gamma_K$. We say that $\dfont$ is \'etale if $\dfont = \calE_K(Y_0) \otimes_{\calO_{\calE_K}(Y_0)} \dfont_0$ where $\dfont_0$ is a $(\phi_q,\Gamma_K)$-module over $\calO_{\calE_K}(Y_0)$. By specializing the constructions of \cite{F90}, Kisin and Ren prove the following theorem in their paper (theorem 1.6 of \cite{KR09}).

\begin{theo}\label{fontkisren}
The functors 
\[ V \mapsto (\hat{\calE}(Y_0)^{\unr} \otimes_F V)^{H_K} \text{ and } \dfont \mapsto (\hat{\calE}(Y_0)^{\unr} \otimes_{\calE_K(Y_0)} \dfont)^{\phi_q=1} \] 
give rise to mutually inverse equivalences of categories between the category of $F$-linear representations of $G_K$ and the category of \'etale $(\phi_q,\Gamma_K)$-modules over $\calE_K(Y_0)$.
\end{theo}

We say that an $F$-linear representation of $G_K$ is $F$-analytic if it is Hodge-Tate with weights $0$ (i.e.\ $\Cp$-admissible) at all embeddings $\tau \neq \Id$. Kisin and Ren then go on to show that if $K \subset F_\infty$, and if $V$ is a crystalline representation of $G_K$, that is $F$-analytic, then the $(\phi_q,\Gamma_K)$-module attached to $V$ is overconvergent (see \S 3.3 of ibid.).

Assume from now on that $K \subset F_\infty$, so that $\calE_K(Y_0) = \calE(Y_0)$. If $\dfont$ is a $(\phi_q,\Gamma_K)$-module over $\calR(Y_0)$, and if $g \in \Gamma_K$ is close enough to $1$, then by standard arguments (see \S 4.1 of \cite{LB2} or \S 2.1 of \cite{KR09}), the series $\log(g) = \log(1+(g-1))$ gives rise to a differential operator $\nabla_g : \dfont \to \dfont$. The map $\mathrm{Lie}\,\Gamma_F \to \End(\dfont)$ arising from $v \mapsto \nabla_{\exp(v)}$ is $\Qp$-linear, and we say that $\dfont$ is $F$-analytic if this map is $F$-linear (see \S 2.1 of \cite{KR09} and \S 1.3 of \cite{FX12}). This is equivalent to the requirement that $\nabla_j=0$ on $\dfont$ for $1 \leq j \leq h-1$, where $\nabla_j$ is the partial derivative in the direction $\sigma^j$.

\begin{theo}\label{mex}
If $V$ is an overconvergent $F$-linear representation of $G_K$, and if $\dfont(V) = \calR(Y_0) \otimes_{\calEdag(Y_0)} \ddag{}(V)$, then $\dfont(V)$ is $F$-analytic if and only if $V$ is $F$-analytic.
\end{theo}

\begin{proof}
Choose $1 \leq j \leq h-1$, and take $n \gg 0$ such that $n = -j \bmod{h}$. By proposition \ref{embti}, we have a map $\theta \circ \phi^{-n} : \calR^{[s_n;s_n]}(Y_0) \to \bdr^+ \to \Cp$, giving rise to an isomorphism 
\[ \Cp \otimes^{\theta \circ \phi^{-n}}_{\calR^{[s_n;s_n]}(Y_0)} \dfont^{[s_n;s_n]}(V) \to \Cp \otimes^{\sigma^j}_F V. \]

We first prove that if $\dfont(V)$ is $F$-analytic, then $V$ is $\Cp$-admissible at the embedding $\sigma^j$. Let $\hat{F}_\infty^{(j)}$ denote the field of locally $\sigma^j$-analytic vectors of $\hat{F}_\infty$ for the action of $\Gamma_K$. Note that $\theta \circ \phi^{-n}( \calR^{[s_n;s_n]}(Y_0) ) \subset \hat{F}_\infty^{(j)}$. Let $\dsen^{(j)}(V)$ be the $\hat{F}_\infty^{(j)}$-vector space 
\[ \dsen^{(j)}(V) = \hat{F}_\infty^{(j)} \otimes_{\theta \circ \phi^{-n}( \calR^{[s_n;s_n]}(Y_0) )} \theta \circ \phi^{-n}( \dfont^{[s_n;s_n]}(V) ). \]
It is of dimension $d$, its image in $(\Cp \otimes^{\sigma^j}_F V)^{H_F}$ generates $\Cp \otimes^{\sigma^j}_F V$, and its elements are all locally $\sigma^j$-analytic vectors of $(\Cp \otimes^{\sigma^j}_F V)^{H_F}$ because $\dfont(V)$ is $F$-analytic and $\phi^{-n} \circ \nabla_j = \nabla_0 \circ \phi^{-n}$. If $y \in \dsen^{(j)}(V)$, then $(g(y)-y) / (\sigma^j \circ \chilt(g)-1)$ has a limit as $g \to 1$, and we call $\nabla_j(y)$ this limit. We then have $g(y) = \exp(\log_p (\sigma^j \circ \chilt(g)) \cdot \nabla_j)(y)$ if $g \in \Gamma_K$ is close to $1$. 

Recall that there exists $a_j \in \Cp$ such that $\log_p(\sigma^j \circ \chilt(g)) = g(a_j)-a_j$. For example, one can take $a_j = \log_p (\theta \circ \iota_0(t_j))$. The element $a_j$ then belongs to $\hat{F}_\infty^{(j)}$ for obvious reasons and satisfies $\nabla_j(a_j)=1$. Take $y \in \dsen^{(j)}(V)$, and choose $a_{j,0} \in F_\infty$  such that $|a_j-a_{j,0}|_p$ is small enough. The series
\[ C(y) = \sum_{k \geq 0} (-1)^k \frac{(a_j-a_{j,0})^k}{k!} \nabla_j^k(y) \]
then converges for the topology of $\dsen^{(j)}(V)$ (the technical details concerning convergence in such spaces of locally analytic vectors can be found in \cite{STLAV}) and a short computation shows that $\nabla_j(C(y)) = 0$, so that $C(y) \in (\Cp \otimes^{\sigma^j}_F V)^{G_{F_n}}$ for some $n = n(y) \gg 0$. In addition, $n(y)=n(\nabla_j^k(y))$ for $k \geq 0$, the series for $C(\nabla_j^k(y))$ also converges for the topology of $\dsen^{(j)}(V)$, and $y= \sum_{k \geq 0} (a_j-a_{j,0})^k/k! \cdot C(\nabla_j^k(y))$.

If $y_1,\hdots,y_d$ is a basis of $\dsen^{(j)}(V)$, and if $n \geq \max n(y_i)$, then the above computations show that the elements $y_i$ belong to $\hat{F}_\infty^{(j)} \otimes_{F_n} (\Cp \otimes^{\sigma^j}_F V)^{G_{F_n}}$, so that $(\Cp \otimes^{\sigma^j}_F V)^{G_{F_n}}$ generates $(\Cp \otimes^{\sigma^j}_F V)^{H_F}$. This implies that $V$ is $\Cp$-admissible at the embedding $\sigma^j$. This is true for all $1 \leq j \leq h-1$, and therefore $V$ is $F$-analytic.

We now prove that if $V$ is $\Cp$-admissible at the embedding $\sigma^j$, then $\nabla_j = 0$ on $\dfont(V)$. Choose $n=hm-j$ with $m \gg 0$. Since $j \neq 0 \bmod{h}$, the map $\theta \circ \phi^{-n} : \calR^{[s_n;s_n]}(Y_0) \to \Cp$ is injective by theorem \ref{embdr}. This implies that the map 
\[ \dfont^{[s_n;s_n]}(V) \to  \Cp \otimes^{\theta \circ \phi^{-n}}_{\calR^{[s_n;s_n]}(Y_0)} \dfont^{[s_n;s_n]}(V) \] is injective, and hence the map $\dfont^{[s_n;s_n]}(V) \to \Cp \otimes_F^{\sigma^j} V$ is also injective. Therefore, we have an injection $\dfont^{[s_n;s_n]}(V) \to ((\Cp \otimes_F^{\sigma^j} V)^{H_F})^{\an}$ where $((\Cp \otimes_F^{\sigma^j} V)^{H_F})^{\an}$ denotes the set of locally $\Qp$-analytic vectors of $(\Cp \otimes_F^{\sigma^j} V)^{H_F}$. If $V$ is $\Cp$-admissible at the embedding $\sigma^j$, then $((\Cp \otimes_F^{\sigma^j} V)^{H_F})^{\an} = (\hat{F}^{\an}_\infty)^d$. One of the main results of \cite{STLAV} is that $\nabla_0 = 0$  on $\hat{F}^{\an}_\infty$ (it is shown in \cite{STLAV} that, in a suitable sense, $\hat{F}^{\an}_\infty$ is generated by $F_\infty$ and the elements $a_1,\hdots,a_{h-1}$). This implies that $\nabla_j = 0$ on $\dfont^{[s_n;s_n]}(V)$, since $\phi^{-n} \circ \nabla_j = \nabla_0 \circ \phi^{-n}$.
\end{proof}

Note that an analogous argument for the proof of the implication ``$\dfont(V)$ is $F$-analytic implies $V$ is $F$-analytic'' in certain cases was given by Bingyong Xie.

\begin{coro}\label{colmabsirr}
If $V$ is an absolutely irreducible $F$-linear overconvergent representation of $G_K$, then there exists a character $\delta$ of $\Gamma_K$ such that $V \otimes \delta$ is $F$-analytic.
\end{coro}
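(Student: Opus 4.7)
The plan is to use theorem \ref{mex} to translate the problem into one about $\dfont(V)$: it suffices to find a locally analytic character $\delta:\Gamma_K \to F^\times$ such that, on the twisted module $\dfont(V \otimes \delta) = \dfont(V) \otimes \dfont(\delta)$, the derivations $\nabla_j$ vanish for all $1 \leq j \leq h-1$.

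First I would observe that for $j \geq 1$, $\nabla_j$ acts $\calR(Y_0)$-linearly on $\dfont(V)$. Indeed, on $\calR(Y_0)$ itself the action of $\Gamma_K$ is $F$-analytic: restricted to the single variable $Y_0$, lemma \ref{qpanry} shows that only the $j=0$ partial derivative is nonzero. The Leibniz rule then makes each $\nabla_j$ with $j \geq 1$ a $\calR(Y_0)$-linear operator on $\dfont(V)$. Because $\Gamma_K \simeq \OO_F^\times$ is abelian, $\nabla_j$ commutes with the $\Gamma_K$-action, and because $\phi_q$ commutes with $\Gamma_K$, it also commutes with $\nabla_j$.

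Next I would apply Schur's lemma. By the Kisin--Ren equivalence (theorem \ref{fontkisren}), combined with a boundedness argument---any $(\phi_q,\Gamma_K)$-equivariant $\calR(Y_0)$-linear endomorphism of $\dfont(V)$ preserves an \'etale lattice, hence is bounded and descends to an endomorphism of $\ddag{}(V)$ over $\calEdag(Y_0)$---the absolute irreducibility of $V$ yields that the algebra of such endomorphisms equals $F$. Hence each $\nabla_j$ is a scalar: $\nabla_j = c_j \cdot \Id$ for some $c_j \in F$, for $1 \leq j \leq h-1$.

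To conclude, I would construct a locally analytic character $\delta:\Gamma_K \to F^\times$ whose Lie algebra derivative $d\delta : F \to F$, decomposed along the embeddings $\Id,\sigma,\hdots,\sigma^{h-1}$, has $j$-th component equal to $-c_j$ for $j = 1,\hdots,h-1$. Such a $\delta$ exists because exponentiation identifies a neighbourhood of $0$ in $F$ with an open subgroup of $\Gamma_K \simeq \OO_F^\times$, and any $\Qp$-linear form $F \to F$ is thereby realised as the derivative of a locally analytic character. A direct Leibniz computation on $\dfont(V) \otimes \dfont(\delta)$ then gives $\nabla_j = 0$ on $\dfont(V\otimes \delta)$ for $j \geq 1$. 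Since $\delta$ is locally analytic (hence overconvergent) and $V$ is overconvergent, $V \otimes \delta$ is overconvergent, so theorem \ref{mex} applied in the reverse direction yields that $V \otimes \delta$ is $F$-analytic.

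The main obstacle is the Schur step: translating absolute irreducibility of $V$ as a $G_K$-representation into the statement that $\calR(Y_0)$-linear equivariant endomorphisms of $\dfont(V)$ reduce to $F$ requires combining the Kisin--Ren categorical equivalence with a boundedness argument controlling how an equivariant endomorphism of $\dfont(V)$ descends from $\calR(Y_0)$-coefficients back to the \'etale level; once this is in hand, the construction of $\delta$ is purely formal.
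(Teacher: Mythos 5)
Your argument is genuinely different from the paper's, and in outline it works. The paper never invokes Schur's lemma: it defines a new, forcibly $F$-analytic action $h \star x = \exp(\log_p(\chilt(h)) \cdot \nabla)(x)$ on the pieces $\dfont^{[r;s]}(V)$, glues them by $\phi_q$ into an \'etale $(\phi_q,\Gamma_{K_n})$-module, hence an $F$-analytic representation $W$ of $G_{K_n}$ with $W|_{H_F} \simeq V$, and then extracts $\delta$ as an eigencharacter of the abelian group $\Gamma_K$ acting on the finite-dimensional space $\Hom(V,\ind_{G_{K_n}}^{G_K}W)^{H_F}$. You instead prove that the non-analytic directions $\nabla_j$, $1 \leq j \leq h-1$, act by scalars $c_j$ and cancel them by an explicit character with weights $-c_j$. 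Your route is more direct and makes $\delta$ explicit, but its real cost is the Schur step: what you need is $\bigl(\End_{\calR(Y_0)}(\dfont(V))\bigr)^{\phi_q=1,\,\Gamma_K=1} = (\mathrm{ad}\,V)^{G_K} = F$, and the nontrivial input is that the $\phi_q$-invariants of the \'etale module $\calR(Y_0)\otimes_{\calEdag(Y_0)}\End(\ddag{}(V))$ are already bounded, i.e.\ Kedlaya's slope theory for the one-variable Lubin-Tate Robba ring, combined with theorem \ref{fontkisren}. This is available (it is the one-variable setting, not the multivariable one where the paper warns Kedlaya's methods break down), but it should be invoked as such rather than as an unspecified ``boundedness argument''. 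Note also that $c_j \in F$ uses absolute irreducibility, and that extending $\delta$ from a small open subgroup of $\Gamma_K$ to all of $\Gamma_K$ may force an extension of the coefficient field, exactly as in the paper's proof.

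One justification must be corrected: ``$\delta$ is locally analytic (hence overconvergent)'' is not a valid inference here. Every continuous character of a $p$-adic Lie group is locally $\Qp$-analytic, and whether a given object is overconvergent is precisely the delicate point in the Lubin-Tate setting (this is the content of conjecture \ref{locanalone} and of the discussion around theorem 0.6 of \cite{FX12}). Your conclusion is nonetheless correct, for a different and elementary reason: $\delta$ is a character of $\Gamma_K$, hence trivial on $H_K$, so $\dfont(\delta) = \calE_K(Y_0)\cdot e_\delta$ with $\phi_q(e_\delta)=e_\delta$ and $\gamma(e_\delta)=\delta(\gamma)\,e_\delta$; the matrices are constants, so $\delta$ (and then $V\otimes\delta$, since $V$ is overconvergent) is overconvergent, and $\nabla_j(e_\delta) = w_j\,e_\delta$ with $w_j$ the $\sigma^j$-component of $d\delta$. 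With that repair, the Leibniz computation and the reverse direction of theorem \ref{mex} go through as you describe.
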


\begin{proof}
We give a sketch of the proof. Choose some $g \in \Gamma_K$ such that $\log_p(\chilt(g)) \neq 0$, and let $\nabla = \log(g) / \log_p(\chilt(g))$. Choose $r>0$ large enough and $s \geq qr$. If $a \in \OO_F$, and if $\vp(a) \geq n$ for $n=n(r,s)$ large enough, then the series $\exp(a \cdot \nabla)$ converges to an operator on $\dfont^{[r;s]}(V)$. This way, we can define a twisted action of $\Gamma_{K_n}$ on $\dfont^{[r;s]}(V)$, by the formula $h \star x = \exp(\log_p(\chilt(h)) \cdot \nabla)(x)$. This action is now $F$-analytic by construction.

Since $s \geq qr$, the modules $\dfont^{[q^m r;q^m s]}(V)$ for $m \geq 0$ are glued together by $\phi_q$ and this way, we get a new action of $\Gamma_{K_n}$ on $\dfont(V)$. Since $\phi_q$ is unchanged, this new $(\phi_q, \Gamma_{K_n})$-module is \'etale, and therefore corresponds to a representation $W$ of $G_{K_n}$. This representation $W$ is $F$-analytic by theorem \ref{mex}, and its restriction to $H_F$ is isomorphic to $V$.

The space $\Hom(V, \ind_{G_{K_n}}^{G_K} W)^{H_F}$ is nonempty, and is a finite dimensional representation of $\Gamma_K$. Since $\Gamma_K$ is abelian, we find (possibly extending scalars) a character $\delta$ of $\Gamma_K$ and a nonzero $f \in \Hom(V, \ind_{G_{K_n}}^{G_K} W)^{H_F}$ such that $h(f)=\delta(h) \cdot f$ if $h \in G_K$. This $f$ gives rise to a nonzero $G_K$-equivariant map $V \otimes \delta \to \ind_{G_{K_n}}^{G_K} W$. Since $\ind_{G_{K_n}}^{G_K} W$ is $F$-analytic and $V$ is absolutely irreducible, the corollary follows.
\end{proof}

Corollary \ref{colmabsirr} (as well as theorem 0.6 of \cite{FX12}) suggests that if we want to attach overconvergent $(\phi_q,\Gamma_K)$-modules to all $F$-linear representations of $G_K$, then we need to go beyond the objects in only one variable. We finish with a conjecture that seems reasonable enough, since it holds for crystalline representations by the work of Kisin and Ren (see also theorem 0.3 of \cite{FX12}).

\begin{conj}\label{locanalone}
If $V$ is $F$-analytic, then it is overconvergent.
\end{conj}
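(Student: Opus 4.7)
The plan is to adapt the Cherbonnier--Colmez proof of overconvergence to the Lubin-Tate setting, with the $F$-analyticity hypothesis providing the extra estimates beyond what is available for a general $F$-linear representation (where in fact overconvergence genuinely fails, as the discussion following theorem \ref{mex} and corollary \ref{colmabsirr} suggests). The first step is to set up a Tate-Sen formalism for the Lubin-Tate tower $F_\infty/F$: construct normalized trace maps $R_n : \hat{F}_\infty \to F_n$ analogous to the Tate--Colmez maps of the cyclotomic case, extend them to the rings $\bt^I$ and to the $(\phi_q,\Gamma_K)$-module $\dfont(V) = \calE(Y_0) \otimes V$, and verify the axioms (TS1)--(TS3) of the Berger--Colmez axiomatization. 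In the cyclotomic case the operator norm of $R_n - \mathrm{id}$ decays uniformly in $V$; for the Lubin-Tate tower the naive analog of this decay is insufficient, which is exactly the source of the nonoverconvergence of general representations, and sharper decay should be available only on $F$-analytic data.

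The second step is to translate the $F$-analyticity hypothesis into a quantitative statement about $\dfont(V)$. By the argument in theorem \ref{mex}, $F$-analyticity of $V$ is equivalent to the vanishing of the partial Sen-type operators $\nabla_j$ for $1 \leq j \leq h-1$ on $\dfont(V)$. Using the intertwining $\iota_n \circ \phi = \iota_{n-1}$ and the locally $F$-analytic vector formalism of \cite{STLAV}, this vanishing should imply that in any basis of $\dfont(V)$ the matrices of elements of $\Gamma_K$ close to $1$ satisfy improved bounds, matching the decay of the $R_n$ above: intuitively, $F$-analyticity kills precisely the directions in which the $R_n$ fail to contract.

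The third step is the descent proper: starting from a basis of $\dfont(V)$ over $\calE(Y_0)$, iteratively correct it by applying $R_n$, use the decay from steps one and two to ensure geometric convergence of the corrections, and combine with a Frobenius descent argument in the style of \cite{LB2} to obtain a $\calEdag(Y_0)$-lattice stable under $\phi_q$ and $\Gamma_K$. A cleaner reformulation, which would make the conjecture essentially automatic, is to show that $\calR(Y_0)$ equals the ring of $F$-locally analytic vectors inside an appropriate completion of $\btrig{}{}$ for the natural action of $\Gamma_F$; this is in the spirit of the role played by $\calR$ as the locally $\Qp$-analytic vectors of $\bt$ in the cyclotomic setting.

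The main obstacle will be reconciling steps one and two: producing decay estimates for the Lubin-Tate normalized traces that are sensitive to the $F$-analytic structure. Unlike the cyclotomic case, the ramification of $F_\infty/F$ is not controlled by a single $\Zp$-parameter, so the Tate--Sen decay must be analyzed in tandem with the condition $\nabla_j = 0$ for $j \neq 0$ rather than independently of it. The multivariable rings $\calR(Y)$ constructed in the present paper do not directly resolve this (they live on the periods side rather than the locally analytic vectors side), which is why the conjecture remains open even granted theorems A, B and C.
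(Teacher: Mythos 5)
The statement you are addressing is stated in the paper as a \emph{conjecture} (conjecture \ref{locanalone}): the paper gives no proof of it, only the remark that it holds for crystalline representations by Kisin--Ren and by theorem 0.3 of \cite{FX12}. Your text is likewise not a proof but a research plan, and you say so yourself in the last sentence (``the conjecture remains open even granted theorems A, B and C''). The genuine gap is therefore the entire analytic core of the argument: in step one you posit normalized traces $R_n$ for the Lubin-Tate tower satisfying Tate--Sen axioms with decay ``sensitive to the $F$-analytic structure,'' and in step two you assert that the vanishing $\nabla_j=0$ for $1\leq j\leq h-1$ ``should imply'' improved bounds on the matrices of elements of $\Gamma_K$ close to $1$. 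Neither estimate is formulated precisely, let alone proved, and it is exactly here that the cyclotomic Cherbonnier--Colmez argument breaks down in the Lubin-Tate setting (the naive analogues of the Tate trace estimates fail for $h\geq 2$, which is consistent with the existence of non-overconvergent $F$-linear representations). Without these estimates, step three (the iterative correction and Frobenius descent to an $\calEdag(Y_0)$-lattice) has nothing to run on. So the proposal cannot be accepted as a proof of the conjecture.

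That said, the direction is reasonable and consistent with what the paper itself establishes: the equivalence of $F$-analyticity of $V$ with $\nabla_j=0$ on $\dfont(V)$ is exactly theorem \ref{mex}, and your ``cleaner reformulation'' --- identifying $\calR(Y_0)$ (or an overconvergent variant) with the $F$-locally analytic vectors of a suitable completed period ring for the $\Gamma_F$-action, in the spirit of \cite{STLAV} --- is the kind of statement that would indeed make the conjecture fall out, and is closer in spirit to how one would expect progress to be made than a direct imitation of the cyclotomic Tate--Sen machinery. But in the present paper nothing of this sort is available, and your write-up supplies no lemma that could be checked; it should be presented as a strategy, not a proof.
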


\section{Construction of $\calR^+(Y)$-modules}\label{constpgm}

We now explain how to construct some $\calR^+(Y)$-modules $\mfont^+(D)$ that are attached to some filtered $\phi_q$-modules $D$. Let $D$ be a finite dimensional $F$-vector space, endowed with an $F$-linear Frobenius map $\phi_q : D \to D$, and an action of $G_F$ on $D$ that factors through $\Gamma_F$ and commutes with $\phi_q$. 

If $n \in \ZZ$, let $\bdr \otimes_F^{\sigma^n} D$ denote the tensor product of $\bdr$ and $D$ above $F$, where $F$ maps to $\bdr$ via $\sigma^n$. We then have $b \otimes a \cdot d = \sigma^n(a) \cdot b \otimes d$. Note that $\bdr \otimes_F^{\sigma^n} D$ only depends on $n \bmod{h}$. For each $0 \leq j \leq h-1$, let $\Fil_j^\bullet$ be a filtration on $D$ that is stable under $\Gamma_F$, and define $W_{\dR}^{+,j}(D) = \Fil_j^0(\bdr \otimes_F^{\sigma^j} D)$ so that $W_{\dR}^{+,j}$ is  a $G_F$-stable $\bdr^+$-lattice of $\bdr \otimes_F^{\sigma^j} D$.

\begin{exem}\label{exofd}
If $V$ is an $F$-linear crystalline representation of $G_F$ of dimension $d$, then $\dcris(V)$ is a free $F \otimes_{\Qp} F$-module of rank $d$ and we have 
\[ \dcris(V) = D \oplus \phi(D) \oplus \cdots \oplus \phi^{h-1}(D), \]
according to the decomposition of $F \otimes_{\Qp} F$ as $\prod_{\sigma^i : F \to F} F$. Each $\phi^j(D)$ comes with the filtration induced from $\dcris(V)$, and we set $\Fil_j^k D = \phi^{-j} (\Fil^k \dcris(V) \cap \phi^j(D))$. 
\end{exem}

We now briefly recall some definitions from \cite{ST03}. The ring $\calR^+(Y)$ is a Fr\'echet-Stein algebra; indeed, its topology is defined by the valuations $\{ W(\cdot,[0;s_n]) \}_{n \in S}$, where $S$ is any unbounded set of integers, and the ring $\calR^{[0;s_n]} (Y)$ is noetherian and flat over $\calR^{[0;s_m]} (Y)$ if $m \geq n \in S$. Recall that a coherent sheaf is then a family $\{ M^{[0;s_n]} \}_{n \in S}$ of finitely generated $\calR^{[0;s_n]} (Y)$-modules, such that $\calR^{[0;s_n]} (Y) \otimes_{\calR^{[0;s_m]} (Y)} M^{[0;s_m]} = M^{[0;s_n]}$ for all $m \geq n \in S$. A $\calR^+(Y)$-module $M$ is said to be coadmissible if $M$ is the set of global sections of a coherent sheaf $\{ M^{[0;s_n]} \}_{n \in S}$. We say that $M$ is a reflexive coadmissible $\calR^+(Y)$-module if each $M^{[0;s_n]}$ is a reflexive $\calR^{[0;s_n]} (Y)$-module. By lemma 8.4 of \cite{ST03}, this is the same as requiring that $M$ itself be a reflexive $\calR^+(Y)$-module.

Let $\lambda_j = \log_{\LT}(Y_j) / Y_j$ and $\lambda = \lambda_0 \cdots \lambda_{h-1}$, so that for any $n \in \ZZ$, $t$ is a $\Qp$-multiple of $\iota_n(\lambda \cdot Y_0 \cdots Y_{h-1})$. Let $f_j = \lambda / \lambda_j$, so that if $k \neq j \bmod{h}$, then $\iota_k(f_j)$ is a unit of $\bdr^+$.

If $y = \sum_i y_i \otimes d_i \in \calR^+(Y)[1/\lambda] \otimes_F D$, let $\iota_k(y) = \sum_i \iota_k(y_i) \otimes d_i \in \bdr \otimes_F^{\sigma^{-k}} D$. 

\begin{defi}\label{defmd}
Let $\mfont^+(D)$ be the set of $y \in \calR^+(Y)[1/\lambda] \otimes_F D$ that satisfy $\iota_k(y) \in W_{\dR}^{+,-k}(D)$ for all $k \geq h$.
\end{defi}

\begin{theo}\label{mprcs}
If $D$ is a $\phi_q$-module with an action of $\Gamma_F$ and $h$ filtrations, then
\begin{enumerate}
\item $\mfont^+(D)$ is a reflexive coadmissible $\calR^+(Y)$-module; 
\item the $\calR^+(Y)[1/f_j]$-module $\mfont^+(D)[1/f_j]$ is free of rank $d$ for $0 \leq j \leq h-1$;
\item $\mfont^+(D) = \cap_{j=0}^{h-1}  \mfont^+(D)[1/f_j]$.
\end{enumerate}
\end{theo}

In the remainder of this section, we prove theorem \ref{mprcs}. We now establish some preliminary results. Let $S = \{ hm+(h-1)$ where $m \geq 1 \}$, and take $n \in S$. Recall that on the ring $\calR^{[0;s_n]} (Y)$, the map $\iota_k$ is defined for $h \leq k \leq n$. Let 
\[ \mfont(D)^{[0;s_n]} = \{ y \in \calR^{[0;s_n]} (Y)[1/\lambda] \otimes_F D,\ \iota_k(y) \in W_{\dR}^{+,-k}(D) \text{ for all } h \leq k \leq n\}. \]
For $0 \leq j \leq h-1$, recall that $\calR^I(Y_j)$ is a ring of power series in one variable. Let
\begin{align*} 
N_j^{[0;s_n]} & = \{ y \in \calR^{[0;s_n]}(Y_j)[1/\lambda_j] \otimes_F D, \ \phi_q^{-k}\phi^{-j}(y) \in W_{\dR}^{+,-j}(D) \text{ for all } 1 \leq k \leq m\}, \\
N_j^+ & = \{ y \in \calR^+(Y_j)[1/\lambda_j] \otimes_F D, \ \phi_q^{-k}\phi^{-j}(y) \in W_{\dR}^{+,-j}(D) \text{ for all } k \geq 1 \}. 
\end{align*} 

\begin{prop}\label{mdonedim}
The $\calR^+ (Y_j)$-module $N_j^+$ is free of rank $d$, for all $n$ we have $N_j^{[0;s_n]} = \calR^{[0;s_n]}(Y_j) \otimes_{\calR^+(Y_j)} N_j^+$, and the map $\bdr^+ \otimes^{\phi_q^{-k}\phi^{-j}}_{\calR^+ (Y_j)} N_j^+ \to W_{\dR}^{+,-j}(D)$ is an isomorphism for all $k \geq 1$.
\end{prop}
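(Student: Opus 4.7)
The plan is to construct $N_j^+$ as an explicit modification of the trivial lattice $\calR^+(Y_j) \otimes_F D$ inside $\calR^+(Y_j)[1/\lambda_j] \otimes_F D$, with the modification at each divisor $\{Q_k(Y_j)=0\}$ prescribed by the filtration $\Fil_{-j}^\bullet$. This is the one-variable Lubin--Tate analogue of the construction in \cite{LB10}, carried out by Kisin--Ren in \cite{KR09}; I would follow that pattern.

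Since $\log_{\LT}(Y_j) = Y_j \prod_{k \geq 1} Q_k(Y_j)/p$, inverting $\lambda_j$ amounts to inverting all the $Q_k(Y_j)$. By theorem \ref{embdr} applied in one variable, $\iota_{hk+j}$ sends $Q_k(Y_j)$ to a uniformizer of $\bdr^+$, so the defining condition of $N_j^+$ is a modification condition at each divisor $\{Q_k(Y_j)=0\}$: locally near $Q_k$, the module $N_j^+$ equals $\sum_i Q_k(Y_j)^{-i} (\calR^+(Y_j) \otimes_F \Fil_{-j}^i D)$. Because $\phi_q(Q_k(Y_j)) = Q_{k+1}(Y_j)$ and $\iota_{h(k+1)+j} \circ \phi_q = \iota_{hk+j}$, the modification at $Q_{k+1}$ is the $\phi_q$-transform of that at $Q_k$, so all modifications are Frobenius translates of the first one.

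On the closed-disk subring $\calR^{[0;s_n]}(Y_j)$ (with $n=hm+(h-1) \in S$), only the finitely many $Q_k$ for which $\iota_{hk+j}$ is defined on this ring enter the definition of $N_j^{[0;s_n]}$. Each individual modification preserves rank and finite generation, and since $\calR^{[0;s_n]}(Y_j)$ is a principal ideal domain (a Tate algebra in one variable), $N_j^{[0;s_n]}$ is free of rank $d$. The base-change identity $N_j^{[0;s_n]} = \calR^{[0;s_n]}(Y_j) \otimes_{\calR^+(Y_j)} N_j^+$ is automatic from the modification picture: conditions with $hk+j > n$ are vacuous once we restrict to $\calR^{[0;s_n]}$. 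Since $\calR^+(Y_j)$ is a B\'ezout and Fr\'echet--Stein algebra (Lazard), a coadmissible module whose closed-disk pieces are free of constant rank $d$ is itself free of rank $d$.

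Finally, the map $\bdr^+ \otimes^{\phi_q^{-k}\phi^{-j}}_{\calR^+(Y_j)} N_j^+ \to W_{\dR}^{+,-j}(D)$ is well-defined by the very definition of $N_j^+$. Both sides are free $\bdr^+$-modules of rank $d$, and the map is an isomorphism by the local structure at $Q_k$: the generators $Q_k(Y_j)^{-i} \otimes d$ for $d \in \Fil_{-j}^i D$ map, up to units of $\bdr^+$, to $t^{-i} \otimes d$, and together they span exactly $\Fil_{-j}^0(\bdr \otimes^{\sigma^{-j}}_F D) = W_{\dR}^{+,-j}(D)$ over $\bdr^+$. The main obstacle is verifying that the infinite system of modifications yields a finitely generated $\calR^+(Y_j)$-module of constant rank $d$; this is where the B\'ezout/Fr\'echet--Stein structure of $\calR^+(Y_j)$ and the Frobenius-compatibility of the modifications are essential.
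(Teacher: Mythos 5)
Your proposal is correct and takes essentially the same route as the paper: the paper's own proof of this proposition simply observes that, being a statement in one variable, it is the standard modification argument of \S II.1 of \cite{LB10} and \S 2.2 of \cite{KR09}, which is exactly what you sketch (modifying the trivial lattice $\calR^+(Y_j)\otimes_F D$ along the divisors $Q_k(Y_j)=0$ according to $\Fil^\bullet_{-j}$, checking freeness on closed subdisks, and gluing over the open disk via the B\'ezout/Lazard structure of $\calR^+(Y_j)$). No genuine gap; when writing it out in full, the points to make explicit are the local computation at each divisor (injectivity of $\iota_{hk+j}$ on the completed local ring and the fact that $\iota_{hk+j}(Q_k(Y_j))$ is a uniformizer of $\bdr^+$) and the base-change compatibility of the closed-disk pieces.
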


\begin{proof}
Since there is only one variable, the proof is a standard argument, analogous to the one which one can find in \S II.1 of \cite{LB10} or \S 2.2 of \cite{KR09}.
\end{proof}

Let $M_j^{[0;s_n]} = \calR^{[0;s_n]}(Y)[1/f_j] \otimes_{\calR^{[0;s_n]}(Y_j)} N_j^{[0;s_n]}$, where $f_j = \lambda/\lambda_j$.

\begin{prop}\label{mintmj}
We have $\mfont(D)^{[0;s_n]} [1/f_j] = M_j^{[0;s_n]}$ and $\mfont(D)^{[0;s_n]} = \cap_j M_j^{[0;s_n]}$.
\end{prop}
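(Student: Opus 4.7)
The plan is to reduce both identities to a careful tracking of the $\iota_k$-conditions, using that $\calR^{[0;s_n]}(Y)$ is a Tate algebra (in particular a UFD) whose prime divisors relevant to $\lambda$ are exactly the $Q_{\ell'}(Y_{j'})$ with $h\ell'+j' \leq n$. The key computation, which underlies everything, is that $\iota_{h\ell+j}$ maps $Q_\ell(Y_j)$ to a uniformizer of $\ker(\theta) \subset \bdr^+$ (by theorem \ref{embdr}), while mapping every other $Q_{\ell'}(Y_{j'})$ with $(\ell',j') \neq (\ell,j)$ to a unit of $\bdr^+$. Consequently, for $k \not\equiv j \bmod h$ the image $\iota_k(\lambda_j)$ is a unit of $\bdr^+$, so $\iota_k$ extends from $\calR^{[0;s_n]}(Y_j)$ to $\calR^{[0;s_n]}(Y_j)[1/\lambda_j]$ landing in $\bdr^+$, while $\iota_k(f_j)$ is always invertible in $\bdr$ (and a unit of $\bdr^+$ when $k \equiv j \bmod h$).

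The easy inclusion $M_j^{[0;s_n]} \subset \mfont(D)^{[0;s_n]}[1/f_j]$ I would establish by first checking $N_j^{[0;s_n]} \subset \mfont(D)^{[0;s_n]}$: for $k \equiv j \bmod h$ the required condition on $\iota_k(y)$ is literally in the definition of $N_j^{[0;s_n]}$, and for $k \not\equiv j \bmod h$ it is automatic, since $\iota_k(y) \in \bdr^+ \otimes_F D \subset W_{\dR}^{+,-k}(D)$. Closing under multiplication by $\calR^{[0;s_n]}(Y)[1/f_j]$, using $\iota_k(f_j) \in \bdr^\times$, and inverting $f_j$ then yields the inclusion.

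The main obstacle is the reverse inclusion $\mfont(D)^{[0;s_n]}[1/f_j] \subset M_j^{[0;s_n]}$. Here the approach is to exploit proposition \ref{mdonedim}: a $\calR^+(Y_j)$-basis $e_1,\hdots,e_d$ of $N_j^+$ becomes, after inverting $\lambda_j$, a basis of $\calR^+(Y_j)[1/\lambda_j] \otimes_F D$, hence a basis of $\calR^{[0;s_n]}(Y)[1/\lambda] \otimes_F D$ over $\calR^{[0;s_n]}(Y)[1/\lambda]$. Writing any $y \in \mfont(D)^{[0;s_n]}$ as $y = \sum c_i e_i$ with $c_i \in \calR^{[0;s_n]}(Y)[1/\lambda]$, one must show every $c_i$ carries no $\lambda_j$-pole. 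Applying $\iota_{h\ell+j}$ with $1 \leq \ell \leq m$ and using the isomorphism $\bdr^+ \otimes^{\phi_q^{-\ell}\phi^{-j}}_{\calR^+(Y_j)} N_j^+ \to W_{\dR}^{+,-j}(D)$ of proposition \ref{mdonedim}, which says the $\iota_{h\ell+j}(e_i)$'s form a $\bdr^+$-basis of $W_{\dR}^{+,-j}(D)$, the condition $\iota_{h\ell+j}(y) \in W_{\dR}^{+,-j}(D)$ forces $\iota_{h\ell+j}(c_i) \in \bdr^+$ for every $i$. Writing each $c_i$ in lowest terms in the UFD $\calR^{[0;s_n]}(Y)$ and invoking theorem \ref{embdr} (which guarantees that any $g$ not divisible by $Q_\ell(Y_j)$ maps under $\iota_{h\ell+j}$ to a unit of $\bdr^+$), an order-of-pole count against the $\Fil^1$-uniformizer $\iota_{h\ell+j}(Q_\ell(Y_j))$ then forces the exponent of $Q_\ell(Y_j)$ in the denominator of $c_i$ to vanish for each $1 \leq \ell \leq m$, which is exactly the statement $c_i \in \calR^{[0;s_n]}(Y)[1/f_j]$.

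Finally, $\mfont(D)^{[0;s_n]} = \cap_j M_j^{[0;s_n]}$ is a formal consequence of the first identity: the inclusion $\subset$ is immediate from localization, and for $y \in \cap_j \mfont(D)^{[0;s_n]}[1/f_j]$ one verifies each condition $\iota_k(y) \in W_{\dR}^{+,-k}(D)$ by choosing $j$ with $k \equiv j \bmod h$, writing $y = x/f_j^a$ with $x \in \mfont(D)^{[0;s_n]}$, and noting that $\iota_k(f_j)$ is a unit in $\bdr^+$ in this case, so $\iota_k(y) = \iota_k(x)/\iota_k(f_j)^a \in W_{\dR}^{+,-k}(D)$.
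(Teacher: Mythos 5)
Your reverse inclusion and your deduction of $\mfont(D)^{[0;s_n]} = \cap_j M_j^{[0;s_n]}$ are essentially the paper's argument (slightly streamlined: you expand directly in an $\calR^+(Y_j)$-basis of $N_j^+$ instead of first splitting $y$ by Weierstrass division, which is fine since the basis of $N_j^+$ becomes a basis of $\calR^{[0;s_n]}(Y)[1/\lambda]\otimes_F D$ after inverting $\lambda$). But there is a genuine error in your ``easy'' inclusion $M_j^{[0;s_n]} \subset \mfont(D)^{[0;s_n]}[1/f_j]$: you claim that for $k \not\equiv j \bmod h$ the condition is automatic because $\bdr^+ \otimes_F D \subset W_{\dR}^{+,-k}(D)$. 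This containment holds if and only if the filtration $\Fil^\bullet_{-k}$ is effective, i.e.\ $\Fil^0_{-k}D = D$: writing $\Fil^0_{-k}(\bdr \otimes_F^{\sigma^{-k}} D) = \oplus_s t^{-h_s}\bdr^+ e_s$ for a filtered basis $e_s$ of weights $h_s$, it contains $\bdr^+\otimes_F D$ exactly when all $h_s \geq 0$. The proposition is stated (and needed) for arbitrary $\Gamma_F$-stable filtrations — in the crystalline example the Hodge--Tate weights have both signs — so in general $N_j^{[0;s_n]} \not\subset \mfont(D)^{[0;s_n]}$, and the step as written fails; effectivity is only introduced later in the paper, for the Frobenius statements, not here.

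The repair is exactly the paper's device, and it is the reason $f_j$ (rather than $\lambda_j$) appears: choose $a$ with $t^a \cdot \bdr^+\otimes_F^{\phi^i} D \subset W_{\dR}^{+,i}(D)$ for all $i$, and observe that for $k \not\equiv j \bmod h$ one has $\iota_k(f_j) \in \Fil^1\bdr^+$ (one of the factors $Q_\ell(Y_i)$, $i \neq j$, is sent into $\ker\theta$), while $\iota_k(f_j)$ is a unit of $\bdr^+$ for $k \equiv j$. Hence if $y \in M_j^{[0;s_n]}$, so $y \in \lambda^{-c}\calR^{[0;s_n]}(Y)\otimes_F D$ for some $c$, then $f_j^{a+c} y$ satisfies all the conditions defining $\mfont(D)^{[0;s_n]}$: at $k \equiv j$ this is the condition built into $N_j^{[0;s_n]}$ (unchanged since $\iota_k(f_j)$ is a unit), and at $k \not\equiv j$ the extra factor $\iota_k(f_j)^{a}$ supplies the $t^a$ needed to land in $W_{\dR}^{+,-k}(D)$. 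With this correction your proof matches the paper's. (A minor point: you do not really need $\calR^{[0;s_n]}(Y)$ to be a Tate algebra or a UFD — the radii are not in the value group of $F$ — only that one can cancel factors of the explicit nonzerodivisors $Q_\ell(Y_j)$, which proposition \ref{weierdiv} provides.)
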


\begin{proof}
In the sequel, we use the fact that $Q_1(Y_j) \cdots Q_m(Y_j)$ and $\lambda_j$ generate the same ideal of $\calR^{[0;s_n]}(Y_j)$ (recall that $n=hm+(h-1)$). Let $a$ and $b$ be two integers such that 
\[ t^a \cdot \bdr^+ \otimes_F^{\phi^j} D \subset W_{\dR}^{+,j}(D) \subset t^{-b} \cdot \bdr^+ \otimes_F^{\phi^j} D, \] for all $j$. We then have $\mfont(D)^{[0;s_n]} \subset \lambda^{-b} \cdot \calR^{[0;s_n]}(Y) \otimes_F D$ by theorem \ref{embdr}.

We have $\phi^{-(hk+j)}( \calR^{[0;s_n]} (Y)[1/f_j] ) \subset \bdr^+$ for all $1 \leq k \leq m$ so that if $y \in M_j^{[0;s_n]}$, then $\phi^{-(hk+j)}(y) \in W_{\dR}^{+,-j}(D)$ for all $1 \leq k \leq m$. On the other hand, if $y \in M_j^{[0;s_n]}$, then $y \in \lambda^{-c} \cdot \calR^{[0;s_n]}(Y) \otimes_F D$ for some $c \geq 0$, so that $f_j^{a+c} y \in \mfont(D)^{[0;s_n]}$. This implies that $M_j^{[0;s_n]} \subset \mfont(D)^{[0;s_n]}[1/f_j]$.

We now prove that $\mfont(D)^{[0;s_n]} \subset M_j^{[0;s_n]}$. Choose $y \in \mfont(D)^{[0;s_n]}$. Since 
\[ \mfont(D)^{[0;s_n]} \subset \lambda^{-b} \cdot \calR^{[0;s_n]}(Y) \otimes_F D, \]
we can write $y = \lambda^{-b} \sum_k z_k \otimes d_k$. By Weierstrass dividing (proposition \ref{weierdiv}) the $z_k$'s by the polynomial $(Q_1(Y_j) \cdots Q_m(Y_j))^{a+b}$, we can write $y=(Q_1(Y_j) \cdots Q_m(Y_j))^{a+b} z + y_0$ with $y_0 \in \calR^{[0;s_n]}(Y) [1/\lambda] \otimes_{\calR^{[0;s_n]}(Y_j)} N_j^{[0;s_n]}$. 

Note that $(Q_1(Y_j) \cdots Q_m(Y_j))^{a+b} z \in M_j^{[0;s_n]}$ because $t^a \bdr^+ \otimes_F^{\phi^j} D \subset W_{\dR}^{+,j}(D)$, so that $(Q_1(Y_j) \cdots Q_m(Y_j))^a  \cdot D \subset N_j^{[0;s_n]}$.

Write $y_0 = \sum_{k=1}^d a_k \otimes n_k $ where $a_k \in \calR^{[0;s_n]}(Y) [1/\lambda]$ and $n_1,\hdots,n_d$ is a basis of $N_j^{[0;s_n]}$. The element $y_0$ satisfies $\phi_q^{-\ell}\phi^{-j}(y_0) \in W_{\dR}^{+,-j}(D)$ for all $1 \leq \ell \leq m$. By proposition \ref{mdonedim}, the map 
\[ \bdr^+ \otimes^{\phi_q^{-\ell}\phi^{-j}}_{\calR^{[0;s_n]}(Y_j)} N_j^{[0;s_n]} \to W_{\dR}^{+,-j}(D) \] 
is an isomorphism; this implies that $\phi_q^{-\ell}\phi^{-j}(a_k) \in \bdr^+$ for all $1 \leq \ell \leq m$. Theorem \ref{embdr} now implies that $a_k$ has no pole at any of the roots of $Q_1(Y_j),\hdots,Q_m(Y_j)$, so that we have $a_k \in \calR^{[0;s_n]}(Y) [1/f_j]$. This implies that $y_0 \in M_j^{[0;s_n]}$, and therefore also $y$. This proves that $\mfont(D)^{[0;s_n]} \subset M_j^{[0;s_n]}$ and therefore $\mfont(D)^{[0;s_n]}[1/f_j] = M_j^{[0;s_n]}$. 

If $x \in \cap_j M_j^{[0;s_n]}$, and if $k = j \bmod{h}$ with $0 \leq j \leq h-1$, then the fact that $x \in \mfont(D)^{[0;s_n]}[1/f_j] = \calR^{[0;s_n]}(Y)[1/f_j] \otimes_{\calR^{[0;s_n]}(Y_j)} N_j^{[0;s_n]}$ implies that $\iota_k(x) \in W_{\dR}^{+,-k}(D)$. This is true for all $h \leq k \leq n$, so that $x \in \mfont(D)^{[0;s_n]}$ and this proves the second assertion.
\end{proof}

\begin{lemm}\label{mnten}
We have $\mfont^+(D)[1/f_j] = \calR^+(Y)[1/f_j] \otimes_{\calR^+(Y_j)} N_j^+$.
\end{lemm}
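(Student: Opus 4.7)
My plan is to bootstrap from the finite-level identification obtained by combining propositions \ref{mintmj} and \ref{mdonedim}: for each $n\in S$,
\[
\mfont(D)^{[0;s_n]}[1/f_j] = \calR^{[0;s_n]}(Y)[1/f_j]\otimes_{\calR^{[0;s_n]}(Y_j)} N_j^{[0;s_n]} = \calR^{[0;s_n]}(Y)[1/f_j]\otimes_{\calR^+(Y_j)} N_j^+.
\]

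For the inclusion $\calR^+(Y)[1/f_j]\otimes_{\calR^+(Y_j)} N_j^+ \subseteq \mfont^+(D)[1/f_j]$, I would take $y = \sum_i b_i\otimes n_i$ with $b_i\in \calR^+(Y)$ and $n_i\in N_j^+$ and show that $f_j^L y \in \mfont^+(D)$ for $L$ large. When $k\equiv j\pmod{h}$, $\iota_k(f_j)$ is a unit of $\bdr^+$ and the condition $\iota_k(y)\in W_{\dR}^{+,-k}(D)$ follows from the defining property of $N_j^+$. When $k\not\equiv j\pmod{h}$, $\iota_k(f_j)$ has a zero of order exactly one at $t$, so $\iota_k(f_j^L y)\in t^L\cdot \bdr^+\otimes_F^{\sigma^{-k}}D$; since the finitely many filtrations on $D$ are bounded, one can choose a uniform $L$ (depending only on $D$) with $t^L\cdot \bdr^+\otimes_F^{\sigma^{-k}}D\subseteq W_{\dR}^{+,-k}(D)$ for all $k\geq h$.

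For the reverse inclusion, I fix an $\calR^+(Y_j)$-basis $n_1,\hdots,n_d$ of $N_j^+$ (available from proposition \ref{mdonedim}). Given $y\in\mfont^+(D)[1/f_j]$, the finite-level equality writes $y = \sum_i c_i^{(n)}\, n_i$ uniquely with $c_i^{(n)}\in\calR^{[0;s_n]}(Y)[1/f_j]$; uniqueness forces the $c_i^{(n)}$ to be compatible under restriction, defining elements $c_i\in\calR^+(Y)[1/\lambda]$ with no pole along $\{\lambda_j=0\}$ on any of the sub-polydisks $[0;s_n]$. The remaining task is to deduce $c_i\in\calR^+(Y)[1/f_j]$: writing $c_i = \lambda^{-N}g$ with $g\in\calR^+(Y)$, it is enough to show $\lambda_j^N\mid g$ in $\calR^+(Y)$, whence $c_i = f_j^{-N}(g/\lambda_j^N)\in\calR^+(Y)[1/f_j]$.

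The main obstacle is this last divisibility step, which is where information genuinely has to be collected across all levels. Using the factorization $\lambda_j = \prod_{k\geq 1}Q_k(Y_j)/p$, the claim reduces to showing that $g$ vanishes to order $\geq N$ along each hypersurface $\{Q_k(Y_j)=0\}$, these being pairwise distinct irreducibles in $\OO_F[Y_j]$ that are coprime to $f_j$ (since $f_j$ involves only the variables $Y_i$ with $i\neq j$). I would handle this by iterated Weierstrass division (proposition \ref{weierdiv}) at each finite level $[0;s_n]$, using the uniqueness clause there to glue the successive quotients across levels into a single element of $\calR^+(Y)$.
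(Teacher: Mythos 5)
Your outline follows the same route as the paper's proof: combine propositions \ref{mintmj} and \ref{mdonedim} to get the identification at each level $[0;s_n]$, use $\mfont^+(D)=\cap_n \mfont(D)^{[0;s_n]}$, and check the inclusion $\calR^+(Y)[1/f_j]\otimes_{\calR^+(Y_j)}N_j^+\subset \mfont^+(D)[1/f_j]$ directly; that part of your argument is fine (and your statement that $\iota_k(f_j)$ is a unit of $\bdr^+$ precisely when $k\equiv j \bmod h$, with a simple zero otherwise, is the correct one, consistent with its use in proposition \ref{surjlatt}). The gap is in the reverse inclusion, at the sentence ``uniqueness forces the $c_i^{(n)}$ to be compatible under restriction, defining elements $c_i\in\calR^+(Y)[1/\lambda]$''. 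Compatibility only produces an element of $\varprojlim_n \calR^{[0;s_n]}(Y)[1/f_j]$, i.e.\ a function on the open polydisk whose poles lie along the infinitely many hypersurfaces $\{Q_k(Y_i)=0\}$, $i\neq j$, $k\geq 1$; it gives no uniform bound on the pole orders as $k$ grows, and $\cap_n \calR^{[0;s_n]}(Y)[1/f_j]$ is strictly larger than $\calR^+(Y)[1/f_j]$ and is not contained in $\calR^+(Y)[1/\lambda]$. So the membership $c_i\in\calR^+(Y)[1/\lambda]$, hence the very existence of a single exponent $N$ with $c_i=\lambda^{-N}g$, is precisely what has to be proved and is assumed in your write-up; you have located ``the main obstacle'' one step too late.

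The missing input is the uniform lattice bound already exploited in the proof of proposition \ref{mintmj}: fix $a,b$ with $t^a\cdot\bdr^+\otimes_F^{\phi^j}D\subset W_{\dR}^{+,j}(D)\subset t^{-b}\cdot\bdr^+\otimes_F^{\phi^j}D$ for all $j$. Theorem \ref{embdr} then gives $\mfont^+(D)\subset\lambda^{-b}\cdot\calR^+(Y)\otimes_F D$, and one also gets $\lambda_j^{a}\cdot(\calR^+(Y_j)\otimes_F D)\subset N_j^+$, so that your basis $n_1,\hdots,n_d$ is a basis of $\calR^+(Y_j)[1/\lambda_j]\otimes_F D$ over $\calR^+(Y_j)[1/\lambda_j]$ and Cramer's rule, applied to $y=f_j^{-M}x$ with $x\in\mfont^+(D)$, bounds the coefficients by $c_i\in\lambda_j^{-(a+b)}f_j^{-(M+b)}\calR^+(Y)$ with exponents independent of the level. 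Once this fixed exponent is available, your final step (the $c_i$ have no poles along any $\{Q_k(Y_j)=0\}$ by the level-wise identification, hence the numerator is divisible by the required power of $\lambda_j$, proved by Weierstrass division at each level and glued by uniqueness, or more quickly by comparing valuations at height-one primes of the normal noetherian rings $\calR^{[0;s_n]}(Y)$) does finish, and it is the fixed exponent that makes the level-wise divisibilities glue. So the strategy is salvageable and close to the paper's, but as written the crucial uniformity across levels is missing rather than established.
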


\begin{proof}
By combining propositions \ref{mdonedim} and \ref{mintmj}, we find that 
\[ \mfont(D)^{[0;s_n]} [1/f_j] = \calR^{[0;s_n]}(Y)[1/f_j] \otimes_{\calR^+(Y_j)} N_j^+. \]
Since $\mfont(D)^+ = \cap_j \mfont(D)^{[0;s_n]}$, we have $\mfont(D)^+ [1/f_j] \subset \cap_j \mfont(D)^{[0;s_n]} [1/f_j]$. We also have $\calR^+(Y)[1/f_j] \otimes_{\calR^+(Y_j)} N_j^+ \subset \mfont^+(D)[1/f_j]$, and those two inclusions imply that $\mfont^+(D)[1/f_j] = \calR^+(Y)[1/f_j] \otimes_{\calR^+(Y_j)} N_j^+$.
\end{proof}

\begin{proof}[Proof of theorem \ref{mprcs}]
We first prove that the family $\{ \mfont(D)^{[0;s_n]} \}_{n \in S}$ is a coherent sheaf. Take $n \geq m \in S$. We have 
\begin{multline*} 
\calR^{[0;s_m]}(Y) \otimes_{\calR^{[0;s_n]}(Y)} \mfont(D)^{[0;s_n]} \\ 
= \calR^{[0;s_m]}(Y)\otimes_{\calR^{[0;s_n]}(Y)} (\cap_j \calR^{[0;s_n]}(Y)[1/f_j] \otimes_{\calR^{[0;s_n]}(Y_j)} N_j^{[0;s_n]}) \\
= \cap_j \calR^{[0;s_m]}(Y)[1/f_j] \otimes_{\calR^{[0;s_n]}(Y_j)} N_j^{[0;s_n]}
= \mfont(D)^{[0;s_m]}.
\end{multline*}
This implies that the family $\{ \mfont(D)^{[0;s_n]} \}_{n \in S}$ is a coherent sheaf. It is clear that its global sections are precisely $\mfont^+(D)$. By proposition \ref{mintmj}, we have $\mfont(D)^{[0;s_n]} = \cap_j \mfont(D)^{[0;s_n]} [1/f_j]$ where each $\mfont(D)^{[0;s_n]} [1/f_j]$ is free of rank $d$ over $\calR(Y)^{[0;s_n]} [1/f_j]$. The fact that $\mfont(D)^{[0;s_n]}$ is reflexive now follows from proposition 6 of VII.4.2 of \cite{AC61}, and this proves (1).

By combining proposition \ref{mdonedim} and lemma \ref{mnten}, we get item (2) of the theorem. Suppose now that $x \in \cap_j \mfont^+(D)[1/f_j]$. If $k = j \bmod{h}$ with $0 \leq j \leq h-1$, then the fact that $x \in \mfont^+(D)[1/f_j] = \calR^+(Y)[1/f_j] \otimes_{\calR^+(Y_j)} N_j^+$ implies that $\iota_k(x) \in W_{\dR}^{+,-k}(D)$. This being true for all $k \geq h$, we have $x \in \mfont^+(D)$ and this proves item (3) of the theorem.
\end{proof}

\begin{rema}
\label{h2free}
If $h \leq 2$, then the ring $\calR^{[0;s_n]}(Y)$ is of dimension $\leq 2$, and reflexive $\calR^{[0;s_n]}(Y)$-modules are therefore projective. By L\"utkebohmert's theorem (see \cite{L77}, corollary on page 128), the $\calR^{[0;s_n]}(Y)$-module $\mfont(D)^{[0;s_n]}$ is then free of rank $d$. The system $\{ \mfont(D)^{[0;s_n]}\}_{n \in S}$ then forms a vector bundle over the open unit polydisk. By combining proposition 2 on page 87 of \cite{LG68} (note that ``$A_m$'' is defined at the bottom of page 82 of loc.\ cit.), and the main theorem of \cite{WB81}, we get that $\mfont^+(D)$ is free of rank $d$ over $\calR^+(Y)$. If $h \geq 3$, I do not know whether this still holds.
\end{rema}

\section{Properties of $\mfont^+(D)$}\label{propsmd}

We now prove that $\mfont(D) = \calR(Y) \otimes_{\calR^+(Y)} \mfont^+(D)$ is a $(\phi_q,\Gamma_F)$-module over $\calR(Y)$, and that if $D$ arises from a crystalline representation $V$, then $\mfont^+(D)$ and $V$ are naturally related. It is clear from the definition that $\mfont^+(D)$ is stable under the action of $\Gamma_F$. We also have $\lambda^a \cdot \calR^+(Y) \otimes_F D \subset \mfont^+(D)$ for some $a \geq 0$, so that
\[ \calR^+(Y)[1/\lambda] \otimes_{\calR^+(Y)} \mfont^+(D)  = \calR^+(Y)[1/\lambda] \otimes_F D. \]

Say that the module $D$ with $h$ filtrations is effective if $\Fil^0_j(D)=D$ for $0 \leq j \leq h-1$. Recall that $n = hm+(h-1)$ with $m \geq 1$. 

\begin{lemm}\label{phinj}
If $D$ is effective, then the $\calR^+(Y_j)$-module $N_j^+$ is stable under $\phi_q$, and $N_j^+ / \phi_q^*(N_j^+)$ is killed by $Q_1(Y_j)^{a_j}$ if $a_j \geq 0$ is such that $\Fil^{a_j+1} D = \{ 0 \}$.
\end{lemm}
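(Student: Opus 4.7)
I would treat the two assertions separately, both hinging on the fundamental relation $\iota_{n+h} \circ \phi_q = \iota_n$ on $\calR^+(Y)$ (from the remark after definition \ref{defiotan}), which extends to the tensor product $\calR^+(Y_j)[1/\lambda_j] \otimes_F D$ with the natural Frobenius.

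For stability, take $y \in N_j^+$. For each $k \geq 1$ one has $\iota_{hk+j}(\phi_q(y)) = \iota_{h(k-1)+j}(y)$. When $k \geq 2$ the right-hand side lies in $W_{\dR}^{+,-j}(D)$ by the defining condition of $N_j^+$. The only real work is the case $k=1$, where one must show $\iota_j(y) \in W_{\dR}^{+,-j}(D)$. Here I would first verify that $\iota_j(\lambda_j)$ is a unit in $\bdr^+$: since $\lambda_j = \prod_{k \geq 1} Q_k(Y_j)/p$ with $Q_k(0)=p$, the ``constant term'' of $\lambda_j$ is $1$, and this survives under the extension of $\iota_j$ through the identity $\iota_j = \iota_{j+h} \circ \phi$. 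Consequently $\iota_j$ extends to $\calR^+(Y_j)[1/\lambda_j] \to \bdr^+$, and $\iota_j(y) \in \bdr^+ \otimes_F^{\sigma^{-j}} D$. Effectiveness then gives the inclusion $\bdr^+ \otimes_F^{\sigma^{-j}} D \subset \Fil^0_{-j}(\bdr \otimes_F^{\sigma^{-j}} D) = W_{\dR}^{+,-j}(D)$, so $\phi_q(y) \in N_j^+$.

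For the cokernel assertion, I would choose an $F$-basis $d_1,\dots,d_d$ of $D$ adapted to the filtration $\Fil^\bullet_{-j}$, with jumps $0 \leq b_1 \leq \cdots \leq b_d \leq a_j$ (i.e. $d_i \in \Fil^{b_i}_{-j} D \setminus \Fil^{b_i+1}_{-j} D$ and the spans realize the filtration steps). A term-by-term analysis of the defining conditions $\iota_{hk+j}(y) \in W_{\dR}^{+,-j}(D)$ in this basis, combined with proposition \ref{mdonedim}, shows that $N_j^+ = \bigoplus_{i=1}^d \lambda_j^{-b_i} \calR^+(Y_j) \otimes d_i$ with $\calR^+(Y_j)$-basis $f_i = \lambda_j^{-b_i} \otimes d_i$. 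Using the identity $\phi_q(\lambda_j) = p\lambda_j/Q_1(Y_j)$, one computes $\phi_q(f_i) = (Q_1/p)^{b_i} f_i$ (up to Frobenius-induced mixing among basis elements with the same filtration jump, which does not affect the $Q_1$-content of the matrix). The matrix of $\phi_q \colon \phi_q^*(N_j^+) \to N_j^+$ is therefore block-triangular with diagonal entries $(Q_1/p)^{b_i}$, and the cokernel is $\bigoplus_i \calR^+(Y_j)/Q_1^{b_i}$, which is killed by $Q_1^{\max b_i} = Q_1^{a_j}$.

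The main obstacle is that a basis of $D$ adapted to the filtration need not be $\Gamma_F$-equivariant, even though $\Fil^\bullet_{-j}$ is $\Gamma_F$-stable; so the decomposition of $N_j^+$ above may only be available over a finite extension. However, the annihilator of the cokernel $N_j^+/\phi_q^*(N_j^+)$ is a $\Gamma_F$-invariant ideal of $\calR^+(Y_j)$, so the conclusion that it contains $Q_1(Y_j)^{a_j}$ descends from the algebraic closure. A secondary point to verify is the cleanness of the $\iota_{h+j}$-picture: the estimate $\iota_{h+j}(Q_1^{a_j} y) = t^{a_j} \cdot (\text{unit}) \cdot \iota_{h+j}(y) \in \bdr^+ \otimes_F^{\sigma^{-j}} D$ (using theorem \ref{embdr} and the fact that $W_{\dR}^{+,-j}(D)/\bdr^+ \otimes_F^{\sigma^{-j}} D$ is killed by $t^{a_j}$) confirms, via the isomorphism of proposition \ref{mdonedim}, that the local picture at $Q_1$ is exactly as predicted.
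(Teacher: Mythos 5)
The paper itself gives no argument for this lemma (it simply points to \S 2.2 of \cite{KR09}), so the only question is whether your proof stands on its own. The stability half does: from $\iota_{n+h}\circ\phi_q=\iota_n$, the conditions on $\phi_q(y)$ at the levels $k\geq 2$ are the conditions on $y$ at the levels $k-1\geq 1$, and at $k=1$ one needs $\iota_j(y)\in W_{\dR}^{+,-j}(D)$, which follows from effectivity once one knows that $\iota_j(\lambda_j)$ is a unit of $\bdr^+$ (indeed $\theta(\iota_j(Y_j))=\pi_0=0$ and $Q_k(0)=p$, so $\theta(\iota_j(\lambda_j))=1$). Apart from the slip $\iota_j=\iota_{j+h}\circ\phi_q$ (not $\iota_{j+h}\circ\phi$), this is the standard argument.

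The cokernel half has a genuine gap: the decomposition $N_j^+=\bigoplus_i \lambda_j^{-b_i}\,\calR^+(Y_j)\otimes d_i$ in a filtration-adapted basis is false in general. The conditions defining $N_j^+$ at the various levels $k$ differ from one another by powers of $\phi_q$ acting on $D$ --- this twist is exactly what makes your identity $\iota_{hk+j}(\phi_q(y))=\iota_{h(k-1)+j}(y)$ valid (without it the identity acquires a factor $1\otimes\phi_q$ and $N_j^+$ is not even $\phi_q$-stable) --- and $\phi_q$ need not respect the filtration, so the mixing is not confined to basis vectors with the same jump. Already for $h=1$, take $D=Fe_1\oplus Fe_2$ with $\phi_q(e_1)=e_2$, $\phi_q(e_2)=-e_1$, $\Fil^1D=Fe_1$, $\Fil^2D=0$: the level-$k$ condition allows a simple pole along $Q_k$ in the $e_1$-coordinate for $k$ even and in the $e_2$-coordinate for $k$ odd, so $N^+=\mu_{\mathrm{ev}}^{-1}\calR^+e_1\oplus\mu_{\mathrm{od}}^{-1}\calR^+e_2$ with $\mu_{\mathrm{ev}}=\prod_{m\geq1}Q_{2m}/p$ and $\mu_{\mathrm{od}}=\prod_{m\geq1}Q_{2m-1}/p$, which is not of your form, and the matrix of $\phi_q$ is not triangular with diagonal $(Q_1/p)^{b_i}$. (Your $\Gamma_F$-descent remark addresses a non-issue: the obstruction is $\phi_q$-equivariance, not $\Gamma_F$-equivariance, of the adapted basis.) The correct argument is the local one your last sentence only gestures at, and it must carry the whole weight: $N_j^+$ and $\phi_q^*(N_j^+)$ are free of rank $d$ (proposition \ref{mdonedim}) and agree after inverting $\lambda_j$, so it suffices to compare them along each divisor $Q_k(Y_j)=0$. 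For $k\geq2$ they coincide, since $\phi_q(Q_{k-1}(Y_j))=Q_k(Y_j)$ and the level-$k$ condition for $N_j^+$ is the $\phi_q$-pullback of the level-$(k-1)$ condition; along $Q_1(Y_j)=0$, the map $\phi_q$ sends the relevant points to the locus $Y_j=0$, where $\lambda_j$ is a unit and there is no condition, so $\phi_q^*(N_j^+)$ yields the lattice generated by $D$ while $N_j^+$ yields the $\Fil^0$-lattice, squeezed between it and its $Q_1^{-a_j}$-multiple because $D$ is effective and $\Fil^{a_j+1}D=\{0\}$. Hence the quotient is supported on $Q_1(Y_j)=0$ and killed by $Q_1(Y_j)^{a_j}$; in the example above it is $\calR^+/Q_1$, as it should be.
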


\begin{proof}
This concerns the construction in one variable, so the proof is standard. See for example \S 2.2 of \cite{KR09}.
\end{proof}

\begin{prop}\label{phionmd}
If $D$ is effective, then the $\calR^+(Y)$-module $\mfont^+(D)$ is stable under the Frobenius map $\phi_q$, and $\mfont^+(D) / \phi_q^* (\mfont^+(D))$ is killed by $Q_1(Y_0)^{a_0} \cdots Q_1(Y_{h-1})^{a_{h-1}}$.
\end{prop}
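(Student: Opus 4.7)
The plan is to deduce both statements from the one-variable analogue (Lemma \ref{phinj}) by working with the localizations $\mfont^+(D)[1/f_j]$ and the formulas $\mfont^+(D) = \bigcap_j \mfont^+(D)[1/f_j]$ and $\mfont^+(D)[1/f_j] = \calR^+(Y)[1/f_j] \otimes_{\calR^+(Y_j)} N_j^+$ provided by Theorem \ref{mprcs} and Lemma \ref{mnten}. A key preliminary observation is that
\[ \phi_q(\lambda_i) = \frac{p\, \lambda_i}{Q_1(Y_i)}, \qquad \phi_q(f_j) = \frac{p^{h-1}\, f_j}{\prod_{i \neq j} Q_1(Y_i)}, \]
so that $\phi_q$ preserves each $\calR^+(Y)[1/f_j]$.

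For the $\phi_q$-stability of $\mfont^+(D)$, take $y \in \mfont^+(D)$ and use Lemma \ref{mnten} to write $y = \sum_\ell r_\ell n_\ell$ with $r_\ell \in \calR^+(Y)[1/f_j]$ and $n_\ell \in N_j^+$. Combining the preservation of $\calR^+(Y)[1/f_j]$ just computed with the $\phi_q$-stability of $N_j^+$ from Lemma \ref{phinj} yields $\phi_q(y) \in \mfont^+(D)[1/f_j]$ for every $j$, and intersecting gives $\phi_q(y) \in \mfont^+(D)$. For the cokernel statement, set $P = \prod_j Q_1(Y_j)^{a_j}$. Lemma \ref{phinj} asserts $Q_1(Y_j)^{a_j} N_j^+ \subset \calR^+(Y_j) \cdot \phi_q(N_j^+)$; tensoring with $\calR^+(Y)[1/f_j]$ via Lemma \ref{mnten} and then multiplying by $\prod_{i \neq j} Q_1(Y_i)^{a_i}$, one obtains
\[ P\cdot \mfont^+(D)[1/f_j] \subset \phi_q^*(\mfont^+(D))[1/f_j] \]
for each $j$, so that $P\cdot \mfont^+(D) \subset \bigcap_j \phi_q^*(\mfont^+(D))[1/f_j]$.

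The main obstacle is then to identify this intersection with $\phi_q^*(\mfont^+(D))$ itself. The plan is to show that $\phi_q^*(\mfont^+(D))$ is a reflexive coadmissible $\calR^+(Y)$-module by running the same argument used in the proof of Theorem \ref{mprcs}(1),(3): at each level $n$, the localization $\phi_q^*(\mfont^{[0;s_n]}(D))[1/f_j]$ is free of rank $d$ as the $\phi_q$-pullback of the free module $\mfont^{[0;s_n]}(D)[1/f_j]$ from Theorem \ref{mprcs}(2), and since no height-one prime of $\calR^{[0;s_n]}(Y)$ can contain all of $\lambda_0,\hdots,\lambda_{h-1}$ simultaneously, no height-one prime contains all the $f_j$'s, so proposition 6 of VII.4.2 of \cite{AC61} yields reflexivity of $\phi_q^*(\mfont^{[0;s_n]}(D))$ together with the desired intersection formula. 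Passing to global sections then gives $Py \in \phi_q^*(\mfont^+(D))$ for all $y \in \mfont^+(D)$, completing the proof.
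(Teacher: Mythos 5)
Your first half is fine and is essentially the paper's argument: $\phi_q$ preserves $\calR^+(Y)[1/f_j]$ (via $\phi_q(\lambda_i)=p\lambda_i/Q_1(Y_i)$) and preserves $N_j^+$ by lemma \ref{phinj}, hence preserves each $\mfont^+(D)[1/f_j]$ and their intersection $\mfont^+(D)$; and the containment $P\cdot\mfont^+(D)[1/f_j]\subset\phi_q^*(\mfont^+(D))[1/f_j]$ with $P=\prod_jQ_1(Y_j)^{a_j}$ is correct. The genuine gap is in your last step, where you need $\bigcap_j\phi_q^*(\mfont^+(D))[1/f_j]=\phi_q^*(\mfont^+(D))$. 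You propose to get this by ``rerunning theorem \ref{mprcs}(1),(3)'': free localizations at the $f_j$, no height-one prime containing all the $f_j$, then Bourbaki AC VII.4.2 prop.~6 gives reflexivity \emph{and} the intersection formula. That implication is false: a finitely generated torsion-free module can have free localizations away from a codimension $\geq 2$ locus without being reflexive and without being recovered by the intersection --- for instance $N=(x,y)\subset k[x,y]$ has $N[1/x]$ and $N[1/y]$ free, no height-one prime contains both $x$ and $y$, yet $N$ is not reflexive and $N[1/x]\cap N[1/y]=k[x,y]\neq N$. Note that the paper's proof of theorem \ref{mprcs} runs the logic the other way: the intersection formula is proved first, by hand, in proposition \ref{mintmj}, and reflexivity is then \emph{deduced} from it via Bourbaki. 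So as written your final step is either circular (the intersection formula for $\phi_q^*(\mfont^+(D))$ is essentially what is to be proved) or rests on a false principle.

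The step could be repaired, but with different input: at each finite level $\mfont(D)^{[0;s_n]}$ is a finitely generated reflexive module over the noetherian normal ring $\calR^{[0;s_n]}(Y)$, the map $\phi_q$ is finite free of rank $q^h$, so flat base change preserves reflexivity of the pullback; a finitely generated reflexive module over such a ring is the intersection of its localizations at height-one primes, whence the intersection formula at each level, and one must then check compatibility with global sections and keep track of the level shift $\phi_q:\calR^{[0;s_n]}(Y)\to\calR^{[0;s_{n+h}]}(Y)$. The paper avoids all of this with a sharper bound: since $\Fil^{a_i+1}D=0$, one has $\mfont^+(D)\subset\prod_{i\neq j}\lambda_i^{-a_i}\cdot\calR^+(Y)\otimes_{\calR^+(Y_j)}N_j^+$, so multiplying by $P$ turns each $\lambda_i^{-a_i}$ into $\phi_q(\lambda_i)^{-a_i}$ up to a unit and lemma \ref{phinj} absorbs $Q_1(Y_j)^{a_j}$; using the basis $\{Y^\ell\}$ of $\calR^+(Y)$ over $\phi_q(\calR^+(Y))$ this places $P\cdot x$ in $\oplus_\ell Y^\ell\cdot\phi_q(\mfont^+(D)[1/f_j])$ for every $j$ --- a strictly smaller module than your $\phi_q^*(\mfont^+(D))[1/f_j]$, since $\prod_{i\neq j}Q_1(Y_i)\notin\phi_q(\calR^+(Y))$ --- and the intersection over $j$ is then computed coordinatewise in the $Y^\ell$-decomposition, reducing directly to $\bigcap_j\mfont^+(D)[1/f_j]=\mfont^+(D)$, which is already known.
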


\begin{proof}
By (2) of theorem \ref{mprcs}, we have $\mfont^+(D) = \cap_j \mfont^+(D)[1/f_j]$ and by lemma \ref{mnten}, $\mfont^+(D)[1/f_j] = \calR^+(Y)[1/f_j] \otimes_{\calR^+(Y_j)} N_j^+$. Lemma \ref{phinj} implies that $N_j^+$ is stable under $\phi_q$, and so the same is true of $\mfont^+(D)[1/f_j]$ and hence $\mfont^+(D)$.

If $x \in \mfont^+(D)$, then $x \in \mfont^+(D)[1/f_j] = \calR^+(Y)[1/f_j] \otimes_{\calR^+(Y_j)} N_j^+$. Note however that at each $k = i \neq j \mod{h}$, the coefficients of $x$ can have a pole of order at most $a_i$ since $\Fil^{a_i+1} D = \{ 0 \}$. This implies the more precise estimate 
\[ \mfont^+(D) \subset \prod_{i \neq j} \lambda_i^{-a_i} \cdot \calR^+(Y) \otimes_{\calR^+(Y_j)} N_j^+. \]
The $\phi_q(\calR^+(Y))$-module $\calR^+(Y)$ is free of rank $q^h$, with basis $\{ Y^\ell$, $\ell \in \{ 0, \hdots, q-1 \}^h\}$. We therefore have
\begin{align*} 
Q_1(Y_0)^{a_0} \cdots Q_1(Y_{h-1})^{a_{h-1}} \cdot x  & \in \prod_{i \neq j}  (\lambda_i/Q_1(Y_i)) ^{-a_i} \cdot \calR^+(Y) \otimes_{\calR^+(Y_j)} Q_1(Y_j)^{a_j} \cdot N_j^+ \\ 
& \subset \oplus_\ell Y^\ell \cdot \phi_q( \calR^+(Y)[1/f_j] \otimes_{\calR^+(Y_j)} N_j^+ ).
\end{align*}
This implies that 
\begin{align*} 
Q_1(Y_0)^{a_0} \cdots Q_1(Y_{h-1})^{a_{h-1}} \cdot x \in \cap_j \oplus_\ell Y^\ell \cdot \phi_q(\mfont^+(D)[1/f_j]) = \phi_q^*( \mfont^+(D)),
\end{align*}
which proves the second claim.
\end{proof}

\begin{rema}\label{moregend}
Instead of working with a $D$ where the filtrations are defined on $D$, we could have asked for the filtrations to be defined on $F_n \otimes_F D$ for some $n \geq 1$. The construction and properties of $\mfont^+(D)$ are then basically unchanged, but the annihilator of $\mfont^+(D) / \phi_q^* (\mfont^+(D))$ is possibly more complicated than in proposition \ref{phionmd}. This applies in particular to representations of $G_F$ that become crystalline when restricted to $G_{F_n}$ for some $n \geq 1$.
\end{rema}

\begin{defi}\label{defpgm}
A $(\phi_q,\Gamma_F)$-module over $\calR(Y)$ is a $\calR(Y)$-module $\mfont$ that is of the form $\mfont = \calR(Y) \otimes_{\calR^{[s;+\infty[}(Y)} \mfont^{[s;+\infty[}$ where $\mfont^{[s;+\infty[}$ is a coadmissible $\calR^{[s;+\infty[}(Y)$-module, endowed with a semilinear Frobenius map $\phi_q : \mfont^{[s;+\infty[} \to \mfont^{[qs;+\infty[}$, such that $\phi_q^*(\mfont^{[s;+\infty[}) = \mfont^{[qs;+\infty[}$, and a continuous and compatible action of $\Gamma_F$.
\end{defi}

\begin{rema}\label{defpgmrem}
In the definition above, it would seem natural to impose some additional condition on $\mfont$, such as ``torsion-free''. All the $(\phi_q,\Gamma_F)$-modules over $\calR(Y)$ that are constructed in this article are actually reflexive. The definition above should be considered provisional, until we have a better idea of which objects we want to exclude. Note that in the absence of flatness, tensor products may behave badly.
\end{rema}

If $D$ is a $\phi_q$-module with an action of $\Gamma_F$ and $h$ filtrations and if $\ell \in \ZZ$, let $D(\ell)$ denote the same $\phi_q$-module with an action of $\Gamma_F$, but with $\Fil^k_j(D(\ell)) = (\Fil^{k-\ell} D )(\ell)$. Note that $D(\ell)$ is effective if $\ell \gg 0$.

\begin{lemm}\label{twmd}
We have $\mfont(D(\ell)) = \lambda^{-\ell} \cdot \mfont(D)$.
\end{lemm}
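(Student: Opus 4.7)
My plan is to reduce the equality $\mfont(D(\ell)) = \lambda^{-\ell}\mfont(D)$ to the assertion that, for every $k \geq h$, $\iota_k(\lambda) \in \bdr^+$ equals $t$ times a unit (with $t$ the usual period of $p$-adic Hodge theory), together with a bookkeeping of how the filtrations transform under the twist. Since $D(\ell)$ coincides with $D$ as an $F$-vector space equipped with the same $\phi_q$ and $\Gamma_F$-action, the ambient module $\calR^+(Y)[1/\lambda]\otimes_F D$ is literally unchanged; only the filtrations move. Unwinding the tensor-product filtration with $\Fil^k_j(D(\ell)) = \Fil^{k-\ell} D$, one immediately gets $W_{\dR}^{+,-k}(D(\ell)) = t^{-\ell}\cdot W_{\dR}^{+,-k}(D)$ inside $\bdr \otimes_F^{\sigma^{-k}} D$.

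The main step, and essentially the only nontrivial one, is to establish $\iota_k(\lambda) = t \cdot v_k$ with $v_k \in (\bdr^+)^\times$. I would argue this factor by factor. Write $k = hm+i$ with $0 \leq i \leq h-1$ and $m \geq 1$. By theorem \ref{embdr}, $\iota_k(\log_{\LT}(Y_j))$ lies in $\Fil^1 \bdr^+$ if and only if $j = i$. Hence for $j \neq i$, $\iota_k(\log_{\LT}(Y_j))$ is a unit of $\bdr^+$, and since $\bdr^+$ is a local ring, the factorization $\log_{\LT}(Y_j) = Y_j\lambda_j$ forces both $\iota_k(Y_j)$ and $\iota_k(\lambda_j)$ to be units as well. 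For the remaining index $j = i$, $\iota_k(Y_i) = \phi_q^{-m}(u)$ has $\theta$-image $\pi_m \neq 0$, so it too is a unit of $\bdr^+$; then proposition \ref{imgt} together with $\iota_k(t) = p^{-k}t$ and the units already identified forces $\iota_k(\log_{\LT}(Y_i))$ to be a generator of $\Fil^1 \bdr^+$, whence $\iota_k(\lambda_i) = \iota_k(\log_{\LT}(Y_i))/\iota_k(Y_i)$ is also a generator. Multiplying the $h$ factors $\iota_k(\lambda) = \prod_j \iota_k(\lambda_j)$ yields the claim, since one factor is $t$ times a unit and the rest are units.

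Once this is in place, for any $y \in \calR^+(Y)[1/\lambda] \otimes_F D$ and any $k \geq h$, the relation $\iota_k(\lambda^\ell y) = v_k^\ell\, t^\ell\, \iota_k(y)$ shows that $\iota_k(y) \in t^{-\ell} W_{\dR}^{+,-k}(D)$ if and only if $\iota_k(\lambda^\ell y) \in W_{\dR}^{+,-k}(D)$; hence $y \in \mfont^+(D(\ell))$ if and only if $\lambda^\ell y \in \mfont^+(D)$, and so $\mfont^+(D(\ell)) = \lambda^{-\ell}\mfont^+(D)$. Tensoring over $\calR^+(Y)$ with $\calR(Y)$ then yields the lemma. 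The valuation computation $\iota_k(\lambda) = t \cdot v_k$ is the only real obstacle; the rest is a routine translation through the definitions.
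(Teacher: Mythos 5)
Your proof is correct and is essentially the paper's argument: the paper simply states that $\mfont^+(D(\ell)) = \lambda^{-\ell}\cdot\mfont^+(D)$ ``follows from the definition,'' and your write-up just makes this explicit by unwinding the shifted filtration (so $W_{\dR}^{+,-k}(D(\ell)) = t^{-\ell}W_{\dR}^{+,-k}(D)$) and using the fact, already recorded in the paper in \S\ref{constpgm} and proposition \ref{imgt}, that $\iota_k(\lambda)$ is $t$ times a unit of $\bdr^+$ for $k\geq h$. No changes needed.
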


\begin{proof}
The fact that $\mfont^+(D(\ell)) = \lambda^{-\ell} \cdot \mfont^+(D)$ follows from the definition.
\end{proof}

\begin{theo}\label{thbone}
If $D$ is a $\phi_q$-module with an action of $\Gamma_F$ and $h$ filtrations as above, then $\calR(Y) \otimes_{\calR^+(Y)} \mfont^+(D)$ is a $(\phi_q,\Gamma_F)$-module over $\calR(Y)$.
\end{theo}

\begin{proof}
If $D$ is effective, then this follows from theorem \ref{mprcs} and proposition \ref{phionmd}. If $D$ is not effective, then $D(\ell)$ is effective if $\ell \gg 0$, and the theorem follows from the effective case and lemma \ref{twmd}.
\end{proof}

\begin{rema}\label{linkisin}
In \cite{KR09}, Kisin and Ren construct some $(\phi_q,\Gamma_F)$-modules $\mfont^+_{\mathrm{KR}}(D)$ in one variable, over the ring $\calR^+(Y_0)$, from the data of a $D$ like ours for which the filtration $\Fil_j^\bullet$ is trivial for $j\neq 0$. For those $D$, we have $\mfont^+(D) = \calR^+(Y) \otimes_{\calR^+(Y_0)} \mfont^+_{\mathrm{KR}}(D)$. More generally, our construction shows that $\mfont^+(D)$ comes by extension of scalars from a $(\phi_q,\Gamma_F)$-module in as many variables as there are nontrivial filtrations among the $\Fil_j^\bullet$.
\end{rema}

\begin{prop}\label{surjlatt}
If $n=hk+j \geq h$, then the map 
\[ \bdr^+ \otimes_{\iota_n(\calR^+(Y))} \iota_n(\mfont^+(D)) \to \Fil_{-j}^0(\bdr \otimes_F^{\sigma^{-j}} D) \] 
is an isomorphism.
\end{prop}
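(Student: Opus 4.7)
The plan is to reduce to the one-variable situation of Proposition \ref{mdonedim} by inverting $f_j$, where $j$ is the residue of $n$ modulo $h$. Write $n = hk + j$ with $0 \leq j \leq h-1$ and $k \geq 1$ (the latter forced by $n \geq h$). The crucial technical ingredient will be that $\iota_n(f_j) \in (\bdr^+)^\times$; the remark preceding Definition \ref{defmd} only covers the case $k \neq j \bmod{h}$, but the same conclusion remains valid in our situation, where $n \equiv j \pmod{h}$.

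First I would verify that $\iota_n(f_j)$ is a unit of $\bdr^+$. Using $\lambda \cdot Y_0 \cdots Y_{h-1} = a \cdot t$ for some $a \in \Qp^\times$ (from the discussion preceding Definition \ref{defiotan}) together with $\iota_n(t) = p^{-n}\, t$ in $\bdr^+$, one computes
\[ \iota_n(f_j) \;=\; \frac{a\, p^{-n}\, t}{\bigl(\prod_{i \neq j} \iota_n(Y_i)\bigr) \cdot \iota_n(\log_{\LT}(Y_j))}. \]
Now $\iota_n(Y_j) = \phi_q^{-k}(u)$ has $\theta$-image $\pi_k \neq 0$, so $\iota_n(\log_{\LT}(Y_j)) = \log_{\LT}(\phi_q^{-k}(u))$ has $\theta$-image $\log_{\LT}(\pi_k) = 0$ because $\pi_k$ is an $\LT_h$-torsion point. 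Since $\log_{\LT}'(\pi_k) \neq 0$ (which follows from the product formula $\log_{\LT}(T) = T \prod_\ell Q_\ell(T)/p$ together with $Q_\ell(\pi_k) \neq 0$ for $\ell \neq k$ and the separability of $Q_k$), this element generates $\Fil^1 \bdr^+$ modulo $\Fil^2$, and so differs from $t$ by a unit of $\bdr^+$. For $i \neq j$, the element $\phi^{i-n}(u)$ has $\theta$-image of positive but nontrivial valuation, which is not an $\LT_h$-torsion point (as one checks by the valuation considerations analogous to Proposition \ref{imgt}), so both $\iota_n(Y_i)$ and $\iota_n(\log_{\LT}(Y_i))$ are units. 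Assembling these factors yields $\iota_n(f_j) \in (\bdr^+)^\times$.

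The rest is formal. Because $\iota_n(f_j)$ is a unit of $\bdr^+$, inverting it commutes with the base change to $\bdr^+$, and combined with Lemma \ref{mnten} this gives
\[ \bdr^+ \otimes_{\iota_n(\calR^+(Y))} \iota_n(\mfont^+(D)) \;=\; \bdr^+ \otimes_{\iota_n(\calR^+(Y_j))} \iota_n(N_j^+). \]
The restriction of $\iota_n$ to $\calR^+(Y_j)$ sends $Y_j$ to $\phi^{-n}(\phi^j(u)) = \phi_q^{-k}(u)$, so it agrees with the embedding $\phi_q^{-k}\phi^{-j}$ appearing in Proposition \ref{mdonedim}. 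That proposition then identifies the right-hand side with $W_{\dR}^{+,-j}(D) = \Fil_{-j}^0(\bdr \otimes_F^{\sigma^{-j}} D)$, which is the target in the statement (since $\phi_q = \sigma^h$ acts trivially on $F$, so $\sigma^{-n} = \sigma^{-j}$ on $F$), and under these identifications the composite is the natural map.

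The main obstacle is the verification that $\iota_n(f_j)$ is a unit of $\bdr^+$: the remaining manipulations follow formally from Lemma \ref{mnten} and Proposition \ref{mdonedim}, but the unit property hinges on the specific feature that the nontrivial torsion points of $\LT_h$ are simple zeros of $\log_{\LT}$, while the other $\iota_n(Y_i)$ avoid torsion altogether.
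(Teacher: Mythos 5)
Your argument is correct and follows essentially the same route as the paper's proof: reduce via Lemma \ref{mnten} to the one-variable module $N_j^+$ and apply Proposition \ref{mdonedim}, the key point being that $\iota_n(f_j)$ is a unit of $\bdr^+$ precisely because $n \equiv j \bmod{h}$ (you are right that the remark before Definition \ref{defmd} has the congruence the wrong way round; the paper's proof uses the fact in the form you verify). The only detail left implicit in your unit computation is that $\phi_q^{-k}(u)-\pi_k$ generates $\Fil^1 \bdr^+$ (needed for your ``mod $\Fil^2$'' step); it is easily supplied, e.g.\ by noting $\iota_n(\log_{\LT}(Y_j)) = \log_{\LT}(\phi_q^{-k}(u)) = p^{-k}\log_{\LT}(u)$, which is $t$ times a unit of $\bdr^+$.
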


\begin{proof}
We have $\bdr^+ \otimes_{\iota_n(\calR^+(Y_j))} \iota_n(N_j^+) = \Fil_{-j}^0(\bdr \otimes_F^{\sigma^{-j}} D)$, for example by lemma II.1.5 of \cite{LB10}. If $x \in \calR^+(Y) \otimes_{\calR^+(Y_j)} N_j^+$, then there exists $c \gg 0$ such that $f_j^c x \in \mfont^+(D)$, and the lemma results from the fact that $\iota_n(f_j)$ is a unit of $\bdr^+$.
\end{proof}

Suppose now that $D$ comes from an $F$-linear crystalline representation $V$ of $G_F$ as in example \ref{exofd}. In this case, $\Fil_j^0(\bdr \otimes_F^{\sigma^j} D) = \bdr^+ \otimes_{F}^{\phi^j} V$. Moreover, one recovers $V$ from $D$ by the formula: 
\[ V = \{ y \in (\btrigplus[1/t] \otimes_F D)^{\phi_q=1},\ y \in \Fil_j^0(\bdr \otimes_F^{\sigma^j} D) \text{ for all } 0 \leq j \leq h-1\}. \]

Recall that we have constructed in \S\ref{iotanlt} an injective map $\calR^+(Y) \to \btrigplus$. This way we get a map 
\[ \btrigplus \otimes_{\calR^+(Y)} \mfont^+(D) \to \btrigplus[1/t] \otimes_F D \to \btrigplus[1/t] \otimes_F V. \]

Let $\btrig{,r}{}$ be the rings defined in \S 2.3 \cite{LB2}. Recall that $n(r)$ is the smallest $n$ such that $r \leq p^{n-1}(p-1)$. We have the following lemma.

\begin{lemm}\label{kot}
If $y \in \btrig{,r}{}[1/t]$ satisfies $\phi^{-n}(y) \in \bdr^+$ for all $n \geq n(r)$, then $y \in \btrig{,r}{}$.
\end{lemm}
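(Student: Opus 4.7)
The plan is to induct on the minimal non-negative integer $m$ such that $t^m y \in \btrig{,r}{}$. The base case $m=0$ is the desired conclusion, so it suffices to rule out $m \geq 1$. Suppose $m \geq 1$ and set $z = t^m y \in \btrig{,r}{}$. I will show that $t$ divides $z$ in $\btrig{,r}{}$, contradicting the minimality of $m$ via $y = (z/t)/t^{m-1}$.

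For each $n \geq n(r)$, the element $\phi^{-n}(t)$ is a uniformizer of the complete discrete valuation ring $\bdr^+$, so the hypothesis $\phi^{-n}(y) \in \bdr^+$ gives
\[ \phi^{-n}(z) \;=\; \phi^{-n}(t)^m \cdot \phi^{-n}(y) \;\in\; \phi^{-n}(t) \cdot \bdr^+. \]
The proof therefore reduces to the following divisibility claim: if $z \in \btrig{,r}{}$ satisfies $\phi^{-n}(z) \in \phi^{-n}(t) \bdr^+$ for all $n \geq n(r)$, then $z \in t \cdot \btrig{,r}{}$.

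To establish this, I would use the factorization $t = \pi \cdot \prod_{k \geq 1} Q^{\cyc}_k(\pi)/p$ recalled in the proof of proposition \ref{imgt}. For $k < n(r)$, the zero $\zeta_{p^k}-1$ of $Q^{\cyc}_k(\pi)$ lies outside the annulus cut out by $\btrig{,r}{}$, so $Q^{\cyc}_k(\pi)$ is a unit of $\btrig{,r}{}$. For $k \geq n(r)$, a Weierstrass-type argument (analogous to proposition \ref{weierdivcirc}) shows that $Q^{\cyc}_k(\pi)$ is a prime element of $\btrig{,r}{}$ and that $\theta \circ \phi^{-k}$ induces an isomorphism $\btrig{,r}{}/Q^{\cyc}_k(\pi)\btrig{,r}{} \simeq \Cp$. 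Consequently, the condition $\phi^{-k}(z) \in \phi^{-k}(t) \bdr^+$ forces $Q^{\cyc}_k(\pi) \mid z$ in $\btrig{,r}{}$ for every $k \geq n(r)$. One then dividies $z$ successively by $Q^{\cyc}_{n(r)}(\pi), Q^{\cyc}_{n(r)+1}(\pi),\ldots$ and checks, using the Fr\'echet topology on $\btrig{,r}{}$, that the quotients converge; since the resulting infinite product equals $t$ up to the unit $\prod_{k < n(r)} Q^{\cyc}_k(\pi)/p$, one concludes $t \mid z$.

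The main obstacle is the infinite Weierstrass factorization and the attendant convergence in the Fr\'echet topology; the individual divisibility by each prime $Q^{\cyc}_k(\pi)$ is relatively standard, but passing to the infinite product requires controlling the norms of the successive quotients uniformly on each subannulus $[r;s]$ of $[r;+\infty[$. This is carried out in detail for the cyclotomic Robba ring in \S 2 of \cite{LB2}, so the plan is either to adapt those arguments directly or to cite them after reducing to the stated divisibility claim.
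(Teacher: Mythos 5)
Your overall strategy is the right one, and it is essentially the standard argument underlying this statement; note that the paper itself gives no argument at all here, its ``proof'' being a pointer to lemma 1.1 of \cite{LB12} and the proof of proposition 3.2 there. Your reduction (induct on the minimal $m$ with $t^m y \in \btrig{,r}{}$, observe that $\phi^{-n}(t)=p^{-n}t$ is a uniformizer of $\bdr^+$, and reduce to: $\theta(\phi^{-n}(z))=0$ for all $n \geq n(r)$ implies $t \mid z$ in $\btrig{,r}{}$) is correct, and so is the treatment of the factors $Q^{\cyc}_k(\pi)$ with $k<n(r)$: these are already units in the one-variable subring $\mathbf{B}^{\dagger,r}_{\mathrm{rig},\Qp}$ since their zeros lie outside the annulus, hence units in $\btrig{,r}{}$.

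The genuine gap is in how you justify the key divisibility step. You claim that ``a Weierstrass-type argument (analogous to proposition \ref{weierdivcirc})'' shows that $Q^{\cyc}_k(\pi)$ is prime in $\btrig{,r}{}$ with $\btrig{,r}{}/Q^{\cyc}_k(\pi) \simeq \Cp$ via $\theta\circ\phi^{-k}$. Propositions \ref{weierdiv} and \ref{weierdivcirc} concern rings of Laurent series in the variables $Y_j$; the ring $\btrig{,r}{}$ of \S 2.3 of \cite{LB2} is the Fr\'echet completion of the \emph{tilde} ring $\atdag{,r}[1/p]$, built from Witt vectors over the perfect ring $\etplus$, and it is in no sense a ring of Laurent series in $\pi=[\varepsilon]-1$; Weierstrass division with respect to $\pi$ is simply not available there. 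The fact you need is true --- $\ker(\theta\circ\phi^{-k}:\btrig{,r}{}\to\Cp)$ is principal, generated by $\phi^k(\omega)$ where $\omega=\pi/\phi^{-1}(\pi)$, and $Q^{\cyc}_k(\pi)=\phi^k(\omega)$ --- but it is proved by a Witt-vector/valuation argument (principality of $\ker\theta$ in $\atplus$ and its overconvergent analogues, using $\ve$ on $\etplus$, then passage to the Fr\'echet completion), which is exactly the content of the results the paper cites; it cannot be obtained by mimicking proposition \ref{weierdivcirc}. The second nontrivial point, which you acknowledge but defer, is the passage from divisibility by each $Q^{\cyc}_k(\pi)$, $k\geq n(r)$, to divisibility by the infinite product: one must check that the successive quotients converge for the valuations $V(\cdot,[r;s])$ on every closed subinterval, using that $Q^{\cyc}_k(\pi)/p \to 1$ there. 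So the architecture of your proof is sound, but as written the two analytic inputs that carry the actual weight (principality of the kernels in the tilde ring, and convergence of the infinite division) are not established; they must either be proved by the arguments of \cite{LB2}/\cite{LB12} or cited, which is what the paper does.
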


\begin{proof}
See lemma 1.1 of \cite{LB12} and the proof of proposition 3.2 in ibid.
\end{proof}

\begin{theo}\label{btrisom}
If $D$ comes from a crystalline representation $V$, and if $r \geq p^{h-1}(p-1)$, then the map above gives rise to an isomorphism 
\[ \btrig{,r}{} \otimes_{\calR^+(Y)} \mfont^+(D) \to \btrig{,r}{} \otimes_F V. \]
\end{theo}

\begin{proof}
We first check that the image of the map above belongs to $\btrig{,r}{} \otimes_F V$. If $y \in \btrig{,r}{} \otimes_{\calR^+(Y)} \mfont^+(D)$, then its image is in $\btrig{,r}{}[1/t] \otimes_F V$ and satisfies $\phi^{-n}(y) \in \bdr^+ \otimes_F^{\sigma^{-n}} V$ for all $n \geq n(r)$, so the assertion follows from lemma \ref{kot}.

We now prove that $\btrig{,r}{} \otimes_{\calR^+(Y)} \mfont^+(D)$ is a free $\btrig{,r}{}$-module of rank $d$. By (2) of theorem \ref{mprcs}, $\mfont^+(D)[1/f_j]$ is a free $\calR^+(Y)[1/f_j]$-module of rank $d$, and therefore $\btrig{,r}{}[1/f_j] \otimes_{\calR^+(Y)} \mfont^+(D)$ is a free $\btrig{,r}{}[1/f_j]$-module of rank $d$ for all $j$. The ring $\btrig{,r}{}$ is a B\'ezout ring by theorem 2.9.6 of \cite{KSF}, and the elements $f_0,\hdots,f_{h-1}$ have no common factor. They therefore generate the unit ideal of $\btrig{,r}{}$, and  $\btrig{,r}{} \otimes_{\calR^+(Y)} \mfont^+(D)$ is projective of rank $d$ by theorem 1 of II.5.2 of \cite{AC61}. Since $\btrig{,r}{}$ is a B\'ezout ring, $\btrig{,r}{} \otimes_{\calR^+(Y)} \mfont^+(D)$ is free of rank $d$. By proposition \ref{surjlatt}, the map 
\[ \bdr^+ \otimes_{\iota_n(\btrig{,r}{})} \iota_n(\btrig{,r}{} \otimes_{\calR^+(Y)} \mfont^+(D)) \to \bdr^+ \otimes_{F}^{\phi^{-n}} V \] 
is an isomorphism if $n \geq n(r)$. The two $\btrig{,r}{}$-modules $\btrig{,r}{} \otimes_{\calR^+(Y)} \mfont^+(D)$ and $\btrig{,r}{} \otimes_F V$ therefore have the same localizations at all $n \geq n(r)$, and are both stable under $G_F$, so that they are equal by the same argument as in the proof of lemma 2.2.2 of \cite{LB8} (the idea is to take determinants, so that one is reduced to showing that if $x \in \btrig{,r}{}$ generates an ideal stable under $G_F$, and has the property that $\iota_n(x)$ is a unit of $\bdr^+$ for all $n \geq n(r)$, then $x$ is a unit of $\btrig{,r}{}$).
\end{proof}

\begin{rema}\label{btrisomphin}
If $D$ comes from a crystalline representation $V$, and if $n \geq 0$, then there is likewise an isomorphism $\btrig{,r}{} \otimes^{\phi^{-n}}_{\calR^+(Y)} \mfont^+(D) \to \btrig{,r}{} \otimes^{\sigma^{-n}}_F V$ for $r \gg 0$.
\end{rema}

\section{Crystalline $(\phi_q,\Gamma_F)$-modules}\label{pmfedp}

Let $\mfont$ be a $(\phi_q,\Gamma_F)$-module over $\calR(Y)$. In this section, we define what it means for $\mfont$ to be crystalline, and we prove that every crystalline $(\phi_q,\Gamma_F)$-module $\mfont$ is of the form $\mfont=\mfont(D)$, where $D$ is a filtered $\phi_q$-module on which the action of $G_F$ is trivial. The results are similar to those of \cite{LB10}, which deals with the cyclotomic case.

\begin{lemm}\label{invfro}
We have $\Frac(\calR(Y))^{\Gamma_F}=F$.
\end{lemm}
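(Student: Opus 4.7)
The plan is to reduce to the classical fact $\bdr^{G_F}=F$ by means of the injective, $G_F$-equivariant embeddings $\iota_n$ of Section \ref{iotanlt}. The inclusion $F\subset \Frac(\calR(Y))^{\Gamma_F}$ is clear, since $\Gamma_F$ acts $F$-linearly, so only the reverse direction requires work.

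Given $f\in\Frac(\calR(Y))^{\Gamma_F}$, I write $f=g/h$ with $g,h\in\calR(Y)$ and $h\neq 0$. Since $\calR(Y)=\bigcup_{s}\calR^{[s;+\infty[}(Y)$, I may choose $s$ large enough that $g$ and $h$ both lie in $\calR^{[s;+\infty[}(Y)$, and then pick $n\geq h$ with $s_n\geq s$, so that $g,h\in\calR^{[s_n;s_n]}(Y)$. By corollary \ref{iotaninj}, the map $\iota_n$ is injective on this ring, so $\iota_n(h)\neq 0$ in $\bdr^+$, and I can form $\iota_n(f):=\iota_n(g)/\iota_n(h)\in\bdr$.

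The next step is to check that $\iota_n$ is $G_F$-equivariant, so that $\iota_n(f)\in\bdr^{G_F}$. The element $u\in\atplus$ satisfies $g(u)=[\chilt(g)](u)$, so $u$ and all its $\phi$-translates are fixed by $H_F=\ker(\chilt)$. Combined with the fact that elements of $\calR^{[s_n;s_n]}(Y)$ have coefficients in $F$, this shows that the image of $\iota_n$ lies in $\bdr^{H_F}$. The $\Gamma_F$-equivariance follows directly from the formulas $a(Y_j)=[\sigma^j(a)](Y_j)$ and $g(\phi^j(u))=[\sigma^j(\chilt(g))](\phi^j(u))$. Therefore $\iota_n(f)\in\bdr^{G_F}=F$; writing $\iota_n(f)=\alpha$, the identity $\iota_n(g-\alpha h)=0$ together with the injectivity of $\iota_n$ forces $g=\alpha h$ in $\calR^{[s_n;s_n]}(Y)$, hence in $\calR(Y)$, so $f=\alpha\in F$.

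No step of this plan poses a serious obstacle: the whole argument rests on having the injective specialization $\iota_n$, which is already established, and on the standard identity $\bdr^{G_F}=F$. The only minor verification is the $H_F$-invariance of the image of $\iota_n$, which is immediate from $\chilt(H_F)=\{1\}$ and the fact that coefficients lie in $F$.
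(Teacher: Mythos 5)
Your argument is correct and follows essentially the same route as the paper: write the invariant element as a quotient in $\calR^{[s_n;s_n]}(Y)$, push it into $\bdr$ via the $G_F$-equivariant map $\iota_n$ (proposition \ref{embti}), invoke $\bdr^{G_F}=F$, and use the injectivity of corollary \ref{iotaninj} to descend back (a step the paper leaves implicit). The only cosmetic point is that $\iota_n$ is $\sigma^{-n}$-semilinear on the coefficient field $F$, so the relation you obtain is $g=\sigma^{n}(\alpha)h$ rather than $g=\alpha h$; the conclusion $f\in F$ is unchanged.
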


\begin{proof}
If $x  \in \Frac(\calR(Y))^{\Gamma_F}$, then we can write $x=a/b$ with $a, b \in \calR^{[s_n;s_n]}(Y)$ for some $n \gg 0$. By proposition \ref{embti}, the series $a(u,\hdots,\phi^{h-1}(u))$ and $b(u,\hdots,\phi^{h-1}(u))$ converge in $\bt^{[r_n;r_n]}$. We can therefore see $\phi^{-n}(a)$ and $\phi^{-n}(b)$ as elements of $\bdr^+$, which satisfy $\phi^{-n}(a) / \phi^{-n}(b) \in \bdr^{G_F}$. The lemma now follows from the fact that $\bdr^{G_F} = F$.
\end{proof}

If $\mfont$ is a $(\phi_q,\Gamma_F)$-module over $\calR(Y)$, then let $\dcris(\mfont) =  (\calR(Y)[1/t] \otimes_{\calR(Y)} \mfont)^{\Gamma_F}$.

\begin{coro}\label{maxdiminv}
If $\mfont$ is a $(\phi_q,\Gamma_F)$-module over $\calR(Y)$, then $\dim \dcris(\mfont) \leq \dim \Frac(\calR(Y)) \otimes_{\calR(Y)} \mfont$.
\end{coro}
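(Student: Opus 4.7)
The plan is to show that the natural $F$-linear map
\[ \dcris(\mfont) \longrightarrow \calR(Y)[1/t] \otimes_{\calR(Y)} \mfont \longrightarrow \Frac(\calR(Y)) \otimes_{\calR(Y)} \mfont \]
extends to a $\Frac(\calR(Y))$-linear \emph{injection}
\[ \Frac(\calR(Y)) \otimes_F \dcris(\mfont) \hookrightarrow \Frac(\calR(Y)) \otimes_{\calR(Y)} \mfont. \]
Once this is known, comparing $\Frac(\calR(Y))$-dimensions yields
\[ \dim_F \dcris(\mfont) = \dim_{\Frac(\calR(Y))} \Frac(\calR(Y)) \otimes_F \dcris(\mfont) \leq \dim_{\Frac(\calR(Y))} \Frac(\calR(Y)) \otimes_{\calR(Y)} \mfont, \]
which is the claim. (Here I use that $t$ is a nonzerodivisor of $\calR(Y)$, so $\Frac(\calR(Y)[1/t])=\Frac(\calR(Y))$, and the first arrow factors through an inclusion into the localization at $t$.)

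To establish injectivity, I would argue by the standard Dedekind-style linear independence trick. Suppose for contradiction that $v_1,\ldots,v_n \in \dcris(\mfont)$ are $F$-linearly independent but become $\Frac(\calR(Y))$-linearly dependent in $\Frac(\calR(Y)) \otimes_{\calR(Y)} \mfont$. Choose a dependence relation $\sum_{i=1}^n a_i v_i = 0$ of \emph{minimal length}, with $a_i \in \Frac(\calR(Y))$ not all zero; rescaling, we may assume $a_1 = 1$. For any $g \in \Gamma_F$, the invariance $g(v_i)=v_i$ gives $\sum g(a_i) v_i = 0$, and subtracting yields $\sum (g(a_i)-a_i) v_i = 0$, a relation of strictly smaller length (since $g(a_1)-a_1=0$). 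Minimality forces $g(a_i)=a_i$ for all $i$ and all $g \in \Gamma_F$, so each $a_i \in \Frac(\calR(Y))^{\Gamma_F} = F$ by lemma \ref{invfro}. But then $\sum a_i v_i = 0$ is a nontrivial $F$-linear relation among the $v_i$, contradicting their $F$-linear independence.

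The only non-routine input is precisely lemma \ref{invfro}, which has already been established, so no further obstacle is expected: the main point is simply to set up the inclusion of $\dcris(\mfont)$ into the $\Frac(\calR(Y))$-tensor product correctly and apply the descent argument above.
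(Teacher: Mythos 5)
Your proof is correct and is essentially the paper's own argument: the paper's proof of corollary \ref{maxdiminv} simply says that ``by a standard argument'' lemma \ref{invfro} gives the injectivity of $\Frac(\calR(Y)) \otimes_F \dcris(\mfont) \to \Frac(\calR(Y)) \otimes_{\calR(Y)} \mfont$, and your minimal-length (Dedekind-style) descent is exactly that standard argument written out. So you take the same route, just with the details of the invoked standard step made explicit.
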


\begin{proof}
By a standard argument, lemma \ref{invfro} implies that the map 
\[ \Frac(\calR(Y)) \otimes_F \dcris(V) \to \Frac(\calR(Y)) \otimes_{\calR(Y)} \mfont \] is injective.
\end{proof}

\begin{defi}\label{defpgcr}
We say that a $(\phi_q,\Gamma_F)$-module $\mfont$ over $\calR(Y)$ is crystalline if 
\begin{enumerate}
\item for some $s$, $\mfont^{[s;+\infty[} [1/f_j]$ is a free $\calR(Y)^{[s;+\infty[} [1/f_j]$-module of finite rank $d$;
\item $\mfont^{[s;+\infty[} = \cap_{j=0}^{h-1} \mfont^{[s;+\infty[} [1/f_j]$;
\item we have $\dim \dcris(\mfont) = d$.
\end{enumerate}
\end{defi}

For example, if $D$ is a filtered $\phi_q$-module on which the action of $G_F$ is trivial, then $\mfont(D)$ is a crystalline $(\phi_q,\Gamma_F)$-module.

\begin{prop}
\label{idealgam}
If $f \in \calR^{[s;+\infty[}(Y)$ generates an ideal of $\calR^{[s;+\infty[}(Y)$ that is stable under $\Gamma_F$, then $f = u \cdot \prod_{j=0}^{h-1} \prod_{n \geq n(s)} (Q_n(Y_j)/p)^{a_{n,j}}$ where $u$ is a unit and $a_{n,j} \in \ZZ_{\geq 0}$.
\end{prop}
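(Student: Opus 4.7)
The plan is to extract the $Q_n(Y_j)/p$ factors from $f$ by Weierstrass division, and then show that the remaining factor has no zeros on the polyannulus by classifying the irreducible hypersurfaces stabilized by an open subgroup of $\Gamma_F$. Assuming $f \neq 0$, for each $(n,j)$ with $n \geq n(s)$ and $0 \leq j \leq h-1$ let $a_{n,j} \geq 0$ be the largest exponent such that $Q_n(Y_j)^{a_{n,j}}$ divides $f$ in $\calR^{[s;+\infty[}(Y)$; this is finite, since by corollary \ref{iotaninj} and theorem \ref{embdr} it is bounded by the $t$-adic valuation of the nonzero element $\iota_{hn+j}(f) \in \bdr^+$. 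Proposition \ref{weierdivcirc}, applied on each closed polyannulus $\calR^{[s';s']}(Y)$ and glued using uniqueness, produces a factorization $f = P \cdot g$ with $P := \prod_{n,j}(Q_n(Y_j)/p)^{a_{n,j}}$ and $g \in \calR^{[s;+\infty[}(Y)$. The product defining $P$ converges in $\calR^{[s;+\infty[}(Y)$: since $Q_1(0) = p$ and $Q_{n+1} = Q_n \circ [p]$, one has $Q_n(0) = p$ for all $n$, so $Q_n(Y_j)/p \to 1$ in each $\calR^{[s;s']}(Y)$ as $n \to \infty$ at a super-geometric rate, controlling the infinite product. By construction $(g)$ remains $\Gamma_F$-stable and no $Q_n(Y_j)$ divides $g$.

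It now suffices to show that $g$ is a unit of each closed-polyannulus ring $\calR^{[s;s']}(Y)$: then $g$ is a unit of $\calR^{[s;+\infty[}(Y) = \bigcap_{s' \geq s}\calR^{[s;s']}(Y)$ since the inverses glue. Equivalently, we must show the vanishing locus of $g$ is empty on each closed polyannulus. As the zero set of a single element in the Noetherian affinoid $\calR^{[s;s']}(Y)$, this vanishing locus has finitely many irreducible components, so $\Gamma_F$ permutes them in finite orbits; each component $V$ is thus stabilized by an open subgroup of $\Gamma_F$. The key claim is that any such irreducible hypersurface $V$ must be of the form $\{Y_j = z\}$ for some $0 \leq j \leq h-1$ and some root $z$ of some $Q_n$. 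The infinitesimal action of $\mathrm{Lie}(\Gamma_F) = F$ on the polyannulus is, by lemma \ref{lielt}, given by the derivation $D_\epsilon := \sum_{j=0}^{h-1} \sigma^j(\epsilon) \, c_j(Y) \, \partial/\partial Y_j$ with $c_j(Y) := \log_{\LT}(Y_j)(dS/dU)(Y_j,0)$. Since $V$ is preserved by an open subgroup, this derivation preserves the ideal $(f_V)$, so $D_\epsilon f_V$ vanishes on $V$ for every $\epsilon \in F$. Applying Artin's theorem on the linear independence of $\sigma^0,\ldots,\sigma^{h-1}$ (precisely as in the proof of proposition \ref{injeval}), one obtains $c_j \cdot (\partial f_V/\partial Y_j)|_V = 0$ for each $j$; by characteristic-zero generic smoothness some $\partial f_V/\partial Y_j$ does not vanish identically on $V$, forcing $c_j|_V \equiv 0$, i.e., $V \subseteq V(\log_{\LT}(Y_j)) = \bigsqcup_n V(Q_n(Y_j))$. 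Irreducibility and dimension then give $V = \{Y_j = z\}$ with $Q_n(z) = 0$ for some $n \geq n(s)$.

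Combining, every irreducible component of the vanishing locus of $g$ is $\{Y_j = z\}$ for some torsion $z$; by the $\Gamma_F$-stability of the divisor of $g$, the full $\Gamma_F$-orbit of each component, namely $V(Q_n(Y_j))$, lies in this locus with positive multiplicity, forcing $Q_n(Y_j) \mid g$ contrary to the construction. Therefore $g$ has no zeros, hence is a unit, and $f = g \cdot \prod_{n,j}(Q_n(Y_j)/p)^{a_{n,j}}$ has the required form. The main obstacle is the classification of $\Gamma_F$-stable irreducible hypersurfaces, resting on the combination of the Lie-theoretic infinitesimal action with Artin's independence theorem; a subsidiary technical point is the convergence of the (possibly infinite) product $P$.
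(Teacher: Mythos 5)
Your argument is correct in substance, and its engine is the same as the paper's: the infinitesimal action of $\Gamma_F$ (lemmas \ref{lielt} and \ref{qpanry}) combined with the independence of the embeddings $\sigma^0,\hdots,\sigma^{h-1}$, exactly as in proposition \ref{injeval}, to show that a $\Gamma_F$-stable zero locus can only be built from the divisors of the $Q_n(Y_j)$, followed by Weierstrass division. The organization, however, is genuinely different. The paper fixes a closed subinterval $I=[s;u]$, takes a zero $z$ of $f$, specializes the torsion coordinates of $z$, and runs the proposition \ref{injeval} argument at the resulting point (all of whose coordinates are non-torsion) to conclude that the specialized series is identically zero; hence $Z_I(f)$ is a finite union of slices with exactly one coordinate pinned at a torsion point, $f$ is a unit times finitely many $Q_n(Y_j)$ on each $[s;u]$, and the infinite product only appears at the very end by letting $u \to \infty$. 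You instead extract the full (possibly infinite) product $P$ up front and classify the $\Gamma_F$-stable irreducible components of $Z(g)$ by a derivation-preserves-the-ideal plus generic-smoothness argument. What the paper's pointwise specialization buys is that it avoids two points you use implicitly: that a component $V$ is cut out by a single equation $f_V$ (principality of height-one primes in these affinoids is not obvious; this is repaired by working in the local ring at a regular point of $V$, where $V$ is locally defined by one equation with nonvanishing differential, and then using irreducibility of $V$ to get $c_j|_V\equiv 0$). What your order of operations buys is a clean contradiction at the end, but it creates two small obligations: the convergence of $P$ is not really a matter of the ``super-geometric rate'', since the exponents $a_{n,j}$ need not be bounded --- what saves you is the ultrametric bound $|(1+\epsilon)^a-1|\leq|\epsilon|$, which makes $\prod_{n,j}(Q_n(Y_j)/p)^{a_{n,j}}$ converge regardless of the exponents; and forming $g=f/P$ requires simultaneous divisibility by the various $Q_n(Y_j)^{a_{n,j}}$ (extract them successively, checking via the slice argument that divisibility by $Q_{n'}(Y_{j'})$ survives division by $Q_n(Y_j)$). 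A further shortcut available to you: over $F$ the subvariety $V(Q_n(Y_j))$ is already irreducible (its ring is a polyannulus algebra over the field $F[Y_j]/Q_n$), so the components of $Z(g)$ over $F$ are exactly these divisors and your final Galois-orbit step is unnecessary. None of these is a genuine gap; each needs only a line of justification.
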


\begin{proof}
Recall that a power series $f \in \calR^I(Y)$ is a unit if and only if it has no zero in the corresponding domain of convergence (by the nullstellensatz, see \S 7.1.2 of \cite{BGR}).

Let $I=[s;u]$ be a closed subinterval of $[s;+\infty[$, so that $f \in \calR^I(Y)$, and let $z=(z_0,z_1,\hdots,z_{h-1})$ be a point such that $f(z)=0$. Let $J$ be the set of indices $j$ such that $z_j$ is not a torsion point of $\LT_h$ and let $f_J \in \calR^I_{F_k}(\{Y_j\}_{j \in J})$ be the power series that results from evaluation of the $Y_m$ at $z_m$ for all the $z_m$ that are torsion points of $\LT_h$ (here $k$ is large enough so that all those $z_m$ belong to $F_k$). The ideal of $\calR^I_{F_k}(\{Y_j\}_{j \in J})$ generated by the power series $f_J$ is stable under $1+p^k \OO_F$, so that the set of its zeroes is stable under the action of $1+p^k \OO_F$. Furthermore, $f_J$ has a zero none of whose coordinates are torsion points of $\LT_h$. The same argument as in the proof of proposition \ref{injeval} shows that $f_J=0$.

If we denote by $Z_I(f)$ the zero set of $f \in \calR^I(Y)$, then the preceding argument shows that $Z_I(f)$ is the union of finitely many components of the form $Z_0 \times \cdots Z_{h-1}$ where for each $j$, either $Z_j$ is a torsion point of $\LT_h$ or $Z_j=Z_I(\{0\})$. For reasons of dimension, each of these components has precisely one $Z_j$ which is a torsion point, the remaining $h-1$ being $Z_I(\{0\})$. This implies that in $\calR^I(Y)$, $f$ is the product of finitely many $Q_n(Y_j)$ by a unit. 

The proposition now follows by a standard infinite factorisation argument, by writing $[s;+\infty[ = \cup_{u\geq s} [s;u]$.
\end{proof}

\begin{coro}\label{rstdm}
If $\mfont$ is a crystalline $(\phi_q,\Gamma_F)$-module over $\calR(Y)$, then the map
\[ \calR(Y)[1/t] \otimes_F \dcris(\mfont) \to \calR(Y)[1/t] \otimes_{\calR(Y)} \mfont \]
is an isomorphism.
\end{coro}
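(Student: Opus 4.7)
The plan is to show that both sides are free $\calR(Y)[1/t]$-modules of rank $d := \dim_F \dcris(\mfont)$, that the map is injective, and finally to identify the determinant of the comparison map as a unit by combining $\Gamma_F$-equivariance with Proposition \ref{idealgam}. Freeness of the source is tautological. For the target, observe that by Proposition \ref{imgt} the element $t$ is a nonzero $\Qp$-multiple of $\prod_j \log_{\LT}(Y_j) = Y_0 \cdots Y_{h-1} \cdot \lambda$, and since the $Y_j$ are already units of $\calR(Y)$, inverting $t$ inverts $\lambda$ and hence every $f_j = \lambda/\lambda_j$; by condition (1) of Definition \ref{defpgcr}, $\mfont^{[s;+\infty[}[1/f_j]$ is free of rank $d$ over $\calR^{[s;+\infty[}(Y)[1/f_j]$, and further inverting $\lambda_j$ shows that $\mfont^{[s;+\infty[}[1/t]$ is free of rank $d$ over $\calR^{[s;+\infty[}(Y)[1/t]$. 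Injectivity follows from the standard argument used in Corollary \ref{maxdiminv} via Lemma \ref{invfro}. Fixing bases on both sides, let $A \in M_d(\calR(Y)[1/t])$ be the matrix of the map; the map is then an isomorphism if and only if $\det A$ is a unit.

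For $g \in \Gamma_F$, let $G_g \in \GL_d(\calR(Y)[1/t])$ be the matrix of $g$ acting on the target basis (it is invertible because $g$ is an automorphism of $\mfont$). Since the source basis is $\Gamma_F$-fixed and the comparison map is equivariant, I get $A = g(A) \cdot G_g$, hence $\det A = g(\det A) \cdot \det G_g$ with $\det G_g$ a unit. Thus the principal ideal $(\det A)$ of $\calR(Y)[1/t]$ is $\Gamma_F$-stable. Choose $s$ large enough so that $\det A \in \calR^{[s;+\infty[}(Y)[1/t]$ and an integer $k \geq 0$ with $B := t^k \det A \in \calR^{[s;+\infty[}(Y)$. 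Because $g(t) = \chi_{\cyc}(g) \cdot t$, the ideal $(t^k)$ is also $\Gamma_F$-stable, and hence so is $(B)$ inside $\calR^{[s;+\infty[}(Y)$.

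Applying Proposition \ref{idealgam} I would then obtain a factorisation $B = u \cdot \prod_{j, n \geq n(s)} (Q_n(Y_j)/p)^{a_{n,j}}$ with $u$ a unit and $a_{n,j} \in \ZZ_{\geq 0}$. To conclude, I would verify that each $Q_n(Y_j)$ with $n \geq 1$ is already a unit in $\calR^{[s;+\infty[}(Y)[1/t]$: indeed, from $\log_{\LT}(Y_j) = Y_j \prod_{k \geq 1} Q_k(Y_j)/p$ (a convergent factorisation in $\calR^+(Y_j)$) one sees that $t/(Q_n(Y_j)/p)$ is holomorphic on the open unit polydisk, hence lies in $\calR^{[s;+\infty[}(Y)$, and therefore $1/Q_n(Y_j) \in \calR^{[s;+\infty[}(Y)[1/t]$. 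So $B$ is a unit in $\calR^{[s;+\infty[}(Y)[1/t]$, hence $\det A$ is a unit in $\calR(Y)[1/t]$, and the map is an isomorphism. The main subtlety lies in this last step: Proposition \ref{idealgam} delivers only an \emph{a priori} infinite product of the $Q_n(Y_j)$'s, so one must be careful that inverting $t$ genuinely inverts each factor individually, not merely the product; the explicit expression of $t$ as a convergent product of these $Q_n$'s up to units is what makes this work.
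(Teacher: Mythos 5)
Your proposal follows essentially the same route as the paper's proof: injectivity via Lemma \ref{invfro}, $\Gamma_F$-stability of the determinant ideal, and Proposition \ref{idealgam} combined with the observation that each $Q_n(Y_j)/p$ divides $t$ up to a unit; you simply make explicit what the paper leaves implicit (freeness of rank $d$ of $\mfont[1/t]$ via condition (1) of Definition \ref{defpgcr}, and clearing $t$-denominators so that Proposition \ref{idealgam} genuinely applies to an ideal of $\calR^{[s;+\infty[}(Y)$). The one caveat: the subtlety you flag at the end is not actually closed by your remark, since Proposition \ref{idealgam} permits infinitely many factors with a priori unbounded exponents $a_{n,j}$, and an infinite product of units of $\calR^{[s;+\infty[}(Y)[1/t]$ need not be a unit unless the exponents are bounded (so that $B$ divides a fixed power of $t$); the paper's own two-line proof is equally silent on this point, so your write-up is faithful to, and in fact more detailed than, the argument given there.
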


\begin{proof}
The map is injective by lemma \ref{invfro}, and its determinant generates an ideal of $\calR(Y)[1/t]$ that is stable under $\Gamma_F$. Proposition \ref{idealgam} implies that this ideal is the unit ideal of $\calR(Y)[1/t]$, and therefore that the map is an isomorphism.
\end{proof}

We now consider filtrations on $\dcris(\mfont)$.

\begin{lemm}\label{misgam}
Let $D$ be an $F$-vector space with a trivial action of $G_F$, and let $W$ be a $\bdr^+$-lattice of $\bdr \otimes_F D$ that is stable under $G_F$. If we set $\Fil^i D =  D \cap t^i \cdot W$, then $W = \Fil^0 (\bdr \otimes_F D)$.
\end{lemm}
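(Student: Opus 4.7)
The inclusion $\Fil^0(\bdr \otimes_F D) \subseteq W$ is immediate from the definition $\Fil^i D = D \cap t^i W$, which forces $t^{-i}\bdr^+ \cdot \Fil^i D \subseteq W$ for every $i$. For the reverse inclusion I would argue by induction on $d = \dim_F D$. The base case $d = 1$ is direct: any $\bdr^+$-lattice of $\bdr \cdot e$ has the form $t^{-n}\bdr^+ \cdot e$ (automatically $G_F$-stable since $g(e) = e$ and $g(t) = \chi_{\cyc}(g) \cdot t$), and its induced filtration recovers it.

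For the inductive step, let $n$ be the largest integer with $\Fil^n D \neq 0$ (finite, since $W$ is $t$-adically separated and $D \subset t^{-N} W$ for $N \gg 0$), choose $0 \neq e \in \Fil^n D$, and set $e' := t^{-n} e \in W$. Then $e'$ is primitive in $W$ (otherwise $t^{-1} e' = t^{-n-1} e \in W$ would force $e \in \Fil^{n+1} D = 0$), and $g(e') = \chi_{\cyc}(g)^{-n} e'$ with $\chi_{\cyc}(g)^{-n} \in \Zp^\times$ makes $\bdr^+ \cdot e'$ a $G_F$-stable direct summand of $W$. Consequently $W' := W / \bdr^+ e'$ is a $G_F$-stable $\bdr^+$-lattice in $\bdr \otimes_F (D/Fe)$, and the inductive hypothesis gives $W' = \sum_i t^{-i}\bdr^+ \cdot \bar\Fil^i$ with $\bar\Fil^i := (D/Fe) \cap t^i W'$. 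The heart of the argument is then to show that $\pi : \Fil^i D \to \bar\Fil^i$ is surjective for every $i$. Given $\bar x \in \bar\Fil^i$, lift arbitrarily to $x \in D$; then $x = y + \alpha e$ for some $y \in t^i W$ and $\alpha \in \bdr$, and $G_F$-invariance of $x$ and $t^i W$ combined with the identity $\bdr \cdot e \cap t^i W = t^{i-n}\bdr^+ \cdot e$ (which follows from primitivity of $e'$) forces $g(\alpha) - \alpha \in t^{i-n}\bdr^+$, so that $\alpha$ defines a class in $(\bdr/t^{i-n}\bdr^+)^{G_F}$.

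The main obstacle is the cohomological computation of this invariant group. Filtering $\bdr/t^m\bdr^+$ by powers of $t$, the graded pieces are copies of $\Cp(j)$, and Tate's vanishing $H^0(G_F, \Cp(j)) = 0$ for $j \neq 0$ together with $H^0(G_F, \Cp) = F$, combined with a direct-limit argument, yields $(\bdr/t^m\bdr^+)^{G_F} = F$ for $m \geq 1$ and $= 0$ for $m \leq 0$. Picking $\beta \in F$ representing the class of $\alpha$ (with $\beta = 0$ when $i \leq n$) produces $\alpha - \beta \in t^{i-n}\bdr^+$, hence $(\alpha - \beta)e \in t^i W$, so $x - \beta e \in \Fil^i D$ is the desired lift of $\bar x$. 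With this surjectivity in hand, choosing lifts in $\Fil^i D$ of generators of each $\bar\Fil^i$ assembles to give $W = \bdr^+ e' + \sum_i t^{-i}\bdr^+ \cdot \Fil^i D$; since $\bdr^+ e' = t^{-n}\bdr^+ \cdot e \subseteq t^{-n}\bdr^+ \cdot \Fil^n D$, the first term is absorbed into the sum, yielding $W \subseteq \Fil^0(\bdr \otimes_F D)$ and closing the induction.
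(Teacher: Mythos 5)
Your proof is correct, but it follows a genuinely different route from the paper. The paper fixes a basis of $D$ adapted to the filtration, writes an arbitrary $w \in W$ in terms of $t^{-h_i}e_i$, and runs a minimal-length relation argument: applying the operators $1-\chi_{\cycl}(g)^{h_i}g$ to a shortest offending relation and invoking $H^0(G_F,\Cp)=F$, $H^0(G_F,\Cp(j))=0$ for $j\neq 0$ forces the relation to degenerate into an $F$-linear combination lying in $\Fil^{h_i+1}D$, a contradiction with adaptedness. You instead induct on $\dim_F D$: you split off the saturated rank-one piece $\bdr^+e'=t^{-n}\bdr^+e$ attached to the deepest filtration jump (note the direct-summand property comes from primitivity of $e'$ in the free $\bdr^+$-module $W$ over the discrete valuation ring $\bdr^+$, not from the $G_F$-equivariance you cite — but all you actually use is $\bdr e\cap W=\bdr^+e'$, which you correctly derive from primitivity), pass to $W'=W/\bdr^+e'$ inside $\bdr\otimes_F(D/Fe)$, and reduce everything to the strictness statement that $\Fil^iD\to (D/Fe)\cap t^iW'$ is surjective, proved via $(\bdr/t^{m}\bdr^+)^{G_F}=F$ or $0$ according to the sign of $m$. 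The cohomological input is identical in both arguments (the Tate--Sen computation of $H^0(G_F,\Cp(j))$); the difference is structural. Your dévissage isolates a reusable strictness lemma and avoids the minimality-of-relations bookkeeping, at the cost of the quotient-lattice verifications (freeness of $W$ over the DVR $\bdr^+$, saturation of $\bdr^+e'$, triviality of the induced action on $D/Fe$); the paper's argument gets the inclusion $W\subset\Fil^0(\bdr\otimes_F D)$ in a single pass with no quotient construction.
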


\begin{proof}
Let $e_1,\hdots,e_d$ be a basis of $D$ adapted to its filtration, with $e_i \in \Fil^{h_i} \setminus \Fil^{h_i+1} D$. We then have $\Fil^0 (\bdr \otimes_F D) = \oplus_{i=1}^d \bdr^+ \cdot t^{-h_i} e_i$. By definition, we have $t^{-h_i} e_i \in W$, so that $\Fil^0 (\bdr \otimes_F D) \subset W$. We now prove the reverse inclusion. 

If $w \in W$, then we can write $w=a_1 t^{-h_1} e_1 + \cdots + a_d t^{-h_d} e_d$ with $a_i \in \bdr$ and we need to prove that $a_i \in \bdr^+$ for all $i$. If this is not the case, then there exists $n \geq 1$ such that if we set $b_i = t^n a_i$, then we have $b_1 t^{-h_1} e_1 + \cdots + b_d t^{-h_d} e_d \in t \cdot W$, with $b_i \in (\bdr^+)^\times$ for at least one $i$. Consider the shortest such relation; in particular, $b_i \in (\bdr^+)^\times$ for all $i$ for which $b_i \neq 0$, and we can assume that $b_i=1$ for at least one $i$. If $g \in G_F$, then applying $1-\chi_{\cycl}(g)^{h_i} g$ to the relation yields a shorter relation. This implies that $(1-\chi_{\cycl}(g)^{h_i-h_j} g)(b_j) \in t \bdr^+$ for all $g \in G_F$ and all $1 \leq j \leq d$. Since $H^0(G_F,\Cp)=F$ and $H^0(G_F,\Cp(h))=\{0\}$ if $h \neq 0$, we have $b_j \in F + t\bdr^+$ if $h_i=h_j$ and $b_j \in t\bdr^+$ otherwise. The relation above therefore reduces to an $F$-linear combination of those $e_j$ for which $h_j=h_i$, belonging to $D \cap t^{h_i+1} W = \Fil^{h_i+1} D$, and is hence trivial. This proves that $W \subset \Fil^0 (\bdr \otimes_F D)$.
\end{proof}

\begin{defi}
\label{filmf}
Let $\mfont$ be a crystalline $(\phi_q,\Gamma_F)$-module over $\calR(Y)$, which we can write as $\mfont = \calR(Y) \otimes_{\calR^{[s;+\infty[}(Y)} \mfont^{[s;+\infty[}$ for some $s$ large enough. For $m \gg 0$ and $j=0,\hdots,h-1$ and $n=hm-j$, define 
\[ \Fil^i_j(\dcris(\mfont)) = \dcris(\mfont) \cap t^i \cdot (\bdr^+ \otimes_{\calR^{[s;+\infty[}(Y)}^{\phi^{-n}} \mfont^{[s;+\infty[}). \]
\end{defi}

\begin{prop}
\label{filok}
The definition of $\Fil^i_j(\dcris(\mfont))$ does not depend on $m \gg 0$, and we have $\Fil^0(\bdr \otimes^{\sigma^{-n}}_F \dcris(\mfont)) = \bdr^+ \otimes^{\phi^{-n}}_{\calR^{[s;+\infty[}(Y)} \mfont^{[s;+\infty[}$.
\end{prop}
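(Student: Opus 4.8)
The plan is to reduce both assertions to lemma \ref{misgam}, applied to $D=\dcris(\mfont)$ together with a suitable $\bdr^+$-lattice. Fix $s$ large enough that $\iota_n$ is defined on $\calR^{[s;+\infty[}(Y)$ (that is, $s_n\geq s$) for all the $n=hm-j$ under consideration, that $\dcris(\mfont)$ lies in $\calR^{[s;+\infty[}(Y)[1/t]\otimes_{\calR^{[s;+\infty[}(Y)}\mfont^{[s;+\infty[}$, and that the natural map $\calR^{[s;+\infty[}(Y)[1/t]\otimes_F\dcris(\mfont)\to\mfont^{[s;+\infty[}[1/t]$ is an isomorphism --- this is possible by corollary \ref{rstdm}, since $\mfont^{[s;+\infty[}[1/t]$ is free of rank $d$ by conditions (1)--(2) of definition \ref{defpgcr} and the determinant of this map, a unit of $\calR(Y)[1/t]$, becomes a unit of $\calR^{[s;+\infty[}(Y)[1/t]$ once $s$ is large. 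Write $W_n=\bdr^+\otimes^{\phi^{-n}}_{\calR^{[s;+\infty[}(Y)}\mfont^{[s;+\infty[}$; since $\iota_n$ factors through restriction to the circle $[s_n;s_n]$, one has $W_n=\bdr^+\otimes_{\iota_n}\mfont^{[s_n;s_n]}$, where $\mfont^{[s_n;s_n]}=\calR^{[s_n;s_n]}(Y)\otimes_{\calR^{[s;+\infty[}(Y)}\mfont^{[s;+\infty[}$. Applying $\iota_n$ to the isomorphism above (legitimate because $\iota_n(t)\in\bdr^\times$) yields a canonical isomorphism $\bdr\otimes_{\iota_n}\mfont^{[s_n;s_n]}\simeq\bdr\otimes^{\sigma^{-n}}_F\dcris(\mfont)$, compatible with the natural map from $\dcris(\mfont)$, so that the latter becomes the obvious inclusion $\dcris(\mfont)\hookrightarrow\bdr\otimes^{\sigma^{-n}}_F\dcris(\mfont)$; under this isomorphism $G_F$ acts on the target through the $\bdr$-factor only, trivially on $\dcris(\mfont)$, because $\dcris(\mfont)$ consists of $\Gamma_F$-invariants and $g\circ\iota_n=\iota_n\circ g$ for $g\in G_F$. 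Thus $W_n$ is a $G_F$-stable $\bdr^+$-submodule of $\bdr\otimes^{\sigma^{-n}}_F\dcris(\mfont)$; it is in fact free of rank $d$ over $\bdr^+$, because for a suitable index $i$ the element $\iota_n(f_i)$ is a unit of $\bdr^+$ (as noted right after the definition of $f_j$), whence $W_n=\bdr^+\otimes_{\iota_n}\mfont^{[s_n;s_n]}[1/f_i]$ is free of rank $d$ by condition (1) of definition \ref{defpgcr}.

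To see that $W_n$ depends only on $n\bmod h$, I would compare the levels $n$ and $n+h$. As $q=p^h$ one has $s_{n+h}=q\,s_n$, the Frobenius induces $\phi_q:\calR^{[s_n;s_n]}(Y)\to\calR^{[s_{n+h};s_{n+h}]}(Y)$, and $\phi_q^*(\mfont^{[s_n;s_n]})=\mfont^{[s_{n+h};s_{n+h}]}$ because $\phi_q^*(\mfont^{[s;+\infty[})=\mfont^{[qs;+\infty[}$. Combined with the identity $\iota_{n+h}\circ\phi_q=\iota_n$ (obtained by iterating $\iota_{k+1}\circ\phi=\iota_k$), this gives $W_{n+h}=W_n$ as $\bdr^+$-submodules of $\bdr\otimes^{\sigma^{-n}}_F\dcris(\mfont)=\bdr\otimes^{\sigma^{-(n+h)}}_F\dcris(\mfont)$. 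Therefore $\Fil^i_j(\dcris(\mfont))=\dcris(\mfont)\cap t^iW_n$, as defined in definition \ref{filmf}, does not depend on $m\gg0$, which is the first assertion; in particular $\Fil^\bullet_j$ is an exhaustive, separated, decreasing filtration of $\dcris(\mfont)$ by $F$-subspaces.

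Finally, I would apply lemma \ref{misgam} with $D=\dcris(\mfont)$ equipped with its trivial $G_F$-action and $W=W_n$, using the embedding $\sigma^{-n}:F\to\bdr$ in place of the standard one: by construction $D\cap t^iW=\Fil^i_j(\dcris(\mfont))$, so the lemma gives $W_n=\Fil^0(\bdr\otimes^{\sigma^{-n}}_F\dcris(\mfont))$, which is the second assertion. The main obstacle is the first paragraph: pinning down the canonical, $G_F$-equivariant identification of the localizations $\bdr\otimes_{\iota_n}\mfont^{[s_n;s_n]}$ for all $n$ in a fixed residue class modulo $h$, arranging $s$ so that the base-change identifications hold, and checking that $W_n$ is free of rank $d$. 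Once this bookkeeping is done, the two assertions follow formally from lemma \ref{misgam} and the Frobenius compatibility $\iota_{n+h}\circ\phi_q=\iota_n$.
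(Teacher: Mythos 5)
Your proposal is correct and follows essentially the same route as the paper: the independence of $m$ comes from $\phi_q^*(\mfont^{[s;+\infty[})=\mfont^{[qs;+\infty[}$ together with $\iota_{n+h}\circ\phi_q=\iota_n$, and the second assertion is lemma \ref{misgam} applied to $W=\bdr^+\otimes^{\phi^{-n}}_{\calR^{[s;+\infty[}(Y)}\mfont^{[s;+\infty[}$. The only difference is that you spell out the verification, left implicit in the paper, that this $W$ is a $G_F$-stable free $\bdr^+$-lattice in $\bdr\otimes^{\sigma^{-n}}_F\dcris(\mfont)$ (via corollary \ref{rstdm} and the unit $\iota_n(f_i)$), which is a worthwhile addition rather than a deviation.
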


\begin{proof}
If $s$ is large enough, then $\mfont^{[qs;+\infty[} = \phi_q^*(\mfont^{[s;+\infty[})$ so that 
\[Ê\bdr^+ \otimes^{\phi^{-n-h}}_{\calR^{[qs;+\infty[}(Y)} \mfont^{[qs;+\infty[} 
= \bdr^+ \otimes^{\phi^{-n}\phi_q^{-1}}_{\calR^{[qs;+\infty[}(Y)} \phi_q(\mfont^{[s;+\infty[})
= \bdr^+ \otimes^{\phi^{-n}}_{\calR^{[s;+\infty[}(Y)} \mfont^{[s;+\infty[}, \]
which implies the first statement. The second statement follows from lemma \ref{misgam}, applied to $W=\bdr^+ \otimes^{\phi^{-n}}_{\calR^{[s;+\infty[}(Y)} \mfont^{[s;+\infty[}$. 
\end{proof}

\begin{theo}
\label{crysequiv}
The functors $\mfont \mapsto \dcris(\mfont)$ and $D \mapsto \mfont(D)$, between the category of crystalline $(\phi_q,\Gamma_F)$-modules over $\calR(Y)$ and the category of $\phi_q$-modules with $h$ filtrations, are mutually inverse.
\end{theo}

\begin{proof}
If $D$ is a $\phi_q$-module with $h$ filtrations, then it is clear that $\dcris(\mfont(D))=D$ as $\phi_q$-modules. The fact that $\Fil^i_j(D) = D \cap t^i \cdot \Fil_j^0 (\bdr \otimes_F^{\sigma^{-n}} D)$ follows from taking a basis of $D$ adapted to $\Fil^\bullet_j$ and 
\[ \Fil_j^0 (\bdr \otimes_F^{\sigma^{-n}} D) = \bdr^+ \otimes^{\phi^{-n}}_{\calR^{[s;+\infty[}(Y)} \mfont^{[s;+\infty[}(D) = \Fil_j^0 (\bdr \otimes_F^{\sigma^{-n}} \dcris(\mfont(D))) \]
by propositions \ref{surjlatt} and \ref{filok}, so that the filtrations on $D$ and $\dcris(\mfont)$ are the same.

We now check that if $\mfont$ is a crystalline $(\phi_q,\Gamma_F)$-module over $\calR(Y)$ and $D=\dcris(\mfont)$ with the filtration given in definition \ref{filmf}, then $\mfont=\mfont(D)$. Corollary \ref{rstdm} says that we have $\calR(Y)[1/t] \otimes_F D =  \calR(Y)[1/t] \otimes_{\calR(Y)} \mfont$. The theorem now follows from proposition \ref{filok} and the fact that if $y \in \calR(Y)[1/t] \otimes_{\calR(Y)} \mfont$, then $y \in \mfont$ if and only if $y \in \bdr^+ \otimes^{\phi^{-n}}_{\calR^{[s;+\infty[}(Y)} \mfont$ for all $n \gg 0$ by theorem \ref{embdr} and items (1) and (2) of definition \ref{defpgcr}.
\end{proof}

\section*{Acknowledgements} 
I am grateful to P.\ Colmez, J.-M.\ Fontaine, L.\  Fourquaux, M.\ Gros, K.\  Kedlaya, R.\  Liu, V.\ Pilloni, S.\ Rozensztajn, P.\ Schneider, B.\ Stroh, B.\  Xie, S.\ Zerbes and especially C.\ Breuil for helpful conversations and remarks. Special thanks to P.\ Schneider and the referee for pointing out several embarassing mistakes in a previous version of this paper and helping me to correct them.

\bibliographystyle{smfalpha}
\bibliography{PGMLT_MRLv2}
\end{document}